\documentclass[11pt]{article}
\usepackage{amsmath}
\usepackage{pifont}
\usepackage{amssymb}
\usepackage{amsfonts}
\usepackage{amscd}
\usepackage{epsfig}
\usepackage{amsmath,amstext,amsthm,amsbsy,amssymb}
\usepackage{graphicx}
\usepackage{epsfig}

\setlength{\oddsidemargin}{-0.32in} \setlength{\textwidth}{6.98in}
\setlength{\textheight}{9.08 in} \setlength{\topmargin}{-.5in}

\newcommand{\bc}{{\mathbb C}}
\newcommand{\br}{{\mathbb R}}
\newcommand{\bh}{{\mathbb H}}
\newcommand{\ba}{{\mathbb A}}

\newcommand{\bm}{{\mathbb M}}
\newcommand{\bp}{{\mathbb P}}
\newcommand{\F}{{\mathbb F}}

\newcommand{\bi}{{\bf i}}
\newcommand{\bj}{{\bf j}}

\newcommand{\p}{{\bf p}}
\newcommand{\q}{{\bf q}}
\newcommand{\e}{{\bf e}}
\newcommand{\z}{{\bf z}}
\newcommand{\bu}{{\bf u}}

\newcommand{\w}{{\bf w}}

\newcommand{\pspn}{{\rm PSp}(n,1)}
\newcommand{\pspt}{{\rm PSp}(2,1)}
\newcommand{\spn}{{\rm Sp}(n,1)}
\newcommand{\spt}{{\rm Sp}(2,1)}

\newcommand{\hq}{{\bf H}_{\bh}^n}

\newcommand{\bhq}{\partial {\bf H}_{\bh}^n}
\newcommand{\bhqt}{\partial {\bf H}_{\bh}^2}

\newcommand{\bhct}{\partial{\bf H}_{\bc}^2}

\newtheorem{thm}{Theorem}[section]
\newtheorem{lem}{Lemma}[section]
\newtheorem{prop}{Proposition}[section]

\newtheorem{dfn}{Definition}[section]
\newtheorem{rem}{Remark}[section]
\newtheorem{example}{Example}[section]

\newcommand{\fpm}{\fp=(p_1,\cdots,p_m)}

\newcommand{\fp}{{\mathfrak{p}}}


\begin{document}

\title{The moduli space of  points in  quaternionic projective space}
\author{ Wensheng Cao \\
School of Mathematics and Computational Science,\\
Wuyi University, Jiangmen, Guangdong 529020, P.R. China\\
e-mail: {\tt wenscao@aliyun.com}\\
}
\date{}
\maketitle

\bigskip
{\bf Abstract}\ \  Let $\mathcal{M}(n,m;\F \bp^n)$ be the configuration space of  $m$-tuples of pairwise distinct points in  $\F \bp^n$, that is, the quotient of the set of $m$-tuples of pairwise distinct points in  $\F \bp^n$  with respect to the diagonal action of  ${\rm PU}(1,n;\F)$ equipped with the quotient topology.  It is an important problem in hyperbolic geometry  to parameterize  $\mathcal{M}(n,m;\F \bp^n)$ and study  the geometric and topological  structures on the associated parameter space.
In this paper, by mainly using  the rotation-normalized  and block-normalized algorithms,  we construct  the parameter spaces   of   both $\mathcal{M}(n,m; \bhq)$ and $\mathcal{M}(n,m;\bp(V_+))$, respectively.

\bigskip

{\bf Mathematics Subject Classifications (2000)}\ \  57M50, 53C17, 32M15, 32H20.

\medskip
{\bf Keywords}\ \   Quaternionic hyperbolic space; Gram matrix;  Moduli space.

\section{Introduction}\label{se1-intr}
Let $\F=\br,\bc$ or $\bh$  be  respectively the real numbers, the complex numbers  or the quaternions, and
$\langle\z,\,\w\rangle=\w^*J\z$  a Hermitian product in $(n+1)$-dimensional $\F$-vector space  $\F^{n,1}$ of signature $(n,1)$, where $\z=(z_1,\cdots,z_{n+1})^T$,  $\w=(w_1,\cdots,w_{n+1})^T$  and   $\cdot^*$ denotes the conjugate transpose.  The group of transformations of $\F^{n+1}$ that preserve this Hermitian product is the noncompact Lie group $U(1,n;\F)$, that is, $$U(1,n;\F)=\{g\in {\rm GL}(n+1,\F):g^*Jg=J\}.$$
 These groups are traditionally  denoted by ${\rm O}(n,1)={\rm U}(1,n;\br)$, ${\rm U}(n,1)={\rm U}(1,n;\bc)$  and ${\rm Sp}(n,1)={\rm  U}(1,n;\bh)$.
Denote  by $\bp$  the  natural  right  projection from $\F^{n,1} -\{0\}$ to projective space $\F \bp^n$.   Let $V_-,V_0,V_+$ be the subsets of   $\F^{n,1} -\{0\}$  consisting
of vectors where  $\langle \z,\z\rangle$ is negative, zero, or positive, respectively.
Their projections to $\F \bp^n$ are called isotropic, negative, and positive points, respectively.  Conventionally, we  denote  ${\bf H}_\F^n=\bp(V_-),\partial{\bf H}_\F^n=\bp(V_0)$ and $\overline{{\bf H}_\F^n}={\bf H}_\F^n\cup \partial{\bf H}_\F^n$.
The Bergman metric on ${\bf H}_\bh^n$ is given by the distance formula
\begin{equation}\label{disformula}
\cosh^2\frac{\rho(z,w)}{2}=\frac{\langle\z,\,{\bf
		w}\rangle \langle\w,\,\z\rangle}{\langle\z,\,{\bf
		z}\rangle \langle\w,\,\w\rangle},\ \ \mbox{where}\ \ {\bf
	z}\in \bp^{-1}(z),\w\in \bp^{-1}(w).
\end{equation}
 The center ${\rm Z}(1,n;\F) $ in  ${\rm U}(1,n;\F)$  is  $\{\pm I_{n+1}\}$ if $\F=\br,\bh$, and is the circle group $\{e^{{\bf i}\theta} I_{n+1}\}$ if $\F=\bc$.  We mention that  $g\in U(1,n;\F)$  acts on  $\F \bp^n$ as $g(z)=\bp g \bp^{-1}(z)$. Therefore the holomorphic isometry group ${\rm Isom}({\bf H}_\F^n)$ of  ${\bf H}_\F^n$ is actually the quotient  ${\rm P U}(1,n;\F)={\rm  U}(1,n;\F)/{\rm Z}(1,n;\F)$.  We refer to \cite{apakim07,caogd16,chegre74,gol99,kimpar03} for further details.

 Let $\mathcal{M}(n,m;\F\bp^n)$ be the configuration space of  $m$-tuples of pairwise distinct points in  $\F \bp^n$, or equivalently, the quotient of the set of $m$-tuples of pairwise distinct points in  $\F \bp^n$  with respect to the diagonal action of  ${\rm PU}(1,n;\F)$ equipped with the quotient topology.
 It is an important problem in hyperbolic geometry  to parameterize the space $\mathcal{M}(n,m;\F \bp^n)$ and study the geometric and topological structures on the associated parameter space. We refer to such  a problem   the moduli problem on $\F \bp^n$.

The moduli problems  of the cases  $m = 1, 2$  on  $\partial{\bf H}_\F^n$ are trivial because  ${\rm U}(1,n;\F)$  acts doubly transitively  on $\partial{\bf H}_\F^n$ when $\F=\bc$ or $\bh$.   It is well-known that ${\rm O}(n,1)$ acts triply transitively  on  the boundary.  To handle the cases of  $m\geq 3$,  one need to develop  some geometric invariants or geometric tools, such as distance formula,  Cartan's angular  invariant \cite{car32,gol99}, and cross-ratio  \cite{korrei87} etc.

The moduli problem  of $\mathcal{M}(2,4; \bhct)$ was considered by Falbel, Parker and Platis \cite{falpla08,fal09,park08,parplal09}. The main tool  is the complex cross-ratio variety determined by three complex cross-ratios.

The moduli problem of  $\mathcal{M}(n,m; {\bf H}_\bc^n)$  was solved by Brehm and Et-Taoui \cite{bre90,bre98}.  Using   Bruhat decomposition,  Hakim and Sandler \cite{hasa00}  could construct  many important  geometric invariants in complex hyperbolic geometry. This  tool helped  them to arrange $n$  points in certain standard position on $\br \bp^{n-1}$ \cite{hasa0J}, and as well,  to  deal with   the moduli problem on $\overline{{\bf H}_\bc^n}$ \cite{hasa03}.

 We need to introduce the concept of Gram matrices  of  $m$-tuples in $\F \bp^n$ for further discussion.
\begin{dfn}\label{dfngram}
	Given an $m$-tuple $\mathfrak{p}=(p_1,\cdots, p_m)$ of pairwise distinct points in $\F \bp^n$ with lift  ${\bf p}=(\p_{1},\cdots, \p_{m})$.
	The following  Hermitian  matrix
	$$G(\p)=(g_{ij})=(\p_i^*J\p_j)=(\langle \p_{j},\p_{i}\rangle)$$
	is called the  {\it Gram matrix}  associated to $\mathfrak{p}$.
\end{dfn}
For the sake of simplicity, by a little abuse of notation,   we also say that  $\p$  is an $m$-tuple of pairwise distinct points in $\F \bp^n$  and  regard  $\p$  as an element in $\F_{n+1,m}$, the set of  $(n+1)\times m$ matrices over $\F$.
The  action of $f\in {\rm U}(1,n;\F)$ on $\F_{n+1,m}$  is the usual  matrix multiplication, that is,
$$f\p=(f\p_{1},\cdots, f\p_{m}).$$
Noting that $f^*Jf=J$,  we  have the following proposition.
\begin{prop}
	\begin{equation}\label{isomap}G(\p)=\p^*J\p=\p^*f^*Jf\p=G(f\p),\ \forall  f\in {\rm U}(1,n;\F).\end{equation}
\end{prop}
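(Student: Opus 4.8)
The plan is to reduce the displayed chain of equalities to two elementary facts: that the Gram matrix of Definition \ref{dfngram} admits the compact expression $G(\p)=\p^*J\p$, and that the defining relation $f^*Jf=J$ of ${\rm U}(1,n;\F)$ propagates through matrix multiplication on $\F_{n+1,m}$.

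First I would verify $G(\p)=\p^*J\p$. Writing $\p=(\p_1,\cdots,\p_m)\in\F_{n+1,m}$ with columns $\p_j\in\F^{n,1}$, the $i$-th row of $\p^*$ is $\p_i^*$ and the $j$-th column of $J\p$ is $J\p_j$, so the $(i,j)$ entry of the product $\p^*J\p$ equals $\p_i^*J\p_j=\langle\p_j,\p_i\rangle=g_{ij}$. Hence $G(\p)=\p^*J\p$ as $(m\times m)$ matrices over $\F$. This entrywise computation is legitimate over $\F=\bh$ as well, since matrix multiplication over the quaternions, although non-commutative, is associative because $\bh$ is associative.

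Next, for $f\in{\rm U}(1,n;\F)$ I would compute $G(f\p)$ directly. Since $f\p=(f\p_1,\cdots,f\p_m)$, the expression just obtained gives $G(f\p)=(f\p)^*J(f\p)$. Conjugate transpose reverses products, $(f\p)^*=\p^*f^*$, which holds over each of $\br,\bc,\bh$ because $\overline{ab}=\bar b\,\bar a$ in $\F$ (trivial for $\br,\bc$, and the standard anti-automorphism property of quaternionic conjugation). Therefore $G(f\p)=\p^*f^*Jf\p$, and substituting the relation $f^*Jf=J$ yields $G(f\p)=\p^*J\p=G(\p)$, which is exactly \eqref{isomap}.

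No substantial obstacle arises; the only points requiring care are the order-reversal in $(f\p)^*=\p^*f^*$ and the associativity of quaternionic matrix multiplication, which together make the bracketing $\p^*(f^*Jf)\p$ used in the final substitution unambiguous.
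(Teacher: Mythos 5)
Your proposal is correct and follows essentially the same route as the paper, which simply notes that $f^*Jf=J$ and reads off the chain $G(f\p)=(f\p)^*J(f\p)=\p^*f^*Jf\p=\p^*J\p=G(\p)$; your additional entrywise check that $G(\p)=\p^*J\p$ and the remark on $(f\p)^*=\p^*f^*$ over $\bh$ just make explicit what the paper leaves implicit. No gaps.
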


Given  two $m$-tuples $\mathfrak{p}=(p_1,\cdots, p_m)$  and $\mathfrak{q}=(q_1,\cdots, q_m)$  in $\F \bp^n$ with arbitrary lifts $\p=(\p_{1},\cdots, \p_{m})$ and   $\q=(\q_{1},\cdots, \q_{m})$.  We say that  $\mathfrak{p}$ and $\mathfrak{q}$  are ${\rm PU}(1,n;\F)$-congruent if there exists an $f\in {\rm U}(1,n;\F)$ such that   $$f(\p_i)=\q_i\lambda_i, \lambda_i\neq 0, i=1,\cdots,m,$$
 in language of matrix algebra,  that is,   $$f\p=\q D, \, D={\rm diag}(\lambda_1,\cdots,\lambda_m), \lambda_i\in \F- \{0\}.$$
Therefore
\begin{equation}\label{isofp1}G(\p)=\p^*J\p=\p^*f^*Jf\p=D^*\q^*J\q D=D^*G(\q)D.\end{equation}
Observe that  an arbitrary lift   of  $\mathfrak{p}$  can be represented by  $(\p_{1}\lambda_1,\cdots, \p_{m}\lambda_m)=\p D$ and
\begin{equation}\label{isolift1} G(\p D)=D^*\p^*J\p D=D^*G(\p)D.\end{equation}
The formulae (\ref{isofp1}) and (\ref{isolift1})  imply that   Gram matrices contain the information of  the diagonal action of ${\rm U}(1,n;\F)$  on $\p$.   Moreover, a  Gram matrix contains  the entries $\langle \p_{i},\p_{j}\rangle$, which  are  base material  to construct  the corresponding  Hermitian geometric invariants.  Hence  Gram matrix is the priority  tool in handling the moduli problem.

The moduli problem on  $\partial{\bf H}_\bc^n$ was solved by Cunha and Gusevskii \cite{cungus10,cungus12} mainly by Gram matrix.  The key idea  is that one need find a suitable matrix $D$ in (\ref{isolift1}) to construct corresponding normalized Gram matrix and then seek a bijection  between the independent  entries of  normalized Gram matrix  and  those geometric invariants   of  the parameter space  presenting  $\mathcal{M}(n,m;\partial{\bf H}_\bc^n)$.  We mention that the normalized processes  in \cite{cungus10,cungus12} and  the applications of Bruhat decomposition in \cite{hasa00}  share the some spirit  in eliminating the indeterminacy of $D$ in (\ref{isolift1}).

Let  $i(G(\p))=(n_+,n_-,n_0)$ be  the signature of Hermitian matrix $i(G(\p))$  and $V={\rm span}\{\p_1,\cdots, \p_m\}$ be of dimension $k+1$.
 There are two different cases of  the  moduli problem on  $\bp(V_+)$ according to $ n_++n_-=k+1$ or $ n_++n_-=k$ (see Theorem \ref{thm-inertia}).
   $V$ is called  parabolic in the latter case in \cite{chegre74}.  The two cases are termed by  regular and non regular cases in complex hyperbolic plane \cite{cungus12fr}. We still use this terminology in quaternionic setting.  In non regular case, the Gram matrices are unable to distinguish different  congruence classes.  In regular case, the orthogonality of positive points  always prevents one from taking similar normalized process in \cite{cungus12} and makes it extremely  difficult to find  the bi-directional recover process between   the geometric invariants and its corresponding  Gram matrix.   Cunha et al  surmounted  these difficulties  with exquisite techniques on complex hyperbolic plane \cite{cungus12fr}.

It is interesting to consider the moduli problems in  quaternionic hyperbolic geometry.
 However, one may encounter the difficulty  caused  by   the noncommutativity of quaternions.  Due to  this   noncommutativity, it is always  a huge challenge to do  computations in  quaternionic setting \cite{bisgen09,cpshimizu,kimpar03}.  Also, though in the literature  there have been  counterparts  of  terminologies such as  rank, determinant  and trace  which are extensively used in  commutative field,  the properties of  these concepts  may be much  different  in  quaternionic setting. One should be cautious to use them in noncommutative environment.  Furthermore,  another  essential difference between complex and quaternionic hyperbolic geometry is due to the  existence of elliptic elements of forms $\mu I_{n+1}$ in $\spn$, where $\mu\in {\rm Sp}(1)$. This fact can  make it  even more difficult to  define geometric invariants  and determine  the representative  Gram matrix in its equivalent class.

 By mainly using  quaternionic Cartan's angular  invariant and quaternionic cross-ratio in  $\overline{{\bf H}_\bh^n}$,  the author  \cite{caogd16}  solved  moduli problems  of  $\mathcal{M}(n,3; \overline{{\bf H}_\bh^n})$ and  $\mathcal{M}(n,4; \bhq)$, respectively.

We will continue the research in this direction. In this paper we concentrate  on the moduli problems  of  $\mathcal{M}(n,m; \bhq)$ and $\mathcal{M}(n,m; \bp(V_+) )$.   As stated in \cite{cungus12, cungus12fr}, the motivation of  our concerns comes from the research topic of  deformation spaces of pure loxodromic subgroup, as well as the current hot research topic concerning  subgroup generated by reflections in  submanifolds of dimension $n-1$ in ${\bf H}_\F^n$.

We need several notations to illustrate  our strategies  for overcoming the difficulties mentioned above.
At first, we figure out the relationship  between  the Gram matrix $G(\p)$ and  that of its permutation $\sigma(\p)$.  Using this  relationship, we are free to rearrange  the ordered $m$-tuple in question.

The  elementary matrix obtained by swapping row $i$ and row $j$ of the identity matrix $I_m$ is denoted by $T_{ij}$.   Let $\sigma$ be an element of symmetric group $\mathcal{S}_m$.  It is  well-known that $\sigma$  can be  expressed as  the product of transpositions $\sigma=\sigma_1\sigma_2\cdots \sigma_l$. We denote $T_{\sigma_t}=T_{ij}$ if $\sigma_t$ is a transposition of $i\to j\to i$ and  define
\begin{equation*}
T_{\sigma}=T_{\sigma_1}\cdots T_{\sigma_l}.
\end{equation*}

We can easily verify the following proposition.
\begin{prop}\label{permu-Gram}
	Let  $\p=(\p_1,\cdots,\p_m)$ be an  $m$-tuple of  points in $\F^{n,1} -\{0\}$.  Let $\sigma$ be an element of symmetric group $\mathcal{S}_m$.  Let $\sigma(\p)=(\p_{\sigma(1)},\cdots,\p_{\sigma(m)})$.    Then \begin{equation*}
	G(\p)=T_{\sigma}G(\sigma(\p))T_{\sigma}^*.
	\end{equation*}
\end{prop}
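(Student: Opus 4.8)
The plan is to realize the reordering $\sigma(\p)$ as right multiplication of the matrix $\p\in\F_{n+1,m}$ by a permutation matrix, and then to push this through the defining formula $G(\q)=\q^*J\q$ by exactly the computation that gives $(\ref{isolift1})$, which is in fact valid for an arbitrary $m\times m$ matrix in place of the diagonal $D$ there. Let $\e_1,\dots,\e_m$ be the standard basis of $\F^m$ and let $P_\sigma$ be the $m\times m$ real matrix determined by $P_\sigma\e_j=\e_{\sigma(j)}$. Then the $j$-th column of $\p P_\sigma$ equals $\p(P_\sigma\e_j)=\p\,\e_{\sigma(j)}=\p_{\sigma(j)}$, so $\sigma(\p)=\p P_\sigma$. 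Note that only the bilinear shape of $G$ enters below, not the defining property of ${\rm U}(1,n;\F)$.

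Next I would compute, using $(\p P_\sigma)^*=P_\sigma^*\p^*$,
\begin{equation*}
G(\sigma(\p))=(\p P_\sigma)^*J(\p P_\sigma)=P_\sigma^*\,(\p^*J\p)\,P_\sigma=P_\sigma^*\,G(\p)\,P_\sigma .
\end{equation*}
Since $P_\sigma$ is a real permutation matrix it is orthogonal, so $P_\sigma^*=P_\sigma^T=P_\sigma^{-1}$, and inverting the relation gives $G(\p)=P_\sigma\,G(\sigma(\p))\,P_\sigma^*$.

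It then remains to identify $P_\sigma$ with $T_\sigma$. For a transposition $\sigma_t$ interchanging $i$ and $j$, swapping rows $i$ and $j$ of $I_m$ produces precisely the matrix that sends $\e_i\mapsto\e_j$, $\e_j\mapsto\e_i$ and fixes every other basis vector; hence $T_{\sigma_t}=P_{\sigma_t}$. Moreover $\tau\mapsto P_\tau$ is a group homomorphism $\mathcal{S}_m\to{\rm GL}(m,\F)$, since $P_{\tau_1\tau_2}\e_k=\e_{\tau_1(\tau_2(k))}=P_{\tau_1}(P_{\tau_2}\e_k)$. Therefore, for any factorization $\sigma=\sigma_1\sigma_2\cdots\sigma_l$ into transpositions,
\begin{equation*}
T_\sigma=T_{\sigma_1}\cdots T_{\sigma_l}=P_{\sigma_1}\cdots P_{\sigma_l}=P_{\sigma_1\cdots\sigma_l}=P_\sigma ,
\end{equation*}
which in particular shows $T_\sigma$ is well defined independently of the chosen factorization. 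Substituting $P_\sigma=T_\sigma$ into the previous display yields $G(\p)=T_\sigma\,G(\sigma(\p))\,T_\sigma^*$, as claimed.

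The argument presents no genuine obstacle; the one point demanding care is the bookkeeping of the permutation-matrix conventions — whether $P_\tau$ permutes rows or columns, and whether $\tau\mapsto P_\tau$ is a homomorphism or an anti-homomorphism for the chosen convention — so that the transpositions in $T_\sigma=T_{\sigma_1}\cdots T_{\sigma_l}$ compose in the order in which they are written. Once the normalization $P_\sigma\e_j=\e_{\sigma(j)}$ is fixed, every choice is forced and the identity follows from the two displays above; a one-line check with $m=2$ and $\sigma=(1\,2)$ confirms the ordering of the factors and the placement of the conjugation.
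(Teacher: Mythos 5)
Your proof is correct, and it is the natural verification the paper has in mind (the paper states this proposition without proof, remarking only that it is easily verified). The computation $G(\sigma(\p))=P_\sigma^*G(\p)P_\sigma$ via $\sigma(\p)=\p P_\sigma$, the inversion using orthogonality of $P_\sigma$, and the identification $T_\sigma=P_\sigma$ through the homomorphism $\tau\mapsto P_\tau$ are all sound, and you rightly flag the row/column convention as the only place where care is needed.
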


Let $v=(v_1,\cdots,v_t)$ be a row vector in $\bh^t$ and \begin{equation*}{\bf O}_v=\{\mu v\mu^{-1}=(\mu v_1 \mu^{-1},\cdots,\mu v_t \mu^{-1}): \forall \mu\in {\rm Sp}(1)\}.\end{equation*}
The set ${\bf O}_v$ can be thought of   as the orbit of $v$ under the action of ${\rm Sp}(1)/\pm 1$. The procedure of giving  a coordinate to the orbit ${\bf O}_v$  is termed by {\it rotation-normalized algorithm} in this paper.
  We mention that rotation-normalized algorithm  stems both from the noncommutativity of  quaternions and the existence of
 isometries of the form $\mu I_{n+1}$ in $\spn$.
 Such an algorithm is indigenous in  quaternionic  hyperbolic geometry, while  obviously  vacuous in complex hyperbolic geometry.  We mention that  rotation-normalized algorithm is involved in each moduli problem of quaternionic hyperbolic geometry.

 When $V$ is parabolic, the Gram matrix $G(\p)$  loses  the information  of configuration and  only carries the information of strati-form structure (see Example  \ref{lostinf} and Proposition \ref{deg-case}).
This strati-form structure will help us to break down the space  $V={\rm span}\{\p_1,\cdots, \p_m\}$ into finite $2$-dimensional subspaces.  We mention that there exist  at most   $n-1$  such $2$-dimensional subspaces in $\F^{n,1}$. These $2$-dimensional subspaces share a common basis which is a fibre in $V_0$.  In each  subspace containing  more than three  points  of  the  $m$-tuple, we need to introduce new invariants (the cross-ratios in $\bh\cup \infty$) to parameterize their congruence classes.  Of particular interest will be the harmonious coexistence  of these  $2$-dimensional subspaces (see Proposition \ref{wholepoints}).

When $V$ is not  parabolic, the Gram matrix $G(\p)$ contains the full information of   the congruence class of  $\p$.  The moduli problem  on  $\bp(V_0)$ is tractable for each entry in  Gram matrix $G(\p)$ being nonzero.
 On handling the moduli problem  on  $\bp(V_+)$,  the pivotal  point  is  to find  a partition of  $S(m)=\{1,\cdots,m\}$ to perform rotation-normalized algorithm  in each block independently.  This will help us to   tackle the difficulty caused by  orthogonality.  Such a method is termed by {\it block-normalized algorithm}.

In our perspective, the parameter  of   $\pspn$-congruence class of  $\p$ is    independent entries of a unique representative Gram matrix when $V$ is not  parabolic. For example,  the  $\pspn$-congruence class of  three points in $\bhqt$    is its quaternionic Cartan's angular  invariant \cite{apakim07,caogd16}.
 We mainly rely on  the rotation-normalized  and block-normalized algorithms to construct such a moduli space in this paper.   Our approaches  sound natural and elementary.

  Of course, one can construct other geometric invariants based on the independent  entries of the unique Gram matrix, and search a bijective map between them.  These geometric meanings of  these  invariants may help us to understand the configuration of  points in  $\p$.    These efforts may be involved in using Hermitian product in  more positions to detour the pitfalls caused by orthogonality among positive points.  We will not concentrate on that aspect  in the present paper.

As should be apparent, our ideas and exposition owe a great deal to the works  of  the references cited above, especially to those of  \cite{cungus12,cungus12fr}.

The paper is organized  as follows. Section \ref{sec2sig} contains  properties of quaternions, the some basic facts in  quaternionic hyperbolic geometry and the inertia of Gram matrices. These properties provide us with the tool  to execute rotation-normalized algorithm and initiate the idea of  block-normalized algorithm.    Section \ref{sec3mv0} describes the moduli problem on $\bp(V_0)$ for $m>4$. This may be regarded as  a generalization of that of  \cite{caogd16}, or the counterpart in quaternionic geometry  of that of \cite{cungus12}. The application of rotation-normalized algorithm is fully described. This method will be  mimicked in the more complicated cases in succeeding sections.
  Section \ref{sec4m2} is devoted to  describing the duality of submanifolds of dimension $n-1$ and the polar vectors.  The  parameter space of  $\mathcal{M}(n,2; \bp(V_+) )$ is also  constructed.
 In Section \ref{sec5v+},  we mainly refine the structure of Gram matrices.  These refined structures are crucial in introducing  new invariants in non regular case and the block-normalized algorithm in regular case.
  In Section \ref{sec6ireg},  we construct
invariants which describe the $\pspn$-congruence classes of $V$  when  $V$ is parabolic.
 In Section \ref{sec7reg}, we describe the moduli space of configurations of quaternionic $(n-1)$-dimensional submanifolds  when  $V$ is  not parabolic in conceptual style.    Section \ref{sec8tri} contains  a parameter space of quaternionic hyperbolic triangles. The content of this  section may be regarded as an application of somewhat conceptual results in previous sections in  hyperbolic  triangle groups, a current hot research topic in  hyperbolic geometry.

Shortly after we completed this paper,   Gou  informed us that  He has also considered  similar problem in  the boundary of quaternionic hyperbolic space  \cite{gao}.

\section{The inertia of  Gram matrices}\label{sec2sig}
In this section, we will recall  some  properties of quaternions  and obtain some properties of  the inertia of Gram matrices.

\subsection{ Properties of quaternions}\label{sbs-quat}
Recall that a  quaternion is of the form $a=a_0+a_1{\bf i}+a_2{\bf j}+a_3{\bf k}\in \bh$
where $a_i\in \br$ and $ {\bf i}^2 = {\bf j}^2 = {\bf k}^2 = {\bf
	i}{\bf j}{\bf k} = -1.$ Let $\overline{a}=a_0-a_1{\bf i}-a_2{\bf
	j}-a_3{\bf k}$ and $|a|= \sqrt{\overline{a}a}=\sqrt{a_0^2+a_1^2+a_2^2+a_3^2}$  be the  conjugate  and modulus of $a$, respectively.  We define $\Re(a)=(a+\overline{a})/2$ and $\Im(a)=(a-\overline{a})/2$.    Two quaternions $a$ and $b$ are similar if there exists nonzero $\lambda \in \bh$  such that $b=\lambda a
\lambda^{-1}$.

It is useful to  view $\bh$ as  $\bh=\bc\oplus  \bc {\bf j}$.  In this way, each quaternion $a=a_0+a_1{\bf i}+a_2{\bf j}+a_3{\bf k}$ can be uniquely expressed as
$$a=(a_0+a_1{\bf i})+(a_2+a_3{\bf i}){\bf j}=c_1+c_2 {\bf j}=c_1+{\bf j}\bar{c_2}.$$
It is well-known that the action of ${\rm Sp}(1)/\pm 1$ on $\bh$ coincides with the action of ${\rm SO}(3)$ on $\br^3$.  We recall it as the following proposition.
\begin{prop}\label{so3}
Denote $\overrightarrow{v}=(x,y,z)^T$  for
$v=x{\bf i}+y{\bf j}+z{\bf k}\in \bh$, where $A^T$  is the
transpose of matrix $A$.
For a unit quaternion $\mu=u_0+u_1{\bf i}+u_2{\bf j}+u_3{\bf k}$,  we  define
$$M_{\mu}=\left(\begin{array}{ccc}
u_1^2+u_0^2-u_3^2-u_2^2&     2u_1u_2+2u_0u_3&     2u_1u_3-2u_0u_2\\
2u_1u_2-2u_0u_3 & u_2^2-u_3^2+u_0^2-u_1^2&     2u_2u_3+2u_0u_1 \\
2u_1u_3+2u_0u_2&     2u_2u_3-2u_0u_1& u_3^2-u_2^2-u_1^2+u_0^2
\end{array}\right).$$
Then  $M_{\mu}\in {\rm SO}(3)$ and $$\overrightarrow{\bar{\mu}v\mu}=M_{\mu}\overrightarrow{v}.$$
 In particular  $$M_{e^{{\bf i}\beta}}=
\left(\begin{array}{ccc}
1&    0&     0\\
0 & \cos 2\beta&    \sin 2\beta \\
0&    -\sin 2\beta& \cos 2\beta
\end{array}\right).$$
\end{prop}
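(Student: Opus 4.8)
The plan is a direct computation followed by a short connectedness argument. First I would write $v=x\bi+y\bj+z\bk$ and expand the product $\bar\mu v\mu$ using $\bi^2=\bj^2=\bk^2=\bi\bj\bk=-1$ together with $\bi\bj=\bk,\ \bj\bi=-\bk$, etc. Since $\mu$ is a unit quaternion, $\bar\mu=\mu^{-1}$, so $\bar\mu v\mu=\mu^{-1}v\mu$ is a conjugation of $v$; in particular $\Re(\bar\mu v\mu)=\Re(v)=0$, so $\bar\mu v\mu$ is again pure imaginary and $\overrightarrow{\bar\mu v\mu}$ is well defined. Collecting the coefficients of $\bi$, $\bj$, $\bk$ separately, one reads off that the coefficient of $\bi$ equals $(u_1^2+u_0^2-u_3^2-u_2^2)x+(2u_1u_2+2u_0u_3)y+(2u_1u_3-2u_0u_2)z$, and analogously for the $\bj$- and $\bk$-coefficients; these three expressions are exactly the rows of $M_\mu$ applied to $(x,y,z)^T$, which gives the identity $\overrightarrow{\bar\mu v\mu}=M_\mu\overrightarrow{v}$. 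The only delicate point is sign bookkeeping in the noncommutative products; there is no conceptual obstacle.

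It then remains to check $M_\mu\in{\rm SO}(3)$. One could verify $M_\mu M_\mu^T=I_3$ by brute force using the constraint $u_0^2+u_1^2+u_2^2+u_3^2=1$, and $\det M_\mu=1$ by direct expansion, but I would prefer the following cleaner route. The map $v\mapsto\bar\mu v\mu=\mu^{-1}v\mu$ is $\br$-linear on the space of pure imaginary quaternions and preserves the modulus (since $|\mu^{-1}v\mu|=|\mu|^{-1}|v||\mu|=|v|$), hence it preserves the Euclidean norm on $\br^3$; therefore $M_\mu\in{\rm O}(3)$. Since ${\rm Sp}(1)$ is homeomorphic to the connected set $S^3$, the map $\mu\mapsto\det M_\mu\in\{\pm1\}$ is continuous and equals $1$ at $\mu=1$, so it is identically $1$; thus $M_\mu\in{\rm SO}(3)$.

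Finally, the displayed special case follows by putting $\mu=e^{\bi\beta}=\cos\beta+\bi\sin\beta$, i.e. $u_0=\cos\beta$, $u_1=\sin\beta$, $u_2=u_3=0$, substituting into $M_\mu$, and simplifying with $\cos^2\beta-\sin^2\beta=\cos 2\beta$ and $2\sin\beta\cos\beta=\sin 2\beta$. I expect the whole argument to be routine; the main (mild) obstacle is simply keeping the signs straight while expanding $\bar\mu v\mu$.
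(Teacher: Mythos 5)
Your proposal is correct. Note that the paper itself gives no proof of this proposition: it is stated as a recollection of the well-known identification of ${\rm Sp}(1)/\pm 1$ acting by conjugation with ${\rm SO}(3)$, so there is no in-paper argument to compare against. Your two-part plan is sound: the coefficient extraction does produce exactly the rows of $M_{\mu}$ (e.g.\ for $v=\bi$ one finds $\bar{\mu}\bi\mu=(u_0^2+u_1^2-u_2^2-u_3^2)\bi+(2u_1u_2-2u_0u_3)\bj+(2u_1u_3+2u_0u_2)\bk$, the first column of $M_{\mu}$, and similarly for $\bj,\bk$), and the observation that conjugation preserves the real part and the modulus, combined with connectedness of ${\rm Sp}(1)\cong S^3$ and continuity of $\mu\mapsto\det M_{\mu}$, is a clean way to land in ${\rm SO}(3)$ without expanding the determinant. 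The special case for $\mu=e^{\bi\beta}$ follows by direct substitution as you say.
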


\begin{lem}\label{lemtonorm}
	Let $v_1=x_1{\bf i}+y_1{\bf j}+z_1{\bf k}$ and  $v_2=x_2{\bf i}+y_2{\bf j}+z_2{\bf k}$  such that  $\overrightarrow{v_1}$ and $\overrightarrow{v_2}$  are linear independent.  Let  $ v_1\cdot v_2=\overrightarrow{v_2}^T\overrightarrow{v_1}$.   Then there exists a unique  element  $\mu\in {\rm Sp(1)/\pm 1}$  such that
	\begin{equation}\label{normmu}\bar{\mu}v_1\mu=|v_1|\bi,\  \bar{\mu}v_2\mu=\frac{v_1\cdot v_2}{|v_1|}\bi+ \frac{\sqrt{(|v_1||v_2|)^2-(v_1\cdot v_2)^2}}{|v_1|}\ \bj.
	\end{equation}
\end{lem}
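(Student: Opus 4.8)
The plan is to transport the statement to the group $SO(3)$ via the well-known isomorphism $\mathrm{Sp}(1)/\{\pm 1\}\cong SO(3)$, $\mu\mapsto M_\mu$, recalled in Proposition~\ref{so3}. For a purely imaginary quaternion $v$ write $\overrightarrow{v}\in\br^3$ for the associated vector; then $|v|=|\overrightarrow{v}|$ and $v_1\cdot v_2=\overrightarrow{v_2}^{\,T}\overrightarrow{v_1}$ is the ordinary (symmetric) dot product, while $\overrightarrow{\bi},\overrightarrow{\bj},\overrightarrow{\bk}$ is the standard basis of $\br^3$. Since $\bar\mu v\mu$ is again purely imaginary and $\overrightarrow{\bar\mu v\mu}=M_\mu\overrightarrow{v}$, the pair of equations in (\ref{normmu}) holds for a given $\mu$ if and only if $M:=M_\mu$ sends $\overrightarrow{v_1}$ to $|v_1|\,\overrightarrow{\bi}$ and $\overrightarrow{v_2}$ to the vector displayed on the right-hand side of the second equation of (\ref{normmu}). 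So it is enough to prove that exactly one $M\in SO(3)$ has these two properties, and then take $\mu$ to be its unique preimage under $\mu\mapsto M_\mu$.

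For existence I would apply Gram--Schmidt to $\overrightarrow{v_1},\overrightarrow{v_2}$: set $w_1=\overrightarrow{v_1}/|v_1|$, let $w_2$ be the unit vector in the direction of $\overrightarrow{v_2}-(\overrightarrow{v_2}\cdot w_1)w_1$ (nonzero precisely because $\overrightarrow{v_1},\overrightarrow{v_2}$ are linearly independent), and put $w_3=w_1\times w_2$. Then $(w_1,w_2,w_3)$ is a positively oriented orthonormal basis of $\br^3$, so the matrix $M$ with rows $w_1^T,w_2^T,w_3^T$ lies in $SO(3)$ and maps $w_1,w_2,w_3$ to $\overrightarrow{\bi},\overrightarrow{\bj},\overrightarrow{\bk}$. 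Hence $M\overrightarrow{v_1}=|v_1|\,\overrightarrow{\bi}$, and, since $\overrightarrow{v_2}$ lies in $\mathrm{span}(w_1,w_2)$, one gets $M\overrightarrow{v_2}=(\overrightarrow{v_2}\cdot w_1)\,\overrightarrow{\bi}+(\overrightarrow{v_2}\cdot w_2)\,\overrightarrow{\bj}$; a short computation gives $\overrightarrow{v_2}\cdot w_1=(v_1\cdot v_2)/|v_1|$ and $\overrightarrow{v_2}\cdot w_2=\sqrt{(|v_1||v_2|)^2-(v_1\cdot v_2)^2}\,/\,|v_1|$, which is exactly the required image.

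For uniqueness, suppose $M'\in SO(3)$ also has the two properties. Then $P=M'M^{-1}$ fixes $\overrightarrow{\bi}$, hence is a rotation about the $\overrightarrow{\bi}$-axis, i.e. has block form $\mathrm{diag}(1,R)$ with $R\in SO(2)$ acting on $\mathrm{span}(\overrightarrow{\bj},\overrightarrow{\bk})$. Since $P$ also fixes $M\overrightarrow{v_2}$ and the $\overrightarrow{\bj}$-coefficient of $M\overrightarrow{v_2}$ is strictly positive, $R$ fixes a nonzero vector, so $R=I$ and $M'=M$. Transporting back through the isomorphism of Proposition~\ref{so3}, the unique $\mu\in\mathrm{Sp}(1)/\{\pm1\}$ with $M_\mu=M$ satisfies $\overrightarrow{\bar\mu v_i\mu}=M\overrightarrow{v_i}$ for $i=1,2$, which is (\ref{normmu}), and its uniqueness follows from that of $M$.

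All the computations here are routine linear algebra, so the statement is not deep; the two points where a little care is needed are the orientation---one must close up the Gram--Schmidt frame with a cross product so that $M$ genuinely lies in $SO(3)$ and not merely in $O(3)$---and the uniqueness step, whose content is that once the image of $\overrightarrow{v_1}$ is fixed the only remaining freedom is the circle of rotations about the $\overrightarrow{\bi}$-axis, and that circle acts freely on the nonzero component of $\overrightarrow{v_2}$ orthogonal to $\overrightarrow{v_1}$. This is precisely where the linear independence hypothesis enters, and it is also what makes the $\overrightarrow{\bj}$-coefficient in (\ref{normmu}) strictly positive, so that the normal form is unambiguous.
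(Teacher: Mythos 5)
Your proof is correct and takes essentially the same approach as the paper's: both reduce the statement to rotations of $\Im(\bh)\cong\br^3$ via Proposition \ref{so3}, produce the required rotation by moving the plane of $\overrightarrow{v_1},\overrightarrow{v_2}$ into standard position (your Gram--Schmidt construction just makes the paper's geometric sketch explicit), and prove uniqueness by noting that the stabilizer of $\bi$ acts freely on the nonzero $\bj$-component of the image of $v_2$. The paper phrases that last step quaternionically (showing $\bar\nu\mu$ commutes with $\bi$ and $\bj$, hence is $\pm1$), which is exactly your $SO(3)$ argument transported back through the double cover.
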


\begin{proof}
	Let $v_1=x_1{\bf i}+y_1{\bf j}+z_1{\bf k}$,   $v_2=x_2{\bf i}+y_2{\bf j}+z_2{\bf k}$ and  $\theta$  the angle between $\overrightarrow{v_1}$ and $\overrightarrow{v_2}$.
Identify $\Im(\bh)$ with  the $3$-dimensional real space ${\bf xyz}$.
Geometrically, by rotating the plane spanned by $v_1$ and $v_2$ to ${\bf xy}$ plane and then rotating around the $\z$-axis or ${\bf x}$-axis if necessary,
 we can obtain a $\mu$ such that formulae (\ref{normmu}) hold.
	It is helpful to regard  this formulae   as 	$$\bar{\mu}v_1\mu=|v_1|\bi, \bar{\mu}v_2\mu=|v_2|\cos\theta \bi+|v_2|\sin\theta \bj. $$
	Suppose that there exists another unit quaternion $\nu$ satisfying the above equalities. Then we have $\nu^{-1}\mu|v_1|\bi \bar{\mu}\bar{\nu}^{-1}=|v_1|\bi$ and therefore $\nu^{-1}\mu $ is a unit complex number. Similarly we get $\nu^{-1}\mu \bj \bar{\mu}\bar{\nu}^{-1}=\bj$ which implies that $\nu^{-1}\mu=\pm 1$.  Therefore $\nu=\mu$ or $\nu=-\mu$.
\end{proof}

Lemma \ref{lemtonorm}  is the foundation of  rotation-normalized algorithm. We give an  explicit formula of such a unique $\mu$ by the following process.
Note that $$-(|v_1|\bi+v_1)v_1(v_1|\bi+v_1)=\big||v_1|\bi+v_1\big|^2 |v_1|\bi.$$
Let \begin{equation}\label{findnu}\nu=\nu(v_1)=\left\{
\begin{array}{ll}
{\bf j}, & \hbox{provided}\ x_1<0, y_1^2+z_1^2=0; \\
\frac{|v_1|\bi+v_1}{\sqrt{2|v_1|(|v_1|+x_1)}}, & \hbox{otherwise.} \\
\end{array}
\right.\end{equation}
Then  $$|\nu|=1,  \bar{\nu}v_1\nu=|v_1| \bi.$$ Let $\bar{\nu}v_2\nu=c_1+c_2\bj$, where $c_1,c_2$ are complex numbers.  Since $c_2\neq 0$,  we have  $e^{-2{\bf i}\alpha} c_2=|c_2|$ with   $e^{{\bf i}\alpha}=\sqrt{\frac{c_2}{|c_2|}}$.    Therefore  $\mu=\pm \nu e^{{\bf i}\alpha}$ is the desired unit quaternion.
By finding the corresponding $c_2$ and (\ref{findnu}), we obtain  the following formula:
\begin{equation}\label{findmu}\mu=\mu(v_1,v_2)=\left\{
\begin{array}{ll}
\pm\sqrt{\frac{y_2+z_2\bi}{\sqrt{y_2^2+z_2^2}}}\ \bj, & \hbox{provided}\ x_1<0, y_1^2+z_1^2=0; \\
\pm\frac{|v_1|\bi+v_1}{\sqrt{2|v_1|(|v_1|+x_1)}}\ \sqrt{\frac{F}{|F|}} , & \hbox{otherwise,} \\
\end{array}
\right.
\end{equation}
where $$F=2x_2(|v_1|+x_1)(y_1+z_1\bi)-(|v_1|+x_1)^2(y_2+z_2\bi)+(y_2-z_2\bi)(y_1+z_1\bi)^2.$$

\subsection{The inertia of  Gram matrices}\label{sec2.3ine}
In this paper,  the  $J$  in  quaternionic Hermitian product  $\langle\z,\,\w\rangle=\w^*J\z$ given in Section \ref{se1-intr} will be taken one of the following  forms:
$$J_b=\left(
\begin{array}{cc}
I_n & 0 \\
0 & -1 \\
\end{array}
\right)\ \mbox{or}\  J_s=\left(
\begin{array}{ccc}
0 & 0 & 1 \\
0 & I_{n-1} & 0 \\
1 & 0 & 0\\
\end{array}
\right).$$
The corresponding quaternionic hyperbolic spaces are usually termed by {\it ball model} and {\it Siegel domain model}, respectively.
Let $C$ be the Cayley transformation  mapping  the ball to the  Siegel domain.  Then the relation of the two models can  be  mainly expressed by the following two equations:  $$\w^*J_b\z=(C\w)^*J_s(C\z),\, \,  g^*J_bg=J_b=C^{-1}J_sC=C^{-1}(CgC^{-1})^*J_s(CgC^{-1})C.$$
Each model has its own advantage in some situations.   Basically we  work on Siegel domain model only  in Sections \ref{sec6ireg}.

Note that $g^*J_bg=J_b$ with $g=(g_1,\cdots,g_{n+1})$, that is,
\begin{equation}\label{chaele}\langle g_i, g_j\rangle=0, i\neq j, \langle g_i, g_i\rangle=1, i=1,\cdots,n, \langle g_{n+1}, g_{n+1}\rangle=-1.\end{equation}
In  terms of   Gram matrix given by Definition \ref{dfngram},  we have $$G(g)=J_b, \forall g\in \spn.$$  Based on this observation, we  have  the following proposition.
\begin{prop}\label{mutuorth}
	Let   $\p=(\p_{1},\cdots, \p_{m})$ and   $\q=(\q_{1},\cdots, \q_{m})$  such that $\langle \p_{i}, \p_{i}\rangle=\langle \q_{i}, \q_{i}\rangle=1$   and $\langle\p_{i}, \p_{j}\rangle=\langle\q_{i}, \q_{j}\rangle=0, i\neq j$. Then there is a    $g\in \spn$ such that $$g\p_i=\q_i,\  i=1,\cdots,m.$$
\end{prop}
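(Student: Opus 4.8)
The plan is to complete each of the two $J_b$-orthonormal $m$-frames $(\p_1,\dots,\p_m)$ and $(\q_1,\dots,\q_m)$ to a full $J_b$-orthonormal basis of $\bh^{n,1}$ — equivalently, to an element of $\spn$ viewed as an $(n+1)\times(n+1)$ matrix — and then to compose one such completion with the inverse of the other.

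First I would note that the hypotheses say precisely that the Gram matrix of $(\p_1,\dots,\p_m)$ equals the identity $I_m$; in particular the $\p_i$ are linearly independent and $V_1:={\rm span}\{\p_1,\dots,\p_m\}$ is a positive-definite, hence nondegenerate, subspace of $\bh^{n,1}$ (for $v=\sum_i\p_ic_i\in V_1$ one computes $\langle v,v\rangle=\sum_i|c_i|^2$). Consequently $\bh^{n,1}=V_1\oplus V_1^{\perp}$ with $V_1^{\perp}$ nondegenerate, and by the inertia theory for quaternionic Hermitian forms (cf. Theorem~\ref{thm-inertia}) its signature is $(n-m,1)$ — so that $m\le n$, as it must be. I would then pick an orthonormal basis of $V_1^{\perp}$, namely $\p_{m+1},\dots,\p_{n}$ with $\langle\p_i,\p_i\rangle=1$ together with $\p_{n+1}$ with $\langle\p_{n+1},\p_{n+1}\rangle=-1$, all mutually orthogonal, and set $P=(\p_1,\dots,\p_{n+1})$. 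The enlarged frame then satisfies exactly the relations (\ref{chaele}), i.e. $G(P)=P^{*}J_bP=J_b$; since its columns form a basis of $\bh^{n,1}$, the matrix $P$ is invertible, and therefore $P\in\spn$. Carrying out the same completion for $\q$ produces $Q=(\q_1,\dots,\q_{n+1})\in\spn$ with $G(Q)=J_b$.

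Finally I would set $g=QP^{-1}$. Since $\spn$ is a group, $g\in\spn$, and from $gP=Q$ one reads off columnwise that $g\p_i=\q_i$ for $i=1,\dots,m$, which is the assertion.

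The only step that is not purely formal is the frame completion: one must know that $V_1^{\perp}$ is nondegenerate of inertia $(n-m,1)$ and admits an orthonormal basis over $\bh$. This is Sylvester's law of inertia in the noncommutative setting, and the delicate point is the quaternionic Gram--Schmidt normalization, where the scalars used to rescale the vectors must be applied on the correct side (recall $\langle\z\lambda,\w\rangle=\langle\z,\w\rangle\lambda$). Granting the inertia machinery of this section, this is routine, and the remainder of the argument is immediate.
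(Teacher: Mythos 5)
Your argument is correct and is essentially the paper's own proof: both extend the orthonormal $m$-frames $\p$ and $\q$ to elements $P,Q\in\spn$ and take $g=QP^{-1}$. You merely spell out in more detail the signature bookkeeping for $V_1^{\perp}$ that the paper compresses into ``by the signature restriction, we have $m\le n$.''
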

\begin{proof}
	By the signature restriction, we have $m\le n$. We can extend $\p$ and $\q$ to $f=(\p,\p_{m+1},\cdots,\p_{n+1})$ and  $h=(\q,\q_{m+1},\cdots,\q_{n+1})$ such that $f,h\in \spn$. Then $g=hf^{-1}$ is the desired isometry.
\end{proof}
Proposition \ref{mutuorth}  implies   the following simple result.
\begin{thm}\label{1trans}
	$\pspn$ acts  transitively  on $\bp(V_+)$.
\end{thm}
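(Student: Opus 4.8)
The plan is to deduce Theorem~\ref{1trans} directly from Proposition~\ref{mutuorth} by reducing an arbitrary positive point to a single normalized representative. First I would take an arbitrary $p\in\bp(V_+)$ with a lift $\p_1\in V_+$, so that $\langle\p_1,\p_1\rangle>0$. Since $\langle\p_1\lambda,\p_1\lambda\rangle=\bar\lambda\langle\p_1,\p_1\rangle\lambda=|\lambda|^2\langle\p_1,\p_1\rangle$ for $\lambda\in\bh-\{0\}$, I can rescale by $\lambda=|\langle\p_1,\p_1\rangle|^{-1/2}\in\br_{>0}$ to obtain a lift, still called $\p_1$, with $\langle\p_1,\p_1\rangle=1$. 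Now pick any fixed reference point $q\in\bp(V_+)$ with a chosen lift $\q_1$ normalized in the same way, $\langle\q_1,\q_1\rangle=1$ (for instance $\q_1=(1,0,\cdots,0)^T$, which satisfies $\langle\q_1,\q_1\rangle=1$ with respect to $J_b$).

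Then I would apply Proposition~\ref{mutuorth} in the case $m=1$: the hypotheses $\langle\p_1,\p_1\rangle=\langle\q_1,\q_1\rangle=1$ are exactly what I have arranged, and the off-diagonal orthogonality conditions are vacuous when $m=1$. The proposition therefore yields $g\in\spn$ with $g\p_1=\q_1$. Passing to the projectivization, the induced element of $\pspn$ sends $p$ to $q$. Since $p\in\bp(V_+)$ was arbitrary and $q$ is a fixed basepoint, every point of $\bp(V_+)$ lies in the $\pspn$-orbit of $q$; equivalently, the action is transitive.

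I do not anticipate a serious obstacle here — the statement is essentially a corollary of Proposition~\ref{mutuorth}. The only point requiring any care is the rescaling step: one must check that a positive Hermitian self-product of a vector can always be scaled to $1$ by a \emph{nonzero quaternionic} (in fact positive real) factor, which is immediate from $\langle\z\lambda,\z\lambda\rangle=|\lambda|^2\langle\z,\z\rangle$ together with the fact that $\langle\z,\z\rangle$ is a positive real number for $\z\in V_+$ (Hermitian forms take real values on the diagonal). One should also note that the projection $\spn\to\pspn$ does not cause any loss, since transitivity of the $\spn$-action on normalized lifts already forces transitivity of the $\pspn$-action on points of $\bp(V_+)$. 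If a fully self-contained argument is preferred, one can even bypass the basepoint and argue directly that for any two normalized lifts $\p_1,\q_1$ there is $g\in\spn$ with $g\p_1=\q_1$, which is the $m=1$ instance of Proposition~\ref{mutuorth} verbatim.
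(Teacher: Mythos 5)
Your proposal is correct and is exactly the paper's argument: the paper presents Theorem~\ref{1trans} as an immediate consequence of Proposition~\ref{mutuorth} (the $m=1$ case after rescaling a lift to unit norm), with no further proof given. The rescaling step you flag is handled exactly as you describe, using that $\langle\z,\z\rangle$ is a positive real for $\z\in V_+$.
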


Let $\z^{\perp}=\{\w\in \bh^{n,1}:  \langle \z, \w\rangle=0\}$  be the orthogonal complement of the  fibre $\z\bh$  in $\bh^{n,1}$ and  $\dim_q(V)$  the quaternionic dimension of  subspace $V$ of  $\bh^{n,1}$.

\begin{prop}\label{orthcomp} We have the following statements concerning the orthogonal complements on $\bh^{n,1}$.
	\begin{itemize}
		\item [(i)] If $\z\in V_-$ then $\z^{\perp}\subset V_+$.  There exists  an orthogonal basis $\{\p_2,\cdots, \p_{n+1}\}$ in  $\z^{\perp}$,  $\dim_q(\z^{\perp})=n$ and $\{\z,\p_2,\cdots, \p_{n+1}\}$ is a basis of  $\bh^{n,1}$.
		\item [(ii)]If $\z\in V_0$ then $\z^{\perp}\subset V_+\cup V_0$ and $\z^{\perp}\cap V_0=\z\bh$. There exist  mutually orthogonal vectors  $\{\p_2,\cdots, \p_{n}\}$  in $ V_+$  and  $$\z^{\perp}={\rm span}\{\z,\p_2,\cdots, \p_{n}\}.$$
		\item [(iii)]If $\z\in V_+$ then  $$\z^{\perp}\cap V_+\neq \emptyset,\ \z^{\perp}\cap V_0\neq \emptyset, \z^{\perp}\cap V_-\neq \emptyset.$$  There exist  mutually orthogonal vectors  $\{\p_2,\cdots, \p_{n},\p_{n+1}\}$   such that   $$ {\rm span}\{\z,\p_2,\cdots, \p_{n}\}\subset V_+,\  \p_{n+1}\in V_-$$  and $\{\z,\p_2,\cdots, \p_{n+1}\}$ is a basis of  $\bh^{n,1}$.
	\end{itemize}
\end{prop}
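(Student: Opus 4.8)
The plan is to reduce each of the three parts to a computation with a single convenient representative, exploiting that $\pspn$ acts transitively on $\bp(V_0)$ (noted in the introduction) and on $\bp(V_+)$ (Theorem \ref{1trans}); part $(\mathrm{i})$ is cleanest handled directly. Throughout one only uses the two facts about a non-degenerate $\bh$-Hermitian form that survive noncommutativity: the existence of an orthogonal basis (quaternionic Gram--Schmidt) and the well-definedness of the signature (Theorem \ref{thm-inertia}). For $(\mathrm{i})$: rescale the lift of $\z\in V_-$ so that $\langle\z,\z\rangle=-1$, extend $\{\z\}$ to an orthogonal basis $\{\z,\p_2,\cdots,\p_{n+1}\}$ of $\bh^{n,1}$ by Gram--Schmidt, and note that the signature $(n,1)$ forces $\langle\p_i,\p_i\rangle>0$ for $i\ge 2$; rescale so that $\langle\p_i,\p_i\rangle=1$. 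Since $\z\bh$ is a non-degenerate line, $\bh^{n,1}=\z\bh\oplus\z^{\perp}$ and $\dim_q(\z^{\perp})=n$, so $\{\p_2,\cdots,\p_{n+1}\}$ is a basis of $\z^{\perp}$; and for $\w=\sum_{i=2}^{n+1}\p_ic_i\in\z^{\perp}$ one gets $\langle\w,\w\rangle=\sum_{i=2}^{n+1}|c_i|^2>0$ unless $\w=0$, so $\z^{\perp}\subset V_+$. The dimension count shows $\{\z,\p_2,\cdots,\p_{n+1}\}$ is a basis of $\bh^{n,1}$.

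For $(\mathrm{iii})$, by Theorem \ref{1trans} it suffices to treat $\z=\e_1$ in the ball model $J=J_b$. Then $\langle\e_1,\w\rangle=\bar w_1$, so $\z^{\perp}=\mathrm{span}\{\e_2,\cdots,\e_{n+1}\}$, carrying the form $\mathrm{diag}(I_{n-1},-1)$. Take $\p_i=\e_i$ for $i=2,\cdots,n+1$: these are mutually orthogonal, $\mathrm{span}\{\z,\p_2,\cdots,\p_n\}=\mathrm{span}\{\e_1,\cdots,\e_n\}\subset V_+$, $\p_{n+1}=\e_{n+1}\in V_-$, and $\{\z,\p_2,\cdots,\p_{n+1}\}$ is the standard basis; moreover $\e_2\in\z^{\perp}\cap V_+$, $\e_{n+1}\in\z^{\perp}\cap V_-$ and $\e_2+\e_{n+1}\in\z^{\perp}\cap V_0$. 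Transporting everything back by the inverse of the isometry carrying the original $\z$ to a multiple of $\e_1$ gives the general statement, since an element of $\spn$ preserves the Hermitian form, orthogonality, the sets $V_\pm,V_0$, and linear independence.

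For $(\mathrm{ii})$, transitivity of $\pspn$ on $\bp(V_0)$ lets us take $\z=\e_1+\e_{n+1}$ in the ball model; then $\langle\z,\z\rangle=0$ and $\langle\z,\w\rangle=\bar w_1-\bar w_{n+1}$, so $\z^{\perp}=\{\w: w_1=w_{n+1}\}=\mathrm{span}\{\z,\e_2,\cdots,\e_n\}$, of quaternionic dimension $n$. Put $\p_i=\e_i$ for $i=2,\cdots,n$; each lies in $V_+$, they are mutually orthogonal, and each is orthogonal to $\z$. For $\w=\z a+\sum_{i=2}^{n}\p_ic_i\in\z^{\perp}$, the terms involving $\z$ drop out because $\langle\z,\z\rangle=0$ and $\langle\z,\p_i\rangle=0$, leaving $\langle\w,\w\rangle=\sum_{i=2}^{n}|c_i|^2\ge 0$, with equality exactly when every $c_i=0$, i.e. $\w\in\z\bh$. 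Hence $\z^{\perp}\subset V_+\cup V_0$ and $\z^{\perp}\cap V_0=\z\bh$. Transporting back by the chosen isometry finishes the argument.

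I do not expect a genuine obstacle here; the content is routine linear algebra. What requires attention in the quaternionic setting is only: (a) that quaternionic Gram--Schmidt and the inertia theorem (Theorem \ref{thm-inertia}) are available for $\bh$-Hermitian forms; (b) the degeneracy in case $(\mathrm{ii})$, where $\z\in\z^{\perp}$, so that the structure of $\z^{\perp}$ lives on $\z^{\perp}/\z\bh$ — but the single identity $\langle\w,\w\rangle=\sum|c_i|^2$ delivers both $\z^{\perp}\subset V_+\cup V_0$ and $\z^{\perp}\cap V_0=\z\bh$ at once; and (c) keeping right scalars on the correct side, e.g. $\langle\sum\p_ic_i,\sum\p_jc_j\rangle=\sum\bar c_i\langle\p_i,\p_i\rangle c_i=\sum|c_i|^2\langle\p_i,\p_i\rangle$.
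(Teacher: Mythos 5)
Your proof is correct and follows essentially the same route as the paper: case (i) via extending $\z$ to an orthogonal basis and reading off the signature, and cases (ii)--(iii) by normalizing $\z$ to a standard representative ($\e_1+\e_{n+1}$, resp.\ $\e_1$) using transitivity and computing $\z^{\perp}$ directly. You simply fill in details the paper leaves implicit (the Gram--Schmidt step, the verification that $\z^{\perp}\cap V_0=\z\bh$, and the explicit treatment of case (iii), which the paper dismisses with ``follows similarly'').
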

\begin{proof}
	Let  $\z\in V_- $. Then $\z^{\perp}\subset V_+$. By (\ref{chaele}), there exists   an orthogonal basis $\{\p_2,\cdots, \p_{n+1}\}$ in  $\z^{\perp}$. Hence   $\dim_q(\z^{\perp})=n$ and $\{\z,\p_2,\cdots, \p_{n+1}\}$ is a basis of  $\bh^{n,1}$.  Therefore  case  (i)  holds.  Case (iii) follows similarly.
	
	Let  $\z\in V_0$. We may assume that $\z=(1,0,\cdots,0,1)^T$. It is obvious that $\w \in \z^{\perp}$ is of the form $\w=(q_1,q_2,\cdots,q_{n},q_1)^T$. Let $\e_i$ be the standard basis of  $\bh^{n,1}$. Then  $\e_i,i=2,\cdots,n$ belong to $\z^{\perp}$ and $$\z^{\perp}={\rm span}\{\z,\e_2,\cdots, \e_{n}\}.$$
\end{proof}

Recall that  $A\in \bh_{n,n}$ is called Hermitian if and only if $A=A^*$.   Let ${\rm H}_n(\bh)$   be the collection  of $n\times n$ Hermitian matrices.
It is well-known that the  right eigenvalues of  $A\in {\rm H}_n(\bh)$ are real and there exists an invertible matrix $B\in \bh_{n,n}$  such that $B^*AB$ is a diagonal matrix which has only entries $+1,-1,0$ along the diagonal.  The numbers of $+1$s, $-1$s and $0$s are denoted by $n_+,n_-$ and $n_0$, respectively. We denote the signature of $A$ by $$i(A)=(n_+,n_-,n_0).$$

\begin{prop}(\cite[Proposition 1.1]{caogd16})\label{prop-1.1} If $\z,\w\in \bh^{n,1}-\{0\}$  with $\langle \z,\,\z \rangle\leq 0 $ and  $\langle\w,\,\w\rangle\leq 0 $ then either $\w=\z\lambda$ for some $\lambda\in \bh$ or  $\langle\z,\,\w\rangle\neq 0$.
\end{prop}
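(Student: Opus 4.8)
The plan is to argue by contradiction: assume $\langle\z,\w\rangle=0$ and that $\w$ is not a right scalar multiple of $\z$, and derive a contradiction with the signature hypothesis on $\bh^{n,1}$, namely that the Hermitian form $\langle\cdot,\cdot\rangle$ has signature $(n,1)$. The point is that if $\z$ and $\w$ span a genuinely $2$-dimensional quaternionic subspace $W=\z\bh\oplus\w\bh$, then the restriction of $\langle\cdot,\cdot\rangle$ to $W$ has a Gram matrix, relative to the basis $\{\z,\w\}$, of the form
\begin{equation*}
\begin{pmatrix}\langle\z,\z\rangle & \langle\w,\z\rangle\\ \langle\z,\w\rangle & \langle\w,\w\rangle\end{pmatrix}
=\begin{pmatrix}\langle\z,\z\rangle & 0\\ 0 & \langle\w,\w\rangle\end{pmatrix},
\end{equation*}
using the assumption $\langle\z,\w\rangle=0$ (and hence $\langle\w,\z\rangle=\overline{\langle\z,\w\rangle}=0$). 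Since both diagonal entries are $\le 0$, this $2\times 2$ Hermitian form is negative semidefinite, so its inertia is $(0,n_-',n_0')$ with $n_-'+n_0'=2$. The main structural fact I would invoke is that the inertia of a Hermitian form restricted to a subspace cannot have more negative (or zero-or-negative) directions than the ambient form: more precisely, an ambient form of signature $(n,1)$ cannot contain a $2$-dimensional subspace on which it is negative semidefinite, because that would force at least two mutually orthogonal directions along which the form is $\le 0$, contradicting $n_-=1$. I would make this precise by extending to a basis and applying the standard interlacing/Sylvester's law of inertia argument: if $W$ were such a $2$-dimensional subspace, diagonalizing $\langle\cdot,\cdot\rangle$ on $W$ and then extending to a diagonalizing basis of all of $\bh^{n,1}$ (possible because one can always complete an orthogonal set to an orthogonal basis when the form is nondegenerate, cf. the argument in Proposition \ref{orthcomp}) would produce a diagonal form with at least two non-positive entries among which at most one can be $0$ only if... — carefully, the count of $-1$'s plus $0$'s would be at least $2$ while the ambient form has exactly one $-1$ and no $0$'s, a contradiction.

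Carrying this out, the key steps in order are: (1) assume for contradiction $\langle\z,\w\rangle=0$ and $\w\notin\z\bh$; (2) observe $\{\z,\w\}$ is then quaternionic-linearly independent, so $W=\z\bh+\w\bh$ is $2$-dimensional; (3) compute the Gram matrix of $\langle\cdot,\cdot\rangle|_W$ in the basis $\{\z,\w\}$ and note it is diagonal with both diagonal entries $\le 0$, hence $\langle\cdot,\cdot\rangle|_W$ is negative semidefinite; (4) invoke Sylvester's law of inertia for quaternionic Hermitian forms (available since, as recalled just before this proposition, any $A\in\mathrm{H}_n(\bh)$ is $*$-congruent to a diagonal matrix with entries in $\{+1,-1,0\}$, with well-defined counts $n_+,n_-,n_0$) together with the ambient signature $i(J)=(n,1,0)$; (5) conclude that a $2$-dimensional negative semidefinite subspace forces $n_-+n_0\ge 2$, contradicting $n_-=1,\ n_0=0$.

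I expect the main obstacle to be making step (4)–(5) rigorous in the quaternionic setting, specifically the claim that a Hermitian form of signature $(n,1,0)$ admits no $2$-dimensional subspace on which it is negative semidefinite. Over $\br$ or $\bc$ this is immediate from the min-max characterization of signature, but over $\bh$ one must be a little careful, since the spectral theory is by right eigenvalues and "diagonalization" means $*$-congruence rather than unitary conjugation. The cleanest route is probably the following: if $\langle\cdot,\cdot\rangle|_W$ is negative semidefinite and nonzero, pick $\bu\in W$ with $\langle\bu,\bu\rangle<0$ (if the form on $W$ were identically zero, then $W$ is totally isotropic of dimension $2$, impossible in signature $(n,1)$ since the maximal isotropic dimension is $\min(n_+,n_-)=1$); consider $\bu^\perp$, which by Proposition \ref{orthcomp}(i) is positive definite of quaternionic dimension $n$; but $W\cap\bu^\perp$ is at least $1$-dimensional (dimension count in the $(n+1)$-dimensional space: $\dim W+\dim\bu^\perp=2+n>n+1$), and any nonzero vector there lies in $\bu^\perp\subset V_+$, yet also lies in $W$ where $\langle\cdot,\cdot\rangle\le 0$ — contradiction. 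This reduces everything to Proposition \ref{orthcomp}, which is already established, so modulo the dimension-count bookkeeping the argument should close cleanly.
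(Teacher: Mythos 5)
Your argument is correct, but note that the paper does not actually prove this proposition: it is imported by citation from \cite{caogd16}, and the nearest in-paper substitute is Proposition \ref{orthcomp}, whose proof runs along a different, more computational line. There, the author normalizes $\z$ to an explicit standard vector (e.g.\ $\z=(1,0,\dots,0,1)^T$ in the null case), writes out $\z^{\perp}$ in coordinates, and reads off that every non-positive vector of $\z^{\perp}$ lies in $\z\bh$ --- which yields the proposition essentially by inspection. Your route is coordinate-free: you pass to $W=\z\bh\oplus\w\bh$, observe the restricted form is negative semidefinite, and derive a contradiction from a dimension count against $\bu^{\perp}$ for a negative vector $\bu$, falling back on the Witt-index bound in the totally isotropic case. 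The coordinate computation buys concreteness and immediately generalizes to describe all of $\z^{\perp}$ (which the paper needs anyway); your inertia argument buys independence from any choice of model and makes transparent exactly which feature of the signature $(n,1)$ is being used. Two small caveats. First, your initial heuristic ``the count of $-1$'s plus $0$'s in a subspace cannot exceed that of the ambient form'' is false as stated (restriction to a subspace can create zero directions even when the ambient form is nondegenerate --- a null line in $\bh^{1,1}$ already shows this); but you flag this yourself and your final argument correctly avoids it, using only $n_{+}'\le n_{+}$ implicitly via the positive definiteness of $\bu^{\perp}$. Second, Proposition \ref{orthcomp}(i), which you invoke, is in the case $\z\in V_-$ logically equivalent to the proposition being proved; there is no circularity because that case has the independent one-line proof via the orthogonal splitting $\bh^{n,1}=\bu\bh\oplus\bu^{\perp}$ and additivity of inertia (Proposition \ref{pro-inertia}), but it would be cleaner to cite that splitting directly rather than Proposition \ref{orthcomp}.
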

	
\begin{prop}\label{containneg}
		Let  $\mathfrak{p}=(p_1,\cdots,p_m)$ be an  $m$-tuple of pairwise distinct points in $\partial {\bf H}_{\bh}^n$ with lift  $\p=(\p_{1},\cdots, \p_{m})$ and $m\geq 2$.   Then  $G(\p)$ has a negative eigenvalue.
\end{prop}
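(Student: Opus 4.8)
The plan is to argue by contradiction and reduce the whole matter to the $2\times2$ principal minor on coordinates $1,2$. First I would record two structural facts about $G(\p)=(g_{ij})$. Since each $p_i$ lies in $\bhq=\bp(V_0)$, every lift satisfies $\langle\p_i,\p_i\rangle=0$, so $g_{ii}=\langle\p_i,\p_i\rangle=0$ for all $i$; thus $G(\p)$ has vanishing diagonal. Since the $p_i$ are pairwise distinct in $\bh\bp^n$, for $i\neq j$ the vector $\p_i$ is not a right scalar multiple of $\p_j$, and as $\langle\p_i,\p_i\rangle\le0$ and $\langle\p_j,\p_j\rangle\le0$, Proposition \ref{prop-1.1} forces $g_{ij}=\langle\p_j,\p_i\rangle\neq0$. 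In particular $G(\p)\neq0$, which already rules out $i(G(\p))=(0,0,m)$.

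Next, suppose for contradiction that $G(\p)$ has no negative eigenvalue, that is, $i(G(\p))=(n_+,0,n_0)$. Using the diagonalization recalled before Proposition \ref{prop-1.1}, choose an invertible $B$ with $B^*G(\p)B=D:=\mathrm{diag}(1,\dots,1,0,\dots,0)$ (with $n_+$ ones), and set $C=B^{-1}$, so that $G(\p)=C^*DC$. Then for every $\mathbf{x}\in\bh^m$,
\[
\mathbf{x}^*G(\p)\mathbf{x}=(C\mathbf{x})^*D(C\mathbf{x})=\sum_{k=1}^{n_+}\bigl|(C\mathbf{x})_k\bigr|^2\ \ge\ 0 .
\]
(The quantity $\mathbf{x}^*G(\p)\mathbf{x}$ is a real number because $G(\p)$ is Hermitian.) So $G(\p)$ would be positive semi-definite.

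To contradict this I would test against $\mathbf{x}$ supported on the first two coordinates, with $x_1=1$ and $x_2=-\overline{g_{12}}/|g_{12}|$. Using $g_{11}=g_{22}=0$ and $g_{21}=\overline{g_{12}}$, one computes (recall $x_1=1$)
\[
\mathbf{x}^*G(\p)\mathbf{x}=\bar x_1g_{12}x_2+\overline{\bar x_1g_{12}x_2}=2\Re\!\bigl(g_{12}\cdot(-\overline{g_{12}}/|g_{12}|)\bigr)=-2|g_{12}|<0 ,
\]
a contradiction; hence $G(\p)$ has a negative eigenvalue. Equivalently and more conceptually, the principal submatrix of $G(\p)$ on coordinates $\{1,2\}$ is the $2\times2$ Hermitian matrix with zero diagonal and off-diagonal entry $g_{12}\neq0$, which is not positive semi-definite, whereas every principal submatrix of a positive semi-definite Hermitian matrix is positive semi-definite.

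The argument is essentially immediate once the two structural observations are in place; indeed the content is already present when $m=2$, and the hypothesis $m\ge2$ is exactly what guarantees the existence of an off-diagonal entry $g_{12}$. The one point deserving (routine) care, rather than presenting any real obstacle, is the behaviour of the quaternionic Hermitian inertia: that $n_-=0$ is equivalent to $\mathbf{x}^*A\mathbf{x}\ge0$ for all $\mathbf{x}$, and that this number is real — both follow at once from $A=A^*$ together with the diagonalization quoted in the text.
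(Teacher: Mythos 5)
Your proof is correct and follows essentially the same route as the paper's: both assume no negative eigenvalue, diagonalize to get positive semi-definiteness, invoke Proposition \ref{prop-1.1} to see $g_{12}=\langle\p_2,\p_1\rangle\neq 0$, and then test the form on a vector supported on the first two coordinates (your $x_2=-\overline{g_{12}}/|g_{12}|$ is just the normalized version of the paper's $\mu=-\langle\p_1,\p_2\rangle$). No gaps; the only difference is cosmetic.
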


\begin{proof}
	Let $\q=\p_1+\p_2\mu$ with  $\mu=-\langle\p_1,\p_2 \rangle$.  By Proposition \ref{prop-1.1},  \begin{equation}\label{negel}\langle\q,\q \rangle=-2|\langle\p_1,\p_2 \rangle|<0.\end{equation}
	Suppose that the eigenvalues of $G(\p)$ are all non-negative. Then there exists an invertible matrix $S\in \bh_{m,m}$ such that $$ S^*G(\p)S={\rm diag}(1,\cdots,1,0,\cdots,0).$$
	Then $x^*S^*\p^*J\p Sx\geq 0, \forall x\in \bh^m$.  This contradicts  (\ref{negel})  with $x=S^{-1}l$ and $l=(1,\mu,0,\cdots,0)^T\in \bh^m$.
\end{proof}

The following proposition is obvious.
\begin{prop}\label{pro-inertia}
	Let $S$ be an invertible matrix. Then $i(A)=i(S^*AS)$.  Furthermore  assume that  $S^*AS=\left(
	\begin{array}{cc}
	A_1 & 0\\
	0 & A_2
	\end{array}
	\right)$. Then  $$i(A)=i(A_1)+i(A_2).$$
\end{prop}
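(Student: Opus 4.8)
The plan is to reduce both assertions to the structure theory of quaternionic Hermitian matrices recalled just above: any $A\in{\rm H}_n(\bh)$ is congruent to a diagonal matrix with entries in $\{+1,-1,0\}$, and the multiplicities $(n_+,n_-,n_0)=i(A)$ of $+1$, $-1$, $0$ do not depend on the congruence used. Granting this (and that $A$ is Hermitian, without which $i(A)$ is undefined), I would first dispatch $i(A)=i(S^*AS)$, and then obtain additivity by block-diagonalizing each summand.

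For the first claim I would pick an invertible $B$ with $B^*AB=D$ diagonal, where $D$ carries $n_+$ entries $+1$, $n_-$ entries $-1$ and $n_0$ entries $0$, and then observe that $C:=S^{-1}B$ is invertible and
\begin{equation*}
C^*(S^*AS)C=B^*(S^{-1})^*S^*A\,SS^{-1}B=B^*AB=D .
\end{equation*}
Thus $A$ and $S^*AS$ are congruent to the \emph{same} $D$, so by well-definedness of the signature $i(S^*AS)=i(D)=i(A)$. A self-contained variant I could insert in place of invoking well-definedness: $n_+$ is the largest quaternionic dimension of a right subspace $W\subset\bh^{n}$ on which $\w\mapsto\w^*A\w$ (which is real, since $A=A^*$) is positive definite, and $n_-$ likewise with ``negative''; replacing $A$ by $S^*AS$ amounts to $\w\mapsto S\w$, a linear bijection of $\bh^{n}$ carrying subspaces to subspaces, so these numbers are unchanged.

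For additivity, by the first claim it suffices to treat $i\!\left(\begin{array}{cc}A_1&0\\0&A_2\end{array}\right)=i(A_1)+i(A_2)$. I would diagonalize each block, choosing invertible $B_t$ with $B_t^*A_tB_t=D_t$ a diagonal matrix with entries in $\{+1,-1,0\}$, so $i(A_t)=i(D_t)$ for $t=1,2$. With the invertible matrix $B=\left(\begin{array}{cc}B_1&0\\0&B_2\end{array}\right)$,
\begin{equation*}
B^*\left(\begin{array}{cc}A_1&0\\0&A_2\end{array}\right)B=\left(\begin{array}{cc}D_1&0\\0&D_2\end{array}\right),
\end{equation*}
and the right-hand side is itself a diagonal matrix with entries in $\{+1,-1,0\}$ whose multiplicities of $+1$, $-1$, $0$ are the termwise sums of those of $D_1$ and $D_2$. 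Hence $i\!\left(\begin{array}{cc}A_1&0\\0&A_2\end{array}\right)=i(D_1)+i(D_2)=i(A_1)+i(A_2)$, and combining with $i(A)=i(S^*AS)$ finishes the proof.

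I do not expect a genuine obstacle; this is why the statement can fairly be called obvious. The only point deserving a word of care is the well-definedness of $i(\cdot)$ over $\bh$ — independence of the diagonalizing matrix — which is part of the quaternionic Sylvester law being cited and is in any case covered by the subspace description above; beyond that one need only remember to work with right $\bh$-vector spaces throughout and to use that $\w^*A\w\in\br$ for Hermitian $A$.
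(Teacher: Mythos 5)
Your proof is correct. The paper offers no argument at all here — it simply declares the proposition obvious after recalling that every quaternionic Hermitian matrix is congruent to a $\{+1,-1,0\}$ diagonal matrix — and your write-up is exactly the standard filling-in of that claim (congruence is an equivalence relation, the signature is a congruence invariant by the quaternionic Sylvester law or by the maximal-positive-subspace characterization you give, and block-diagonal congruences add signatures), so there is nothing to compare beyond noting that you have supplied the details the author omitted.
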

Let $\p=(\p_1,\cdots,\p_l)$ and $\q=(\q_1,\cdots,\q_t)$  such that  $\langle \p_i, \q_j\rangle=0$ for all $i,j$.
Then
\begin{equation}\label{orthcount}
(\p,\q)^*J(\p,\q)=\left(
\begin{array}{cc}
G(\p) & 0\\
0 &G(\q)
\end{array}
\right).
\end{equation}

We can now prove the following crucial  result.
\begin{thm}\label{thm-inertia}
	Let $\p=(\p_1,\cdots, \p_m)\in  \bh_{n+1,m}$,  $V={\rm span}\{\p_1,\cdots, \p_m\}$
	and  $$\dim_qV=k+1,\ i(G(\p))=i(\p^*J\p)=(n_+,n_-,n_0).$$ Then $$k\leq n_++n_-\leq k+1,n_+\le  n, \ n_-\le 1 , \  n_++n_-+n_0=m.$$
	In particular, we have the following statements.
		\begin{itemize}
		\item[(1)]  If $\p_i\in V_0, i=1,\cdots,m$	 then $n_+=k,\ n_-=1$.
		
				\item[(2)]    If $\p_i\in V_+,i=1,\cdots,m $	 then there are three cases:
		\begin{itemize}
			\item[(i)]  $n_+=k,\ n_-=1$,   in this case  $V$ is hyperbolic;
			
			\item[(ii)]  $n_+=k+1,\ n_-=0$,   in this case  $V$ is elliptic;
			\item[(iii)]$n_+=k,\ n_-=0$,   in this case  $V$ is parabolic.
		\end{itemize}
	\end{itemize}
\end{thm}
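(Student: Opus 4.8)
The plan is to reduce everything to the scalar inequalities $k\le n_++n_-\le k+1$, $n_+\le n$, $n_-\le 1$, $n_++n_-+n_0=m$, and then read off the special cases. First I would dispose of the trivial identity $n_++n_-+n_0=m$: by definition $G(\p)$ is an $m\times m$ Hermitian matrix, and by the quaternionic Sylvester law of inertia recalled before Proposition \ref{prop-1.1} its signature $(n_+,n_-,n_0)$ satisfies $n_++n_-+n_0=m$. Next, for the upper bound $n_++n_-\le k+1$: choose a basis $\p_{i_1},\dots,\p_{i_{k+1}}$ of $V$ among the columns of $\p$; then every column of $\p$ is a right-$\bh$-linear combination of these, so $\p=\p' A$ for some $(n+1)\times(k+1)$ matrix $\p'$ of rank $k+1$ and some $(k+1)\times m$ matrix $A$, whence $G(\p)=A^*G(\p')A$ and $\operatorname{rank}G(\p)\le k+1$, i.e. $n_++n_-\le k+1$. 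The bounds $n_+\le n$ and $n_-\le 1$ come from the ambient signature: $\p$ is an $(n+1)\times m$ matrix and $G(\p)=\p^*J\p$ with $J=J_b$ of signature $(n,1)$; writing $\p^*J\p=(\p^*)J(\p)$ and using that conjugation by the non-square matrix $\p$ cannot increase the number of positive (resp. negative) directions beyond those available in $J$, one gets $n_+\le n$, $n_-\le 1$. (Concretely: if $n_-\ge 2$ there is a $2$-dimensional subspace $W\subset \bh^m$ with $x^*G(\p)x<0$ on $W\setminus\{0\}$, hence a $2$-dimensional subspace $\p W\subset\bh^{n,1}$ — injectivity holds since $G(\p)$ is negative definite on $W$ — on which $\langle\cdot,\cdot\rangle$ is negative definite, contradicting that $J_b$ has only one negative direction; same argument with $n$ and positive directions.)

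For the lower bound $n_++n_-\ge k$, equivalently $n_0\le m-k$, I would argue via the radical of the form. Let $N=\{x\in\bh^m: G(\p)x=0\}$; then $\dim_q N=n_0$. I claim $\p N\subseteq V\cap V^{\perp}$, where $V^{\perp}$ is the orthogonal complement of $V$ in $\bh^{n,1}$: indeed for $x\in N$ and any $y\in\bh^m$ we have $\langle \p x,\p y\rangle = (\p y)^*J(\p x)=y^*G(\p)x=0$, and since the columns of $\p$ span $V$ this says $\p x\perp V$; obviously $\p x\in V$. Now $V\cap V^{\perp}$ is the radical of the restricted form on $V$, and the restricted form, living on a $(k+1)$-dimensional space, has a radical of dimension $\le k+1-(\text{rank of restricted form})$. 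But more directly: the map $x\mapsto \p x$ sends $\bh^m$ onto $V$ with kernel of dimension $m-(k+1)$; its restriction to $N$ has image inside the radical $R:=V\cap V^\perp$ of $V$, so $\dim_q N \le (m-k-1)+\dim_q R$. Since $R\subseteq\bh^{n,1}$ is totally isotropic for a form of signature $(n,1)$, the standard bound gives $\dim_q R\le 1$. Hence $n_0=\dim_q N\le m-k$, i.e. $n_++n_-\ge k$, and moreover $n_++n_-=k$ forces $\dim_q R=1$ while $n_++n_-=k+1$ forces $R=0$ (the restricted form on $V$ is nondegenerate).

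With the general inequalities in hand, the itemized cases follow quickly. In case (1), all $\p_i\in V_0$: by Proposition \ref{containneg}, $G(\p)$ has a negative eigenvalue, so $n_-\ge 1$, hence $n_-=1$; then $n_++1\in\{k,k+1\}$, and I must rule out $n_++n_-=k+1$. If it were, the form on $V$ would be nondegenerate of signature $(n_+,1)$ with $n_+=k$; but $V$ is spanned by isotropic vectors $\p_i$, and in the Siegel picture (or directly) a nondegenerate $(k,1)$ subspace spanned by null vectors still contains null vectors — that is consistent — so instead I would argue: the vectors $\p_i$ are isotropic and pairwise non-proportional, so by Proposition \ref{prop-1.1} all off-diagonal entries $g_{ij}=\langle\p_j,\p_i\rangle$ are nonzero while all diagonal entries vanish; such a Hermitian matrix with zero diagonal and the column space constraint forces the radical to be nontrivial precisely when... here I would instead invoke the cleaner fact that $V\cap V_-\ne\emptyset$ whenever $m\ge 2$ (from $\langle\q,\q\rangle<0$ in the proof of Proposition \ref{containneg}), so $V$ is not elliptic, and that $V$ spanned by null vectors cannot be positive-definite-plus-one either unless it contains a null vector in its would-be-$V_-$ part — concluding $n_++n_-=k$. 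For case (2), $\p_i\in V_+$: the three sub-cases are exactly the three a priori possibilities left by $k\le n_++n_-\le k+1$ and $n_-\le 1$, namely $(n_+,n_-)=(k,1)$, $(k+1,0)$, $(k,0)$; the labels hyperbolic/elliptic/parabolic are just the definitions of the signature type of the subspace $V$ (matching the terminology set up in the introduction, with "parabolic" being the case $n_++n_-=k$, i.e. $V$ degenerate). I expect the main obstacle to be the careful noncommutative bookkeeping in the rank/signature estimates — ensuring that "$G(\p)=A^*G(\p')A$ with $A$ of full row rank implies equal signature" and the totally-isotropic-subspace dimension bound both survive the passage to $\bh$ — since, as the introduction warns, rank and inertia arguments must be handled with care over the quaternions; everything else is assembling Propositions \ref{pro-inertia}, \ref{prop-1.1} and \ref{containneg}.
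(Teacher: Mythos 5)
Your treatment of the general inequalities is sound and in places takes a genuinely different route from the paper: the upper bound $n_++n_-\le k+1$ via $G(\p)=A^*G(\p')A$ is the same congruence reduction the paper uses, but your lower bound via the radical $N=\ker G(\p)$, the containment $\p N\subseteq V\cap V^{\perp}$, and the bound $\dim_q(V\cap V^{\perp})\le 1$ for a totally isotropic subspace of a signature-$(n,1)$ form is cleaner and more uniform than the paper's, which obtains $k\le n_++n_-$ (together with $n_+\le n$, $n_-\le 1$) only as a byproduct of an explicit trichotomy on whether $V$ meets $V_-$, meets $V_0$ but not $V_-$, or lies in $V_+$, decomposing $V$ into orthogonal lines in each case.

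However, case (1) is wrong as written, and this is a genuine error rather than a presentational one. You set out to ``rule out $n_++n_-=k+1$'' and end by ``concluding $n_++n_-=k$''; but the theorem asserts $n_+=k$, $n_-=1$, i.e.\ $n_++n_-=k+1$: when all the $\p_i$ are null the restriction of the form to $V$ is \emph{nondegenerate}. The correct argument is the one you brush past: by Proposition \ref{prop-1.1} the vector $\q=\p_1+\p_2\mu$ with $\mu=-\langle\p_1,\p_2\rangle$ satisfies $\langle\q,\q\rangle<0$, so $V\cap V_-\neq\emptyset$; taking $\z\in V\cap V_-$ one has $V=\z\bh\oplus(\z^{\perp}\cap V)$ with $\z^{\perp}\subset V_+$ by Proposition \ref{orthcomp}(i), hence $\z^{\perp}\cap V$ carries $k$ mutually orthogonal positive lines and $i(G(\p))=(k,1,m-k-1)$. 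A two-point sanity check: $\p_1=(1,1)^T$, $\p_2=(1,-1)^T$ in $\bh^{1,1}$ with $J_b$ gives $G=\left(\begin{smallmatrix}0&2\\2&0\end{smallmatrix}\right)$ of signature $(1,1,0)$ and $k=1$, so $n_++n_-=k+1$, not $k$. Separately, in case (2) you list $(k,1)$, $(k+1,0)$, $(k,0)$ as ``exactly the three a priori possibilities,'' but the numerical constraints alone also permit $(n_+,n_-)=(k-1,1)$; excluding it requires the same decomposition argument (if $n_-=1$ then $V\cap V_-\neq\emptyset$, and the splitting $V=\z\bh\oplus(\z^{\perp}\cap V)$ forces $n_+=k$). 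Both gaps are repaired by the paper's trichotomy, which you already half-invoke.
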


\begin{proof}
	Let  $t=k+1$. Without loss of generality, we assume that $\p_1,\cdots, \p_t$ are linearly independent  and $$\p_j=\p_1\lambda_{1j}+\cdots+\p_t\lambda_{tj}, j=t+1,\cdots,m.$$
	Let $\q=(\p_1,\cdots, \p_t)\in  \bh_{n+1,t}$. Then $\p=\q(I_t,\Lambda)$, where $\Lambda=(\lambda_{ij}), i=1,\cdots t, j=t+1,\cdots,m$. Let $S=\left(
	\begin{array}{cc}
	I_t & -\Lambda\\
	0 & I_{m-t}
	\end{array}
	\right).$ Direction computation shows that   $$S^*G(\p)S=S^*\p^*J\p S=\left(
	\begin{array}{cc}
	\q^*J\q & 0\\
	0 & 0
	\end{array}
	\right).$$
	Therefore, by Proposition \ref{pro-inertia}  we have that  $$i(\p^*J\p)=i(\q^*J\q).$$  This implies that $ n_++n_-\leq k+1$.

	If  $V\cap V_-\neq \emptyset$ then there exists a   $\z\in V_-$ such that $V=\z\bh\oplus (\z^{\perp}\cap V)$.  In the  space $\z^{\perp}\cap V$  there exist $k$ mutually  orthogonal  positive lines $\q_1,\cdots,\q_k$ such that $V={\rm span}\{\z,\q_1,\cdots, \q_k\}$.  By (\ref{orthcount}) we have  $n_+=k,\ n_-=1$ and  $V$ is   hyperbolic in this case.
	
	 By Proposition \ref{containneg},  a space with two different  null lines must contain negative lines.   If  $V\cap V_-=\emptyset$  and $V\cap V_0\neq \emptyset$  then there exists a unique  $\z\bh\in V_0$.	  The space $\z^{\perp}\cap V$ contains only $k$   mutually  orthogonal    positive lines $\q_1,\cdots,\q_k$.  In this case  $n_+=k,\ n_-=0$  and  $V$ is    parabolic.
	
	If  $V\subset V_+$, then $V$ contains $k+1$   mutually  orthogonal  positive lines $\q_1,\cdots,\q_{k+1}$.   In this case  $n_+=k+1,\ n_-=0$  and  $V$ is    elliptic.

It follows from  Proposition \ref{orthcomp} and \ref{containneg} that  the statements of (1) and (2) hold.
\end{proof}

\begin{rem}\label{limdim}
Since any $m$-tuple  $\p=(\p_1,\cdots, \p_m)$ in $\F^{n,1}$ span a space $V={\rm span}\{\p_1,\cdots, \p_m\}$ which  is definitely  contained in  a copy of $\F^{m,1}$.  In other words, there exists a  $g\in \spn$ such that $g(V)\subset \F^{m,1}\hookrightarrow\F^{n,1}$. So if one consider the moduli problem of points in $\F\bp^n$, it is enough to  assume that $n\leq m$. Furthermore, for moduli problem of points in $\overline{{\bf H}_\F^n}$, one can further assume that $n\leq m-1$.
\end{rem}

\section{Moduli problem on $\bp(V_0)$}\label{sec3mv0}
In this section, we will  consider the moduli problem on $\bp(V_0)$ for $m >4$.
The application of rotation-normalized algorithm is fully described. This method will be  mimicked conceptually  to more complicated cases in Sections \ref{sec6ireg} and \ref{sec7reg}.

\subsection{Semi-normalized Gram matrix}
We recall the following definition in  \cite{apakim07,caogd16}.
\begin{dfn} \label{car32 angularh} The {\it quaternionic Cartan's angular  invariant} of a triple  $\fp=(p_1,p_2,p_3)$   of pairwise distinct points in $\overline{{\bf H}_{\bh}^n}$ is the  angular invariant $\ba_{\bh}(\fp)$,  $0 \leq \ba_{\bh}(\fp)\leq \frac{\pi}{2}$,    given by
	\begin{equation} \label{angular} \ba_{\bh}(\fp)=\ba_{\bh}(p_1,p_2,p_3):=\arccos \frac{\Re(-\langle \p_1, \p_2, \p_3\rangle)}{|\langle \p_1, \p_2, \p_3\rangle|},\end{equation}
	where $\p_1,\p_2,\p_3$ are lifts of $p_1, p_2, p_3$, respectively.
\end{dfn}

\begin{prop}\label{normprocessn}
	Let  $\mathfrak{p}=(p_1,\cdots,p_m)$ be an  $m$-tuple of pairwise distinct points in $\partial {\bf H}_{\bh}^n$.  Then the equivalence class of Gram matrices associated to $\mathfrak{p}$ contains a matrix $G= (g_{ij})$ with
	$$g_{ii} = 0,\ i=1,\cdots, m,\   g_{i-1,i}=1, i=2,\cdots, m, g_{13}=-e^{-{\bf i}\ba},$$  where $\ba=\ba_{\bh}((p_1,p_2,p_3))$.
\end{prop}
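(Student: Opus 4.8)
The plan is to run the \emph{rotation-normalized algorithm} in two passes: a recursive pass that makes every sub-diagonal entry equal to $1$, and then a second pass that uses the remaining gauge freedom to move $g_{13}$ onto $-e^{-\bi\ba}$. Fix an arbitrary lift $\p=(\p_1,\dots,\p_m)$ of $\fp$ and put $G_0=G(\p)$. By (\ref{isolift1}) every lift of $\fp$ has Gram matrix $D^*G_0D$ with $D=\mathrm{diag}(\lambda_1,\dots,\lambda_m)$, $\lambda_i\in\bh\setminus\{0\}$, so it suffices to produce such a $D$. Since each $\p_i\in V_0$ we have $g_{ii}=\langle\p_i,\p_i\rangle=0$; since the $p_i$ are pairwise distinct, no two of the $\p_i$ are proportional, so Proposition \ref{prop-1.1} gives $g_{ij}=\langle\p_j,\p_i\rangle\neq 0$ for all $i\neq j$. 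Set $\lambda_1=1$ and, recursively, $\lambda_i=(\overline{\lambda_{i-1}}\,g_{i-1,i})^{-1}$; this is well defined because $g_{i-1,i}\neq 0$ and $\lambda_{i-1}\neq 0$. For the resulting $D$ one gets $(D^*G_0D)_{i-1,i}=\overline{\lambda_{i-1}}\,g_{i-1,i}\,\lambda_i=1$ while the diagonal stays zero; write $G=D^*G_0D=(g_{ij})$ and $q:=g_{13}\in\bh\setminus\{0\}$.

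The second pass uses the residual gauge. A diagonal $D'=\mathrm{diag}(\mu_1,\dots,\mu_m)$, $\mu_i\in\bh\setminus\{0\}$, preserves both $g'_{ii}=0$ and $g'_{i-1,i}=1$ exactly when $\overline{\mu_{i-1}}\mu_i=1$ for every $i$, that is, $\mu_i=\overline{\mu_{i-1}}^{-1}$; iterating forces $\mu_{2k-1}=\mu_1$ and $\mu_{2k}=\overline{\mu_1}^{-1}$ for an arbitrary $\mu_1\in\bh\setminus\{0\}$. (This strict alternation is precisely the phenomenon that makes the rotation-normalized algorithm necessary, and it is forced by the noncommutativity of $\bh$.) Since the indices $1$ and $3$ are both odd, such a $D'$ acts on the one remaining free entry by $(D'^*GD')_{13}=\overline{\mu_1}\,q\,\mu_1$. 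Writing $\mu_1=t\nu$ with $t=|q|^{-1/2}>0$ and $\nu\in\mathrm{Sp}(1)$, this equals $t^2\,\nu^{-1}q\,\nu=\nu^{-1}(q/|q|)\,\nu$, a unit quaternion conjugate to $q/|q|$.

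It remains to pick $\nu$ so that $\nu^{-1}(q/|q|)\nu=-e^{-\bi\ba}$. Because $G$ comes from a lift with $g_{12}=g_{23}=1$, hence also $\langle\p_1,\p_2\rangle=\langle\p_2,\p_3\rangle=1$, the triple product of $p_1,p_2,p_3$ formed with this lift reduces to $g_{13}$ (or $\overline{g_{13}}$), which has real part $\Re(q)$ and modulus $|q|$; so Definition \ref{car32 angularh} gives $\Re(q)/|q|=-\cos\ba$. Thus $q/|q|$ and $-e^{-\bi\ba}=-\cos\ba+\bi\sin\ba$ are unit quaternions with the same real part (and, since $0\le\ba\le\pi/2$, the same imaginary norm $\sin\ba$); by Proposition \ref{so3} any two imaginary quaternions of equal norm lie in a single $\mathrm{SO}(3)$-orbit, so there is $\nu\in\mathrm{Sp}(1)$ with $\nu^{-1}(q/|q|)\nu=-e^{-\bi\ba}$ (the degenerate cases $\ba=0,\pi/2$ being immediate). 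Then $D'D$ is the required diagonal matrix and $G(\p D'D)$ has the stated form.

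I expect the main obstacle to be the noncommutative bookkeeping in the second paragraph: establishing that the constraints $g_{i-1,i}=1$ force the alternation $\mu_1,\overline{\mu_1}^{-1},\mu_1,\dots$ and that the induced action on $g_{13}$ is the twisted conjugation $q\mapsto\overline{\mu_1}q\mu_1$. Once that is in hand, matching $q$ to the Cartan angular invariant via Definition \ref{car32 angularh} is routine.
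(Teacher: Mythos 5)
Your proof is correct and follows essentially the same two-pass strategy as the paper: first rescale $\p_2,\dots,\p_m$ to make the subdiagonal entries equal to $1$, then exploit the residual parity-alternating gauge freedom $\mu_{2k-1}=\mu_1$, $\mu_{2k}=\overline{\mu_1}^{-1}$ to send $g_{13}$ to $-e^{-\bi\ba}$. The only cosmetic difference is that you justify the existence of the final rotation abstractly (two unit quaternions with equal real parts are $\mathrm{Sp}(1)$-conjugate), whereas the paper produces it explicitly via the formula (\ref{findnu}), which it reuses later for the rotation-normalized algorithm.
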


\begin{proof}  Let $\p=(\p_{1},\cdots, \p_{m})$ be an arbitrary lift of $\mathfrak{p}$.   We want to obtain  a diagonal matrix $D$  such that  $G(\p D)$  is the desired Gram matrix.

	Note that $\langle \p_{i}, \p_{j}\rangle\neq 0$ for $i\neq j$.   Firstly we  obtain the solutions  $\lambda_i, i=2,\cdots,m$   of  the equations below:
	\begin{equation}\label{findlambda} \langle {\bf p}_{1}, \ {\bf p}_{2}\lambda_2\rangle=1,\ \langle {\bf p}_{2} \lambda_2, \ {\bf p}_{3}\lambda_3\rangle=1, \cdots,  \langle {\bf p}_{m-1}\lambda_{m-1}, {\bf p}_{m}\lambda_m\rangle =1. \end{equation}
		Next,  by (\ref{findnu}) we  let \begin{equation}\label{flam1}\lambda_1=\frac{\nu( \langle {\bf p}_{1}, {\bf p}_{3}\lambda_3\rangle)}{\sqrt{| \langle {\bf p}_{1}, {\bf p}_{3}\lambda_3\rangle |}}=\frac{\nu(\langle {\bf p}_{2}, {\bf p}_{1}\rangle \langle {\bf p}_{2}, {\bf p}_{3}\rangle^{-1} \langle {\bf p}_{1}, {\bf p}_{3}\rangle)}{\sqrt{|\langle {\bf p}_{2}, {\bf p}_{1}\rangle \langle {\bf p}_{2}, {\bf p}_{3}\rangle^{-1} \langle {\bf p}_{1}, {\bf p}_{3}\rangle|}}.\end{equation}
	By the property of  quaternionic Cartan's angular  invariant,  $\langle {\bf p}_{1} \lambda_1, \ {\bf p}_{3}\lambda_3\lambda_1\rangle$ is a unit complex with negative real part and therefore
 $$\langle \p_{1}\lambda_1 , \ \p_{3}\lambda_3\lambda_1\rangle=-e^{-{\bf i}\ba}.$$
	Let $\mu_1=\lambda_1$; for $i\geq 2$,  $\mu_i=\lambda_i \lambda_1$ when $i$ is odd,  and $\mu_i=\lambda_i \overline{\lambda_1}^{-1}$ when $i$ is even.  Then  $G(\p D)$  is the desired Gram matrix with  $$D={\rm diag}(\mu_1,\cdots, \mu_m).$$
\end{proof}

\begin{dfn}
	The Gram matrix  $G$ as in Proposition \ref{normprocessn} of the form
	\begin{equation}\label{semimatrix}
	G({\bf n})=(g_{ij})=\left(
	\begin{array}{cccccc}
	0 & 1 & g_{13} & g_{14} & \cdots & g_{1m} \\
	1 & 0 & 1 & g_{24}&\cdots & g_{2m} \\
	\overline{g_{13}} & 1 & 0 & 1&\cdots & g_{3m}  \\
	\overline{g_{14}} & \overline{g_{24}} & 1 &  \ddots & \ddots &  \vdots\\
	\vdots &  \vdots  &  \vdots  &  \ddots & 0 &  1 \\
	\overline{g_{1m}} & \overline{g_{2m}} & \overline{g_{3m}} & \cdots& 1 & 0\\
	\end{array}
	\right)
	\end{equation}
	is called  the {\it  semi-normalized Gram matrix}.
\end{dfn}

\begin{prop}(\cite[Theorems 2.1, 2.2]{cungus12})\label{prop-condgn}
	Let $G= (g_{ij})$ be  a  Hermitian $m\times m$-matrix, $m>2$  with
	$$g_{ii} = 0,\ i=1,\cdots, m,\   g_{i-1,i}=1, i=2,\cdots, m, g_{13}=-e^{-{\bf i}\ba},$$   where $\ba\in[0,\pi/2]$.  Let $i(G)=(n_+,n_-,n_0)$.
	Then $G$ is a  semi-normalized Gram matrix  associated with some ordered  $m$-tuple $\fpm$
	of pairwise distinct isotropic points in $\bhq$  if and  only if
	\begin{equation}\label{inercond}n_+\le  n, \ n_-=1 , \  n_++n_-+n_0=m.\end{equation}
\end{prop}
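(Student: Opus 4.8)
The plan is to treat the two implications separately; the ``only if'' part is short, while the ``if'' part carries the real content.

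\textbf{Only if.} Suppose $G$ is the semi-normalized Gram matrix of an $m$-tuple $\fpm$ of pairwise distinct isotropic points of $\bhq$ with lift $\p=(\p_1,\cdots,\p_m)$, and put $V={\rm span}\{\p_1,\cdots,\p_m\}$, $\dim_q V=k+1$. I would note that $k+1\le n+1$ since $V\subseteq\bh^{n,1}$, and then apply Theorem \ref{thm-inertia}(1) (all the $\p_i$ lie in $V_0$) to obtain $i(G)=(k,1,m-k-1)$. This immediately gives $n_+=k\le n$, $n_-=1$ and $n_++n_-+n_0=m$, which is precisely (\ref{inercond}). (That $n_-\ge 1$ could also be obtained directly from Proposition \ref{containneg}.)

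\textbf{If.} Here the core step is to realize a given $G$, with $i(G)=(n_+,1,n_0)$ and $n_+\le n$, as a Gram matrix $G=\p^*J_b\p$ of vectors in $\bh^{n,1}$. By Sylvester's law for quaternionic Hermitian matrices (recalled in Section \ref{sec2.3ine}), together with Proposition \ref{pro-inertia}, there is an invertible $S\in\bh_{m,m}$ with $S^*GS={\rm diag}(I_{n_+},-1,0_{n_0})$ ($n_+$ ones, one $-1$, and $n_0$ zeros). Since $n_+\le n$, the matrix $Q=(\e_1,\cdots,\e_{n_+},\e_{n+1},0,\cdots,0)\in\bh_{n+1,m}$, whose last $n_0$ columns are zero, satisfies $Q^*J_bQ={\rm diag}(I_{n_+},-1,0_{n_0})$ by the definition of $J_b$. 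Hence $\p:=QS^{-1}$ obeys $\p^*J_b\p=(S^{-1})^*(S^*GS)S^{-1}=G$, so $G$ is the Gram matrix of the columns $\p_1,\cdots,\p_m$ of $\p$, which lie in a copy of $\bh^{n_+,1}$ inside $\bh^{n,1}$. (Since $\ba\in[0,\pi/2]$, one also checks from (\ref{angular}) that $\ba=\ba_{\bh}(p_1,p_2,p_3)$, so $G$ is indeed in semi-normalized form.)

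Then I would verify that $\p$ represents $m$ pairwise distinct isotropic points of $\bhq$. Each $\p_i$ is isotropic since $g_{ii}=0$, and $\p_i\ne 0$ since $g_{i-1,i}=1$ (respectively $g_{12}=1$ for $i=1$) forces $\langle\p_i,\p_{i-1}\rangle\ne 0$; so each $p_i=\bp(\p_i)$ lies in $\bhq$. Moreover, by Proposition \ref{prop-1.1} two isotropic lines coincide precisely when the Hermitian product of their lifts vanishes, so $p_i=p_j$ with $i\ne j$ would force $g_{ij}=0$; hence the $p_i$ are pairwise distinct as soon as all off-diagonal entries of $G$ are nonzero. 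I expect this last point to deserve the most attention: the non-vanishing of the off-diagonal entries is part of what it means for $G$ to be the semi-normalized Gram matrix of a tuple of \emph{distinct} points (distinct isotropic lines having nonzero Hermitian product), but it is not implied by the normalizing conditions $g_{ii}=0$, $g_{i-1,i}=1$, $g_{13}=-e^{-\bi\ba}$ in isolation, so it must be carried along explicitly; once it is, the realization above closes the argument.
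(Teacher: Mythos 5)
Your proof is correct and follows essentially the same route as the paper: Theorem \ref{thm-inertia} for the ``only if'' direction, and Sylvester diagonalization $S^*GS={\rm diag}(I_{n_+},-1,0_{n_0})$ followed by $\p=QS^{-1}$ for the ``if'' direction. You are, however, more careful than the paper on two points, and both are worth noting. First, the paper's realizing matrix $A$ puts the entry $-1$ at position $(n_++1,n_++1)$, so its $(n_++1)$-th column is $-\e_{n_++1}$, which has $\langle -\e_{n_++1},-\e_{n_++1}\rangle=+1$ under $J_b$ unless $n_+=n$; your choice of $\e_{n+1}$ for that column is the correct fix. Second, your closing worry about distinctness is well founded: the paper's proof stops at ``$\p=AS^{-1}$ is the desired lift'' without checking that the resulting isotropic points are pairwise distinct, and the normalizing conditions together with (\ref{inercond}) do \emph{not} force the off-diagonal entries to be nonzero. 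For instance, with $m=4$, $\ba=0$ and $g_{14}=0$, $g_{24}=-1$ one gets a real Hermitian matrix of signature $(2,1,1)$ satisfying all stated hypotheses, yet any realization would have $\langle\p_4,\p_1\rangle=0$ with both vectors isotropic and nonzero, forcing $p_1=p_4$ by Proposition \ref{prop-1.1}. So the hypothesis $g_{ij}\neq 0$ for $i\neq j$ must indeed be carried explicitly (as it is, in effect, in the cited source), and your argument that this condition, once assumed, yields pairwise distinctness is exactly the missing verification.
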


\begin{proof}
	Suppose that  $G$ is a  semi-normalized Gram matrix  associated with some ordered  $m$-tuple $\fpm$ of  pairwise distinct isotropic points in $\bhq$.   It follows from  Theorem \ref{thm-inertia} that $n_+\le  n, \ n_-=1 , \  n_++n_-+n_0=m$.
	
	Conversely,  suppose that  $G= (g_{ij})$ is  of the form (\ref{semimatrix}) with
	$$i(G)=(n_+,1,m-n_+-1).$$
	There exists an invertible matrix $S$ such that $S^*GS=B$,
	where $B$ is the diagonal $m \times m$ matrix
	with $b_{ii} = 1$ for $1 \leq i\leq  n_+, b_{ii} =-1$ for $i = n_+ + 1$, and $b_{ij} = 0$ for all other indices.  Now let $A = (a_{ij})$  be the $(n + 1) \times m$-matrix such that  $a_{ii} = 1$  for $1 \leq i\leq  n_+, a_{ii} =-1$ for $i = n_+ +1$, and $a_{ij}= 0$  for all other indices.  Then  $A^*JA=B=S^*GS$,  which implies that $$(S^*)^{-1}A^*JAS^{-1}=G.$$   Then  $\p=AS^{-1}$ is the desired lift of $\fpm$  to get the semi-normalized Gram matrix $G$.
\end{proof}

\subsection{The parameter  space of  moduli space}\label{submpv0}
The following lemma shows that a  semi-normalized Gram matrix is just an equivalent class, and also indicates the necessity of   performing  rotation-normalized algorithm.
\begin{lem}\label{lem-actsemi}
	Suppose that  the Gram matrix  $G(\p )$  is a semi-normalized Gram matrix for  $\p=(\p_{1},\cdots, \p_{m})$.   Then   $G(\p D)$  is  still a semi-normalized Gram matrix  with  $D={\rm diag}(\mu_1, \cdots, \mu_m)$  if  only if
	$$  D=\mu I_m={\rm diag}(\mu, \cdots, \mu),  \mu e^{-{\bf i}\ba}=e^{-{\bf i}\ba}\mu,   \mu\in {\rm Sp}(1).$$
\end{lem}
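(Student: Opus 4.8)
The plan is to directly compute the effect of the diagonal change of lift $\p \mapsto \p D$ on the entries of the semi-normalized Gram matrix and read off exactly which $D$ preserve the normalized form. Writing $G(\p) = (g_{ij})$ and $G(\p D) = (g_{ij}')$, formula (\ref{isolift1}) gives $g_{ij}' = \bar\mu_i g_{ij}\mu_j$. The normalized form (\ref{semimatrix}) requires $g_{ii}'=0$ (automatic, since $g_{ii}=0$), $g_{i-1,i}'=1$ for $i=2,\dots,m$, and $g_{13}'=-e^{-{\bf i}\ba}$. So the constraints I must exploit are
\begin{equation*}
\bar\mu_{i-1}\,\mu_i = 1 \quad (i=2,\dots,m), \qquad \bar\mu_1\,(-e^{-{\bf i}\ba})\,\mu_3 = -e^{-{\bf i}\ba}.
\end{equation*}

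First I would handle the sub/super-diagonal relations $\bar\mu_{i-1}\mu_i=1$. Taking modulus gives $|\mu_{i-1}||\mu_i|=1$; since I also need these to be consistent with a valid lift, the cleanest route is to observe that $\mu_i = \bar\mu_{i-1}^{-1} = \mu_{i-1}/|\mu_{i-1}|^2$, and chaining this down from $i=2$ expresses every $\mu_i$ in terms of $\mu_1$. One finds $\mu_i = \mu_1/|\mu_1|^2$ for even $i$ and $\mu_i=\mu_1$ for odd $i$ (using $\bar\mu^{-1} = \mu/|\mu|^2$ and $\overline{\mu/|\mu|^2} = \bar\mu/|\mu|^2$, whose inverse is $\mu$). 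Actually it is more transparent to note that $\bar\mu_1\mu_2=1$ forces $\mu_2 = \bar\mu_1^{-1}$, then $\bar\mu_2\mu_3=1$ forces $\mu_3 = \bar\mu_2^{-1} = \mu_1$, and inductively the odd-indexed $\mu_i$ all equal $\mu_1$ while the even-indexed ones all equal $\bar\mu_1^{-1}$.

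Next I would feed the two facts $\mu_3 = \mu_1$ into the remaining constraint $\bar\mu_1(-e^{-{\bf i}\ba})\mu_3 = -e^{-{\bf i}\ba}$, which becomes $\bar\mu_1 e^{-{\bf i}\ba}\mu_1 = e^{-{\bf i}\ba}$, i.e. $e^{-{\bf i}\ba}\mu_1 = \mu_1 e^{-{\bf i}\ba}$. Now I still need to pin down $|\mu_1|$. For this I would invoke that a semi-normalized Gram matrix of an $m$-tuple of isotropic points has, by Proposition \ref{prop-condgn} (or directly Theorem \ref{thm-inertia}), a genuine lift; but more simply, I can use that $G(\p)$ must remain Hermitian with the prescribed real/unit entries — the entry $g_{24}'$ say is unconstrained, so that does not help, but the key is that $\mu_2 = \bar\mu_1^{-1}$ combined with $\mu_4 = \bar\mu_3^{-1} = \bar\mu_1^{-1}$ and the relation $\bar\mu_3\mu_4 = 1$ is automatically consistent for any $|\mu_1|$. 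So I actually need one more genuine normalization condition: the diagonal entries being $0$ and the superdiagonal being $1$ do NOT by themselves force $|\mu_1|=1$. Re-examining (\ref{semimatrix}): the matrix is pinned down only up to these listed entries, so to force $|\mu_1|=1$ I should use that $\nu^{-1}\mu_1$-type argument as in Lemma \ref{lemtonorm}, or — cleaner — note $\bar\mu_1\mu_2 = 1$ and $\bar\mu_1 e^{-{\bf i}\ba}\mu_1 = e^{-{\bf i}\ba}$ together with $e^{-{\bf i}\ba}$ having modulus $1$ give $|\mu_1|^2 |e^{-{\bf i}\ba}| = \cdots$, hmm that is still $|\mu_1|^2\cdot 1$ on each side. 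The honest resolution: $\bar\mu_1 e^{-{\bf i}\ba}\mu_1 = e^{-{\bf i}\ba}$ forces $|\mu_1|^2 = 1$ by taking modulus, since $|\bar\mu_1 e^{-{\bf i}\ba}\mu_1| = |\mu_1|^2$ and $|e^{-{\bf i}\ba}| = 1$. That is the needed step. Hence $\mu_1 \in {\rm Sp}(1)$, all $\mu_i = \mu_1 =: \mu$, so $D = \mu I_m$, and $\mu e^{-{\bf i}\ba} = e^{-{\bf i}\ba}\mu$.

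For the converse, if $D = \mu I_m$ with $\mu\in{\rm Sp}(1)$ commuting with $e^{-{\bf i}\ba}$, then $g_{ij}' = \bar\mu g_{ij}\mu$, which fixes $0$, fixes $1$ (as $\bar\mu\mu = 1$), and fixes $-e^{-{\bf i}\ba}$ by the commutation hypothesis; so $G(\p D)$ is again semi-normalized. The main obstacle I anticipate is exactly the bookkeeping that gets $|\mu_1| = 1$ rather than merely $|\mu_{i-1}||\mu_i| = 1$: one must be careful that the superdiagonal-plus-diagonal data alone leaves a scaling ambiguity, and it is the single genuinely off-diagonal pinned entry $g_{13} = -e^{-{\bf i}\ba}$ (together with the parity relations linking $\mu_3$ back to $\mu_1$) that both kills the modulus freedom and produces the commutation condition. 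I would also remark that when $\ba = 0$ the condition $\mu e^{-{\bf i}\ba} = e^{-{\bf i}\ba}\mu$ is vacuous, so $D$ ranges over all of ${\rm Sp}(1)\cdot I_m$, whereas for $\ba\in(0,\pi/2]$ it restricts $\mu$ to the circle $\{e^{{\bf i}\theta}\}$, the centralizer of ${\bf i}$ in ${\rm Sp}(1)$ — this is precisely the residual indeterminacy that the rotation-normalized algorithm must still remove.
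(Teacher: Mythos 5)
Your proof is correct and follows essentially the same route as the paper's: the superdiagonal conditions $\bar\mu_{i-1}\mu_i=1$ force the odd-indexed $\mu_i$ to equal $\mu_1$ and the even-indexed ones to equal $\bar\mu_1^{-1}$, and the pinned entry $g_{13}=-e^{-{\bf i}\ba}$ then yields $|\mu_1|=1$ (by taking moduli) together with the commutation relation, collapsing all $\mu_i$ to a single $\mu\in{\rm Sp}(1)$. If anything, you are more careful than the paper at the one delicate point — that the diagonal-plus-superdiagonal data alone leave a modulus ambiguity which only the $g_{13}$ condition removes — and you also supply the easy converse, which the paper leaves implicit.
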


\begin{proof}
	It follows from $$\langle \p_{i-1}\mu_{i-1}, \ \p_{i}\mu_i\rangle=1, \  i=2,\cdots,m$$ that
 all those   $\mu_i$ with  $i$ odd   are equal,  and so do  for all  those $\mu_i$ with  $i$ even.
The fact $\langle \p_{1}\mu_1, \ \p_{3}\mu_3\rangle=-e^{-{\bf i}\ba}$ implies $\mu_1=\mu_3$.  Hence
	 $\mu_1=\mu_2=\cdots=\mu_m:=\mu$ and  $\mu e^{-{\bf i}\ba}=e^{-{\bf i}\ba}\mu $.
\end{proof}

Set $t=\frac{(m-1)(m-2)}{2}$.
We can represent a semi-normalized Gram matrix by a $t$-vector:
\begin{equation}\label{gtovect}v_G=(g_{13}, g_{14},g_{24}, \cdots, g_{1m},\cdots, g_{m-2,m}).\end{equation}
Also we represent \begin{equation}\label{2bijections}G=G(v_G).\end{equation}  Recall that  two Hermitian matrices $H$ and $\tilde{H}$ are equivalent if there exists a diagonal matrix $D$ such that $\tilde{H} = D^*HD$ (see \cite{caogd16,cungus12}).
By Lemma \ref{lem-actsemi}, we  obtain the following result.
\begin{lem}\label{lemvector}
	Let $G$  and $\tilde{G}$ be two semi-normalized Gram matrices represented by $V(G)$ and $V(\tilde{G})$.   Then $\tilde{G}$ and $G$  are equivalent  if and only if \begin{equation}\label{eqrelation}{\bf O}_{v_G}= {\bf O}_{v_{\tilde{G}}}.\end{equation}
\end{lem}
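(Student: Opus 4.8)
The plan is to reduce the claimed equivalence to a direct comparison of the two defining changes of lift and then invoke Lemma \ref{lem-actsemi}. First I would prove the ``if'' direction. Suppose ${\bf O}_{v_G}={\bf O}_{v_{\tilde G}}$, so that there is a unit quaternion $\mu$ with $v_{\tilde G}=\mu v_G\mu^{-1}=\bar\mu v_G\mu$, entrywise. Since $G$ is reconstructed from $v_G$ by formula (\ref{2bijections}) together with the fixed pattern (zeros on the diagonal, ones on the sub/super-diagonal, and $g_{13}=-e^{-{\bf i}\ba}$), conjugating every free entry of $G$ by $\mu$ conjugates every entry of $G$ by $\mu$ provided $\mu$ also fixes the rigid entries; the $0$'s and $1$'s are central and hence automatically fixed, so the only constraint is $\bar\mu e^{-{\bf i}\ba}\mu = e^{-{\bf i}\ba}$, i.e. $\mu$ commutes with $e^{-{\bf i}\ba}$. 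This is exactly the hypothesis under which Lemma \ref{lem-actsemi} applies: with $D=\mu I_m$ we get $\tilde G = D^* G D$, so $G$ and $\tilde G$ are equivalent. The subtlety is that I must ensure $\mu$ can be chosen to commute with $e^{-{\bf i}\ba}$; when $\ba\ne 0$ this is automatic because conjugation must send the complex number $g_{13}=-e^{-{\bf i}\ba}$ (an entry that is literally fixed in both normalized forms) to itself, forcing $\bar\mu(-e^{-{\bf i}\ba})\mu=-e^{-{\bf i}\ba}$; when $\ba=0$ the entry is real and every $\mu$ commutes with it, so there is nothing to check.

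For the ``only if'' direction, suppose $\tilde G=D^*GD$ for some diagonal $D={\rm diag}(\mu_1,\dots,\mu_m)$. Since both $G$ and $\tilde G$ are semi-normalized Gram matrices, Lemma \ref{lem-actsemi} forces $D=\mu I_m$ with $\mu\in{\rm Sp}(1)$ and $\mu e^{-{\bf i}\ba}=e^{-{\bf i}\ba}\mu$. Then for every free entry $g_{ij}$ we have $\tilde g_{ij}=\bar\mu g_{ij}\mu$, which says precisely $v_{\tilde G}=\bar\mu v_G\mu$, hence $v_{\tilde G}\in{\bf O}_{v_G}$ and therefore ${\bf O}_{v_G}={\bf O}_{v_{\tilde G}}$ (the orbits under ${\rm Sp}(1)/\pm1$ coincide as soon as one representative of one lies in the other).

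The main obstacle I anticipate is purely bookkeeping rather than conceptual: I must be careful that the rigid entries (the $1$'s on the off-diagonal and the $0$'s on the diagonal) remain unchanged under the common conjugation. Since real scalars are central in $\bh$, $\bar\mu\cdot 1\cdot\mu=1$ and $\bar\mu\cdot 0\cdot\mu=0$ for every unit $\mu$, so this causes no trouble; the genuinely constrained entry is the single non-real fixed entry $g_{13}=-e^{-{\bf i}\ba}$, and the commutation condition in Lemma \ref{lem-actsemi} is exactly what keeps it fixed. Thus the proof amounts to transporting the statement of Lemma \ref{lem-actsemi} through the two mutually inverse correspondences $G\leftrightarrow v_G$ of (\ref{gtovect}) and (\ref{2bijections}), and observing that ``$D$ acts on $G$ by simultaneous conjugation by $\mu$'' translates verbatim into ``$\mu$ acts on $v_G$ by the diagonal ${\rm Sp}(1)$-action defining ${\bf O}_{v_G}$.''
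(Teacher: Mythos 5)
Your proof is correct and takes the same route as the paper: the paper obtains this lemma as an immediate consequence of Lemma \ref{lem-actsemi}, and your argument is precisely the detailed transport of that lemma through the correspondence $G\leftrightarrow v_G$ of (\ref{gtovect}) and (\ref{2bijections}), with the rigid entries handled by centrality of reals and the entry $g_{13}$ by the commutation condition. The only point you gloss slightly is why $\tilde g_{13}=g_{13}$ in the ``if'' direction; it follows because conjugation preserves real parts, so the two normalized angles in $[0,\pi/2]$ must coincide.
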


 From this,  Proposition \ref{prop-condgn} can be reformulated  as follows.

\begin{prop}\label{prop-condv}
	Let  $v=(v_1,\cdots, v_t)$ with  $v_1=-e^{-{\bf i}\ba}, \ba\in[0,\pi/2]$.   Let $i(G(v))=(n_+,n_-,n_0)$.
	Then $G(v)$ is a  semi-normalized Gram matrix  associated with some ordered  $m$-tuple $\fpm$
	of distinct isotropic points in $\bhq$  if and  only if
	\begin{equation}\label{inercondv}n_+\le  n, \ n_-=1 , \  n_++n_-+n_0=m.\end{equation}
\end{prop}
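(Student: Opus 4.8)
The plan is to read this off from Proposition \ref{prop-condgn} through the dictionary between semi-normalized Gram matrices and $t$-vectors recorded in (\ref{gtovect})--(\ref{2bijections}); no new ideas are needed here, only the verification that this dictionary is a genuine bijection respecting all the relevant data.

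First I would spell out the bijection. Given a $t$-vector $v=(v_1,\cdots,v_t)$ with $v_1=-e^{-\bi\ba}$, $\ba\in[0,\pi/2]$, the matrix $G(v)$ is the unique Hermitian $m\times m$ matrix of the shape (\ref{semimatrix}): its diagonal entries are $0$, its superdiagonal entries $g_{i-1,i}$ are $1$, the remaining super-diagonal entries $g_{ij}$ with $j\ge i+2$ are filled, in the order displayed in (\ref{gtovect}), by $v_1=g_{13},v_2=g_{14},v_3=g_{24},\cdots$, and the sub-diagonal entries are then forced by Hermiticity. Conversely, every Hermitian matrix $G$ with $g_{ii}=0$, $g_{i-1,i}=1$ and $g_{13}=-e^{-\bi\ba}$ is of this form, so $v\mapsto G(v)$ is a bijection onto exactly the class of matrices appearing in Proposition \ref{prop-condgn}. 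Under this bijection $i(G(v))$ is by definition the inertia of the Hermitian matrix $G(v)$, and the hypothesis $v_1=-e^{-\bi\ba}$ matches $g_{13}=-e^{-\bi\ba}$; moreover, by the definition of semi-normalized Gram matrix together with Proposition \ref{normprocessn}, the phrase ``$G(v)$ is a semi-normalized Gram matrix associated with some ordered $m$-tuple $\fpm$ of pairwise distinct isotropic points in $\bhq$'' is identical, verbatim, to the property characterised in Proposition \ref{prop-condgn} for the matrix $G=G(v)$.

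It then remains only to invoke Proposition \ref{prop-condgn} with $G=G(v)$: it asserts that $G(v)$ is such a semi-normalized Gram matrix if and only if $n_+\le n$, $n_-=1$ and $n_++n_-+n_0=m$, which is precisely condition (\ref{inercondv}). This finishes the argument. The only points that require any care --- and the closest thing to an ``obstacle'' --- are purely bookkeeping: checking that the encoding (\ref{gtovect}) really is onto the matrices of the form (\ref{semimatrix}), and that passing from $G$ to $G(v)$ changes neither the inertia nor the status of being a semi-normalized Gram matrix of an isotropic $m$-tuple. One should also note in passing that the constraint $m>2$ in Proposition \ref{prop-condgn} is automatic here, since $t=\frac{(m-1)(m-2)}{2}\ge 1$ forces $m\ge 3$.
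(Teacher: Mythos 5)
Your proposal is correct and matches the paper exactly: the paper offers no separate proof of Proposition \ref{prop-condv}, stating only that it is Proposition \ref{prop-condgn} ``reformulated'' via the encoding (\ref{gtovect})--(\ref{2bijections}), which is precisely the dictionary argument you spell out. The extra bookkeeping you supply (surjectivity of the encoding, preservation of inertia, and the automatic bound $m>2$) is a harmless elaboration of the same reduction.
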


\begin{dfn}
	$$V(n,m)=\{v=(v_1,\cdots,v_t):  i(G(v))=(n_+,n_-,n_0) \ \mbox{with} \  n_+\le  n, \ n_-=1  \}.$$
\end{dfn}

By Lemma \ref{lemvector} there is  an equivalent relation in $V(n,m)$ defined by  (\ref{eqrelation}).
Therefore the configuration space $\mathcal{M}(n,m;\partial{\bf H}_\bc^n)$  can be thought of as the quotient
of $V(n,m)$ under this equivalent relation.  That is  $$\mathcal{M}(n,m;\partial{\bf H}_\bc^n)=V(n,m) /\simeq.$$
Based on this observation, we are ready to construct the  parameter space $\bm(n,m)$  for $V(n,m) /\simeq$ with rotation-normalized algorithm.   We mainly rely on  Lemma \ref{lemtonorm} to execute rotation-normalized algorithm.

This procedure can be described conceptually as follows:

In  case  $\ba=0$, or equivalently, $- e^{-{\bf i}\ba}=-1$, we basically need to find two entries  $v_i$ and $v_j$  in $v\in  V(n,m)$   with  $\Im(v_i)$ and $\Im(v_j)$ being linearly  independent  to specific the parameters for its representing equivalent class, whilst only a quaternion in $\bh-\bc$   in the case of $\ba\neq 0$.

  The above conceptual description  is a motivation of  the definition of the following sets.

Let $$\br^{2+}=\{v\in \bh:v=x_0+x_1\bi+x_2\bj, x_2>0\}, \br^{1+}=\{v\in \bc:v=x_0+x_1\bi, x_1>0\}.$$
\begin{dfn}
	We define the following sets. $$P(\bc)=\{v\in V(n,m):  v_1\notin \br, v_i\in \bc,\  \mbox{for}\ i=2,\cdots,t\};$$
	 $$P(j)=\{v\in V(n,m):  v_1\notin \br, v_i\in \bc,\  \mbox{for}\  i<j, v_j\in\br^{2+}\}, j=2,\cdots,t;$$
		$$Z(\br)=\{v\in V(n,m):  v_i\in \br\   \mbox{for}\ i=1,\cdots,t\};$$
	$$Z(\bc,i)=\{v\in V(n,m):   v_t\in \br,\  \mbox{for}\  t<i, v_i\in\br^{1+}\}, i=2,\cdots,t;$$
	$$Z(i,j)=\{v\in V(n,m):  v_t\in \br, t<i,  v_i\in \br^{1+};\  v_t\in \bc, t<j, v_j\in\br^{2+}\},   j=2,\cdots,t, 2\leq i<j .$$
\end{dfn}
We remark that the sets defined above is roughly divided by  two cases:  $\ba\neq 0$ and $\ba=0$ . Each case is refined  according to the positions  in which  Lemma \ref{lemtonorm} acts.    Roughly speaking,  such a $Z(i,j)$ looks like
$$Z(i,j)=(\underbrace{-1,\br^*,\cdots,\br^*}_{i-1},\br^{1+}, \underbrace{\bc^*,\cdots,\bc^*}_{j-i-1},\br^{2+},\bh^*,\cdots,\bh^*).$$
Let $$P(n,m)=P(\bc)\cup P(j), Z(n,m)=Z(\br)\cup Z(\bc,j)\cup Z(i,j)$$
and $$\bm(n,m)=P(n,m)\cup Z(n,m).$$

\begin{thm}\label{thmparam}
	$\bm(n,m)$ is a parameter space of $V(n,m)/\simeq$.
\end{thm}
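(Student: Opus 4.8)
The plan is to show that $\bm(n,m)$ meets every equivalence class in $V(n,m)/\simeq$ exactly once; equivalently, that the natural map $\bm(n,m)\hookrightarrow V(n,m)\to V(n,m)/\simeq$ is a bijection. Since each $v\in V(n,m)$ is a valid parameter vector for a semi-normalized Gram matrix $G(v)$ (Proposition \ref{prop-condv}), and two such vectors are equivalent if and only if ${\bf O}_{v}={\bf O}_{v'}$ (Lemma \ref{lemvector}), it suffices to show: (i) for every $v\in V(n,m)$ there is a unique $\mu\in{\rm Sp}(1)/\pm1$ with $\mu v\mu^{-1}\in\bm(n,m)$; and (ii) distinct elements of $\bm(n,m)$ lie in distinct orbits. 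Both $P(n,m)$ and $Z(n,m)$ are designed so that their constituent pieces ($P(\bc)$, the $P(j)$, $Z(\br)$, the $Z(\bc,j)$, the $Z(i,j)$) are pairwise disjoint and jointly exhaust all orbit-representatives, so (ii) will follow once I pin down the normal form in (i).

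First I would split on whether $\ba=0$. If $\ba\neq 0$, then $v_1=-e^{-\bi\ba}\in\bc\setminus\br$, so $\Im(v_1)$ is a nonzero vector along the $\bi$-axis; I then scan $v_2,\dots,v_t$ for the first index $j$ such that $\Im(v_j)$ is linearly independent from $\Im(v_1)$ (as vectors in $\Im(\bh)\cong\br^3$). If some such $j$ exists, Lemma \ref{lemtonorm} (applied to the ordered pair of imaginary parts $v_1\mapsto\bi$-axis, $v_j\mapsto\bi\bj$-plane) produces a \emph{unique} $\mu\in{\rm Sp}(1)/\pm1$ rotating $v$ so that $v_1$ stays in $\bc$, every $v_i$ with $i<j$ lands in $\bc$ (their imaginary parts being $\br$-multiples of that of $v_1$), and $v_j$ lands in the open half-plane $\br^{2+}$; this puts the rotated vector in $P(j)$, and the uniqueness of $\mu$ in Lemma \ref{lemtonorm} gives uniqueness of the representative. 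If no such $j$ exists — all imaginary parts are proportional to that of $v_1$ — then a unique $\mu$ (the one normalizing $v_1$, using \eqref{findnu}) sends every entry into $\bc$, landing in $P(\bc)$; here the residual ambiguity is a rotation fixing the $\bi$-axis, i.e. an $e^{\bi\beta}$, and since $v_1\notin\br$ this already forces $\mu=\pm1$, so the representative is unique. The case $\ba=0$ is the mirror image: now $v_1=-1\in\br$ carries no directional information, so I scan for the first index $i$ with $\Im(v_i)\neq0$, rotate it into $\br^{1+}$ (a unique $\mu$ up to the stabilizer of the $\bi$-axis), then among the remaining entries scan for the first index $j$ whose imaginary part is independent of the new $\bi$-direction and rotate it into $\br^{2+}$ by the leftover freedom; the outcomes are respectively $Z(\br)$ (no nonzero imaginary part at all), $Z(\bc,i)$ (all imaginary parts proportional), and $Z(i,j)$ (two independent directions), again with uniqueness from Lemma \ref{lemtonorm}.

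The main obstacle is the uniqueness/well-definedness bookkeeping rather than any single hard computation: one must check that the residual stabilizer after each normalization step is exactly $\{\pm1\}$ (so that the representative is genuinely unique, not merely unique up to a torus), and that the half-plane conventions $\br^{1+},\br^{2+}$ correctly kill the last sign ambiguity without creating overlaps between the pieces. Concretely, after rotating $v_i$ into $\br^{1+}$ the stabilizer is $\{e^{\bi\beta}\}$, and after further rotating $v_j$ into $\br^{2+}$ it collapses to $\{\pm1\}$; one must verify that the $\br^{2+}$ condition (strict positivity of the $\bj$-coordinate) is what distinguishes $\mu$ from $-\mu$, exactly as in the proof of Lemma \ref{lemtonorm}. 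I would also check the edge cases where $j$ or $i$ could equal $t$ (the scanned index is the last one) so that the ``block'' of $\bc$-entries or $\bh$-entries after it is empty, and confirm that every $v\in V(n,m)$ falls into one and only one of the named subsets — i.e. that $P(n,m)$ and $Z(n,m)$ partition the image. Granting Lemma \ref{lemtonorm}, Proposition \ref{prop-condv}, and Lemma \ref{lemvector}, these verifications are routine, and assembling them yields that $\bm(n,m)$ is a (set-theoretic) parameter space for $V(n,m)/\simeq$.
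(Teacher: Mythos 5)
Your proposal is correct and follows essentially the same route as the paper: split on $\ba=0$ versus $\ba\neq 0$, scan for the first entry whose imaginary part supplies a new independent direction, and invoke Lemma \ref{lemtonorm} (via the explicit formulas \eqref{findnu} and \eqref{findmu}) to pin down a unique orbit representative in the appropriate piece $P(\bc)$, $P(j)$, $Z(\br)$, $Z(\bc,i)$ or $Z(i,j)$. The only caveat is a phrasing slip: in the degenerate pieces (e.g.\ $P(\bc)$) the normalizing $\mu$ itself is not unique --- a whole circle of $\mu$'s fixes the vector --- but the resulting representative is, which is all the theorem needs and which your argument in substance establishes.
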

\begin{proof}
	Let $v=(v_1,\cdots,v_t)\in V(n,m)$, where $v_1=-e^{-{\bf i}\ba}$.  We define a map
	\begin{equation}\label{eqcoordinate}  \psi:{\bf O}_v\in V(n,m)/\simeq\to \bm(n,m) \end{equation}
	by the following steps:
	
	The equivalent class ${\bf O}_v$ with  $\ba\neq 0$  will be mapped to an element in $P(n,m)$.  It is obvious that  $\bar{\mu}v\mu\in  V(n,m)$ if and only if  $\mu\in U(1)$.   If  all  entries of $v$ are complex numbers,  then   ${\bf O}_v$ is represented by $v$ itself.  Equivalently, the parameter of ${\bf O}_v$ assigned by $\psi$ in $\bm(n,m)$ is $v$ which  belongs to $\in P(\bc)$. Otherwise, let  $j$  be the  smallest index among entries of $v$  such that  $v_j\in \bh-\bc$.   Let $\mu=\mu(\Im(v_1),\Im(v_j))$  given by (\ref{findmu}).  Therefore  ${\bf O}_v$ is assigned to the parameter  $\bar{\mu}v\mu$, which belongs to $P(j)$.
	
The equivalent class ${\bf O}_v$ with  $\ba=0$  belongs  to $Z(n,m)$.  More precisely, if  all  entries of $v$ are reals,  then   ${\bf O}_v$ is represented by $v$ itself  belonging to $Z(\br)$.   We  divide the remainder into two cases.  If  all  entries of $v$ are complex numbers  with  $i$  being the  smallest index  such that  $v_i\in \bc-\br$.   Let $\mu=\nu(\Im(v_i))$  be given by (\ref{findnu}).   Then we assign  ${\bf O}_v$ to  $\bar{\mu}v\mu$, which belongs to $Z(\bc,j)$.
	For the latter case, let $i$  be the  smallest index such that  $v_i\in \bc-\br$ and $j$  the  smallest index such that  $v_j\in \bh-\bc$.  Let $\mu=\mu(\Im(v_i),\Im(v_j))$.   Then we assign  ${\bf O}_v$ to  $\bar{\mu}v\mu$, which belongs to $Z(i,j)$.
	
	By Lemma \ref{lemtonorm} and  the construction of $P(n,m)$ and $Z(n,m)$ above, the map $\psi$ is bijection.
	Therefore $\bm(n,m)$ is a parameter space of $V(n,m)/\simeq$.
\end{proof}
\begin{thm}\label{mainthm}
	The configuration space $\mathcal{M}(n,m;\partial{\bf H}_\bc^n)$ is homeomorphic to $\bm(n,m)$
\end{thm}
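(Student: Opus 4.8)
The plan is to show that the bijection $\psi$ from Theorem~\ref{thmparam} is in fact a homeomorphism, so that the chain of identifications $\mathcal{M}(n,m;\partial{\bf H}_\bc^n) = V(n,m)/\simeq\ \cong \bm(n,m)$ upgrades from a set-theoretic bijection to a homeomorphism. First I would recall the topological picture: the space of $m$-tuples of pairwise distinct isotropic points carries the subspace topology from $(\partial{\bf H}_\bh^n)^m$, the configuration space $\mathcal{M}(n,m;\partial{\bf H}_\bh^n)$ carries the quotient topology, and by Proposition~\ref{normprocessn} together with Proposition~\ref{prop-condgn} the assignment $\fp\mapsto v_{G(\p)}$ (for a suitable lift) realizes $\mathcal{M}(n,m;\partial{\bf H}_\bh^n)$ as $V(n,m)/\simeq$, where $V(n,m)\subset \bh^t$ has the subspace topology and $\simeq$ is the relation generated by the $\mathrm{Sp}(1)/\pm1$-action $v\mapsto \bar\mu v\mu$. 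I would first verify that this intermediate identification is a homeomorphism: the semi-normalization of Proposition~\ref{normprocessn} depends continuously on the lift (it is built from the entries $\langle\p_i,\p_j\rangle$ by the explicit rational/root formulae~(\ref{findlambda})--(\ref{flam1}) and~(\ref{findnu})), and conversely the lift $\p=AS^{-1}$ produced in the proof of Proposition~\ref{prop-condgn} depends continuously on $G$; hence both directions between $\mathcal{M}$ and $V(n,m)/\simeq$ are continuous.

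Next I would analyze $\psi\colon V(n,m)/\simeq\ \to \bm(n,m)$ directly. Give $\bm(n,m)$ the subspace topology from $\bh^t$. The composite $V(n,m)\to V(n,m)/\simeq\ \xrightarrow{\psi}\bm(n,m)$ sends $v$ to $\bar\mu(v)\,v\,\mu(v)$, where the normalizing rotation $\mu(v)$ is chosen by the case analysis in the proof of Theorem~\ref{thmparam}: in each stratum it is either the identity, or $\nu(\Im(v_i))$ from~(\ref{findnu}), or $\mu(\Im(v_i),\Im(v_j))$ from~(\ref{findmu}), with $i,j$ the first indices landing in prescribed subsets ($\bc\setminus\br$, resp. $\bh\setminus\bc$). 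Since $V(n,m)/\simeq$ carries the quotient topology, $\psi$ is continuous if and only if $v\mapsto \bar\mu(v)v\mu(v)$ is continuous on $V(n,m)$; and $\psi^{-1}$ is continuous if and only if the quotient map $V(n,m)\to V(n,m)/\simeq$ restricted along the inclusion $\bm(n,m)\hookrightarrow V(n,m)$ is open onto its image, equivalently that $\bm(n,m)$ is a genuine (topological, not just set-theoretic) section. So the task reduces to two local statements: (a) continuity of the normalization map $v\mapsto\bar\mu(v)v\mu(v)$, and (b) the inverse direction, i.e. that the natural map $\bm(n,m)\to V(n,m)/\simeq$ is a homeomorphism onto.

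For (b) the key observation is that $\psi^{-1}$ is simply the composite $\bm(n,m)\hookrightarrow V(n,m)\to V(n,m)/\simeq$, which is automatically continuous (inclusion followed by quotient projection). So in fact only (a) requires work, and even there one reduces to the continuity of $\psi$ being equivalent to continuity of $\psi$ on each of the finitely many pieces glued along their boundaries — but that is exactly where the subtlety lies, because the pieces $P(\bc), P(j), Z(\br), Z(\bc,i), Z(i,j)$ are \emph{not} open in $V(n,m)$ and the index $j$ (the first coordinate leaving $\bc$) jumps as $v$ crosses a wall. I would handle this by showing that $\psi$ is continuous as a map into $\bh^t$ without reference to the stratification: on the open locus where $\ba\neq 0$ and all entries are complex, $\mu=1$; as an entry $v_j$ approaches $\bc$ from $\bh\setminus\bc$, the formula~(\ref{findmu}) for $\mu(\Im(v_1),\Im(v_j))$ degenerates but $\bar\mu v\mu$ extends continuously to the limiting value $v$ itself (this is precisely the uniqueness clause of Lemma~\ref{lemtonorm}, which forces the normalized output to be independent of the choices made to reach it). I would make this rigorous by checking that whenever $v^{(k)}\to v$ in $V(n,m)$, the normalized representatives $\psi(v^{(k)})$ subconverge, and any subsequential limit is $\simeq$-equivalent to $v$ and lies in the closure of the appropriate stratum — hence equals $\psi(v)$ by uniqueness of the normal form.

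The hard part will be exactly this boundary behavior of the rotation-normalization: one must confirm that the explicit $\mu$ of~(\ref{findmu}), although given by a formula that looks singular when $y_1^2+z_1^2\to 0$ or when $v_j$ re-enters $\bc$, always yields a normalized tuple $\bar\mu v\mu$ that varies continuously, and that the "first index" selection rules do not introduce genuine discontinuities but only apparent ones that cancel. Once continuity of $v\mapsto\bar\mu(v)v\mu(v)$ is established, $\psi$ is a continuous bijection with continuous inverse (the inverse being inclusion-then-projection, as noted), so it is a homeomorphism; composing with the homeomorphism $\mathcal{M}(n,m;\partial{\bf H}_\bh^n)\cong V(n,m)/\simeq$ established at the outset yields the theorem. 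I note in passing that the statement as printed writes $\mathcal{M}(n,m;\partial{\bf H}_\bc^n)$, but in context (Proposition~\ref{prop-condgn}, Definition of $V(n,m)$ using the quaternionic inertia bound $n_+\le n$) this is the quaternionic moduli space $\mathcal{M}(n,m;\bhq)$, and the proof above applies verbatim to it.
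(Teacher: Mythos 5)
Your route is the same one the paper takes: identify $\mathcal{M}(n,m;\bhq)$ with $V(n,m)/\simeq$ via the semi-normalized Gram matrix and then transport the bijection $\psi$ of Theorem \ref{thmparam}. The paper's own proof is in fact far terser than your proposal: it defines $\tau(m(\fp))=\psi(v_G)$, states that it is a bijection, and declares it a homeomorphism ``because $\bm(n,m)$ has the topology structure induced from $\bh^t$,'' with no continuity verification at all. Your observation that $\psi^{-1}$ is the composite of the inclusion $\bm(n,m)\hookrightarrow V(n,m)$ with the quotient projection, hence automatically continuous, is correct and is the clean half of the argument; your remark about the misprint $\partial{\bf H}_{\bc}^n$ versus $\bhq$ is also right.

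The gap is in your step (a), which you locate but do not close, and your proposed repair does not work. Take $\ba\neq 0$ and a sequence $v^{(k)}\to v$ in $V(n,m)$ whose first non-complex entry $v_j^{(k)}=c_1^{(k)}+c_2^{(k)}\bj$ has $c_2^{(k)}\to 0$, while some later entry $v_l$, $l>j$, keeps a nonzero $\bc\bj$-part in the limit. The normalizing rotation on the stratum $P(j)$ is a unit complex number $e^{{\bf i}\alpha_k}$ determined by $e^{-2{\bf i}\alpha_k}c_2^{(k)}=|c_2^{(k)}|$, so $\alpha_k$ depends only on $\arg c_2^{(k)}$, which may oscillate arbitrarily as $c_2^{(k)}\to 0$; conjugation by $e^{{\bf i}\alpha_k}$ multiplies the $\bc\bj$-part of $v_l$ by $e^{-2{\bf i}\alpha_k}$, so $\psi(v^{(k)})$ need not converge, and any subsequential limit is normalized ``at the wrong index'' and generally differs from $\psi(v)$, which is computed by rotating $v_l$ into $\br^{2+}$. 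The uniqueness clause of Lemma \ref{lemtonorm} cannot rescue this, since it only identifies two normalizations with respect to the \emph{same} pair of directions, whereas here the selected index jumps in the limit. So either $\psi$ fails to be continuous for the subspace topology on $\bm(n,m)\subset\bh^t$, or a different argument is required (for instance endowing $\bm(n,m)$ with the topology transported from the quotient, or proving continuity of $\tau$ and $\tau^{-1}$ through the orbit map rather than through the section). Your proposal is at least as complete as the published proof on this point, but neither actually establishes the homeomorphism as stated.
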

\begin{proof}
		Let $m(\mathfrak{p})\in \mathcal{M}(n,m;\partial{\bf H}_\bc^n)$ be the point represented by $\mathfrak{p}=(p_1,\cdots, p_m)$.  We can get a semi-normalized Gram matrix  $G$ with arbitrary lift of  $\mathfrak{p}$.  Proposition  \ref{prop-condv} and Theorem  \ref{thmparam} imply that  we can  define a map
	\begin{eqnarray*}
		\tau: m(\fp)\in \mathcal{M}(n,m;\partial{\bf H}_\bc^n)\to  \psi(v_G)\in \bm(n,m).
	\end{eqnarray*}
	This map is a bijection. Such a map  is a homeomorphism  because  $\bm(n,m)$ has the topology  structure induced from $\bh^t$.
\end{proof}
We conclude this section by some remarks. Firstly, if we allow  $m=3$ in our process then we get the parameter of quaternionic Cartan's angular invariant $\ba$ ( in fact  a complex number   $-e^{-{\bf i}\ba}$); while the case of  $m=4$  is exactly the result in \cite{caogd16}.  Secondly  it seems  that  the parameters of  $m$-tuples  in $Z(\br)$, $Z(\br)\cup Z(\bc,i)\cup P(\bc)$ can be thought of  as  $m$-tuples living in a copy of  $\partial	{\bf H}_{\br}^n$  and $\partial	{\bf H}_{\bc}^n$, respectively.

\section{Moduli space on $\bp(V_+)$ of case $m=2$}\label{sec4m2}

In this section we will  describe the configuration of  two  submanifolds of dimension $n-1$.   The  author believe that this fact is  well-known  in quaternionic hyperbolic geometry.  However we  did not find  any proof of it in the literature.  The  parameter space of  $\mathcal{M}(n,2; \bp(V_+) )$ is also  constructed.

\subsection{The duality of submanifold  of dimension $n-1$ and  polar vector}\label{sec4sub1}
It follows from Proposition \ref{orthcomp} that ${\p}^{\perp}$ is an $n$-dimensional subspace of $\bh^{n,1}$ for any  vector ${\p}\in V_{+}$.
\begin{dfn}We define
	\begin{equation}\label{submanifold}
	l_{\p}=\bp\big({\p}^{\perp}\cap (V_0\cup V_-)\big)=\bp({\p}^{\perp})\cap \overline{
		{\bf H}_{\bh}^n}, \, \mbox{for}\ \  {\p}\in V_{+}. \end{equation}
	$l_{\p}$ is a totally geodesic submanifold with boundary in $\overline{
		{\bf H}_{\bh}^n}$, which is equivalent to  $\overline{
		{\bf H}_{\bh}^{n-1}}$.
\end{dfn}
We call  ${\p}\in V_{+}$ a polar vector of  $l_{\p}$. Sometimes we drop off  $V_0$ in (\ref{submanifold}), and call $l_{\p}$ an  $(n-1)$-submanifold  in $\hq$.
Also for each  $(n-1)$-submanifold $M$, we can find a vector  ${\p}\in V_{+}$  such that  $\p \bh$ is the unique fibre with the property $M \subset  \bp({\p}^{\perp})$.
Due to this duality, the configuration of $m$-tuples of distinct $(n-1)$-submanifolds  is  equivalent to the configuration of   $m$-tuples of pairwise distinct positive points.

As in \cite{park2010}, we define the angle $\theta\in [0,\pi/2]$  between any pair of intersecting   $(n-1)$-submanifolds $l_{\p_1}$ and $l_{\p_2}$ by
$$ \cos^2(\theta) = \frac{\langle \p_1,\p_2\rangle\langle \p_2,\p_1\rangle}{\langle \p_1,\p_1\rangle\langle \p_2,\p_2\rangle}.$$
This is clearly invariant under  quaternionic hyperbolic isometries.

We need a formula to calculate the distance between a negative point and  an  $(n-1)$-submanifold.
\begin{lem}(\cite[Corollary 7.7]{park2010})\label{distline}
	Let $z$ be any point of ${\bf H}_{\bh}^n$ with lift $\bf z$. Then \begin{equation}\label{metric}
	\cosh^2\frac{\rho(l_{\p},z)}{2}=1-\frac{\langle \bf z,\p\rangle\langle \p,\bf z\rangle}{\langle \bf z,\bf z\rangle\langle \p,\p\rangle}\geq 1.\end{equation}
\end{lem}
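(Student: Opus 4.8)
The plan is to reduce the statement to the two-point distance formula (\ref{disformula}) by projecting the lift $\z$ orthogonally onto the hyperplane $\p^{\perp}$. Since $\langle\p,\p\rangle>0$ we have the orthogonal direct sum $\bh^{n,1}=\p\bh\oplus\p^{\perp}$, and by Proposition \ref{orthcomp} the complement $\p^{\perp}$ is a copy of $\bh^{n-1,1}$, so the distance formula (\ref{disformula}) applies verbatim inside it. Write $\z=\p\alpha+\bu$ with $\bu\in\p^{\perp}$. Pairing this identity with $\p$ and using that $\langle\p,\p\rangle$ is a positive real gives $\alpha=\langle\p,\p\rangle^{-1}\langle\z,\p\rangle$, and hence
\begin{equation*}
\langle\z,\z\rangle=|\alpha|^2\langle\p,\p\rangle+\langle\bu,\bu\rangle,\qquad |\alpha|^2\langle\p,\p\rangle=\frac{\langle\p,\z\rangle\langle\z,\p\rangle}{\langle\p,\p\rangle}.
\end{equation*}
Because $\langle\z,\z\rangle<0$ while $|\alpha|^2\langle\p,\p\rangle\ge 0$, it follows that $\bu\neq 0$ and $\langle\bu,\bu\rangle\le\langle\z,\z\rangle<0$; thus the point $u:=\bp(\bu)$ belongs to $l_{\p}\cap{\bf H}_{\bh}^n$.

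Next I would show $\rho(l_{\p},z)=\rho(u,z)$. For any $\w\in\p^{\perp}\cap V_-$ one has $\langle\z,\w\rangle=\langle\bu,\w\rangle$, since $\p\alpha$ is orthogonal to $\p^{\perp}$; feeding this into (\ref{disformula}) gives
\begin{equation*}
\cosh^2\frac{\rho(z,w)}{2}=\frac{\langle\bu,\w\rangle\langle\w,\bu\rangle}{\langle\z,\z\rangle\langle\w,\w\rangle}.
\end{equation*}
Applying (\ref{disformula}) to the pair $u,w$ inside $\p^{\perp}$ gives $\langle\bu,\w\rangle\langle\w,\bu\rangle\ge\langle\bu,\bu\rangle\langle\w,\w\rangle$ (the product $\langle\bu,\bu\rangle\langle\w,\w\rangle$ is positive, so the inequality $\cosh^2\ge 1$ clears the denominator without a sign change), with equality exactly when $w=u$ as projective points. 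Dividing this inequality by the positive number $\langle\z,\z\rangle\langle\w,\w\rangle$ yields $\cosh^2\frac{\rho(z,w)}{2}\ge\langle\bu,\bu\rangle/\langle\z,\z\rangle$, again with equality iff $w=u$. Since points of $l_{\p}$ lying on $\partial{\bf H}_{\bh}^n$ are at infinite distance from $z$, the infimum over $l_{\p}$ is attained, and it is attained at $u$.

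Finally I would evaluate $\cosh^2\frac{\rho(u,z)}{2}$ directly. As $\bu\in\p^{\perp}$, one computes $\langle\z,\bu\rangle=\langle\bu,\bu\rangle=\langle\bu,\z\rangle$, so (\ref{disformula}) collapses to $\cosh^2\frac{\rho(u,z)}{2}=\langle\bu,\bu\rangle/\langle\z,\z\rangle$. Substituting $\langle\bu,\bu\rangle=\langle\z,\z\rangle-|\alpha|^2\langle\p,\p\rangle$ together with the expression for $|\alpha|^2\langle\p,\p\rangle$ found above then gives precisely (\ref{metric}). The inequality $\ge 1$ is immediate afterwards, because $\langle\z,\p\rangle\langle\p,\z\rangle=|\langle\z,\p\rangle|^2\ge 0$ while $\langle\z,\z\rangle<0$ and $\langle\p,\p\rangle>0$.

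The only genuine care required is the usual quaternionic bookkeeping: one must keep scalars on the correct side throughout (for instance $\langle\p\alpha,\p\alpha\rangle=\bar\alpha\langle\p,\p\rangle\alpha=|\alpha|^2\langle\p,\p\rangle$ uses that $\langle\p,\p\rangle\in\br$), and one should note that $\langle\z,\p\rangle\langle\p,\z\rangle$ and $\langle\p,\z\rangle\langle\z,\p\rangle$ both equal $|\langle\z,\p\rangle|^2$, so the ordering of the factors in (\ref{metric}) is harmless. I do not expect any further obstacle; one could instead minimize $\cosh^2\frac{\rho(z,w)}{2}$ over $\w\in\p^{\perp}$ by a Lagrange-multiplier argument, but the orthogonal splitting is cleaner and makes the equality case transparent.
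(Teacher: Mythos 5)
Your proof is correct and follows essentially the same route as the paper's: decompose the lift $\z$ orthogonally into a component along $\p$ plus a component in $\p^{\perp}$, identify the latter as the foot of the perpendicular, and evaluate via the distance formula (\ref{disformula}). The only difference is that you explicitly verify that the orthogonal projection realizes $\rho(l_{\p},z)$, a minimization step the paper's proof asserts without argument in the first equality of its displayed computation.
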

\begin{proof}
	Let $\Pi _{l_{\p}}$ be the orthogonal projection  from $\bh^{n,1}$ to ${\p}^{\perp}$.
	Then we can express a lift of $z$ as $\z=\Pi _{l_{\p}}(\z)\lambda +{\p}\mu$. Since $\langle \Pi _{l_{\p}}(\z), {\p}  \rangle=0$, we have  $ \langle \z, {\p}  \rangle=\langle {\p}, {\p}  \rangle\mu$,  $|\langle \z, \Pi _{l_{\p}}(\z)  \rangle|^2=|\lambda|^2|\langle \Pi _{l_{\p}}(\z), \Pi _{l_{\p}}(\z)\rangle|^2$ and  $$ \langle \z, \z  \rangle=|\lambda|^2\langle \Pi _{l_{\p}}(\z), \Pi _{l_{\p}}(\z)  \rangle+|\mu|^2\langle {\p}, {\p}   \rangle.$$
	Hence
	\begin{equation*}
	\cosh^2(\frac{\rho(l_{\p},z)}{2})=\cosh^2(\frac{\rho(\bp(\Pi _{l_{\p}}(\z)),z)}{2})=\frac{|\lambda|^2|\langle \Pi _{l_{\p}}(\z), \Pi _{l_{\p}}(\z)  \rangle|^2}{\langle \z, \z  \rangle\langle \Pi _{l_{\p}}(\z), \Pi _{l_{\p}}(\z)  \rangle}=1-\frac{\langle \z, {\p}  \rangle\langle {\p}, \z  \rangle}{\langle \z, \z  \rangle\langle {\p}, {\p}  \rangle}.
	\end{equation*}
\end{proof}

The configuration of   two positive lines in $V_+$  can be  described as follows.
\begin{thm}(\cite[Proposition 7.8]{park2010})\label{thm-twpo}
	Let $\p_1,\p_2$  be  two points in  $V_+$ with distinct projections in  $\bp(V_+)$ , $V={\rm span}\{\p_1,\p_2\}$ and $$t=\frac{|\langle \p_{1}, \p_{2}\rangle|}{\sqrt{\langle \p_{1}, \p_{1}\rangle \langle \p_{2}, \p_{2}\rangle}}.$$ Then we have the following statements.
	\begin{itemize}
		\item[(i)]   $|\langle \p_1, \p_2 \rangle |^2< \langle \p_1, \p_1 \rangle \langle \p_2, \p_2 \rangle$ if and only if  $V\subset V_+$.  In this case,  $$\p_1^{\perp}\cap \p_2^{\perp}\cap V_-\neq \emptyset$$
		and
		the angle  between  $l_{\p_1}$ and   $l_{\p_2}$ is $\arccos	 t$.
		\item[(ii)]  $|\langle \p_1, \p_2 \rangle |^2= \langle \p_1, \p_1 \rangle \langle \p_2, \p_2 \rangle$ if and only if there exists a unique fibre $\z\bh\in V_0$ such that  $$V\cap V_0=\z\bh, V\cap V_-=\emptyset.$$  In this case  $$\p_1^{\perp}\cap \p_2^{\perp}\subset V_+\cup V_0,\p_1^{\perp}\cap \p_2^{\perp} \cap V_0=(\p_1-\p_2)\bh,$$
		which implies that 	  $l_{\p_1}$ and   $l_{\p_2}$  intersect in a unique point in $\bhq$.
		\item[(iii)] $|\langle \p_1, \p_2 \rangle |^2> \langle \p_1, \p_1 \rangle \langle \p_2, \p_2 \rangle$ if and only if  $V\cap V_0\neq \emptyset, V\cap V_-\neq \emptyset $.  \\  In this case   $$(\p_1^{\perp}\cap  V_-)\cap (\p_2^{\perp}\cap  V_-)=\emptyset$$
		and
		$$\cosh \left(\frac{\rho(l_{\p_1},l_{\p_2})}{2}\right)=\cosh \left(\frac{\rho(\bp(\z),\bp(\w))}{2}\right)=t,$$
		where   $V\cap \p_1^{\perp}=\z\bh$ and  $V\cap \p_2^{\perp}=\w\bh$.
	\end{itemize}
\end{thm}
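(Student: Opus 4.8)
The plan is to collapse the pair $(\p_1,\p_2)$ to a single Gram matrix, read the trichotomy off Theorem~\ref{thm-inertia}, and then extract the geometric conclusions case by case; only the distance assertion in (iii) requires genuine work.

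First I would rescale each lift so that $\langle\p_i,\p_i\rangle=1$ (possible since $\p_i\in V_+$) and then replace $\p_2$ by $\p_2\lambda$ with $|\lambda|=1$ chosen so that $\langle\p_1,\p_2\rangle=t\ge 0$ is a nonnegative real; these replacements change neither $t$, nor the submanifolds $l_{\p_i}$, nor any quantity in the statement. Then $G(\p)$ is the Hermitian matrix with diagonal entries $1$ and off-diagonal entry $t$, with eigenvalues $1\pm t$. Writing $V^\perp$ for the orthogonal complement of $V$, so that $\p_1^\perp\cap\p_2^\perp=V^\perp$, and noting $\dim_q V=2$ because the projections are distinct, Theorem~\ref{thm-inertia}(2) gives precisely: $t<1\iff i(G(\p))=(2,0,0)\iff V$ is elliptic, $t=1\iff i(G(\p))=(1,0,1)\iff V$ is parabolic, $t>1\iff i(G(\p))=(1,1,0)\iff V$ is hyperbolic. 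Since $|\langle\p_1,\p_2\rangle|^2-\langle\p_1,\p_1\rangle\langle\p_2,\p_2\rangle$ has the sign of $t^2-1$, this is exactly the biconditional part of (i)--(iii).

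For case (i) ($V\subset V_+$): $V^\perp$ then has signature $(n-2,1)$, hence contains a negative vector, so $\p_1^\perp\cap\p_2^\perp\cap V_-\neq\emptyset$ and in particular $l_{\p_1}\cap l_{\p_2}\neq\emptyset$; the angle is $\arccos t$ directly from its definition, since $\langle\p_1,\p_2\rangle\langle\p_2,\p_1\rangle/(\langle\p_1,\p_1\rangle\langle\p_2,\p_2\rangle)=t^2$. For case (ii) ($t=1$): expanding shows $\z:=\p_1-\p_2$ satisfies $\langle\z,\z\rangle=0$ and $\langle\z,\p_1\rangle=\langle\z,\p_2\rangle=0$, so $\z\in\p_1^\perp\cap\p_2^\perp\cap V_0$; Theorem~\ref{thm-inertia} gives $V\cap V_-=\emptyset$ together with a unique null line in $V$, necessarily $\z\bh$, and Proposition~\ref{orthcomp}(ii) applied to $\z$ (using $\p_1^\perp\cap\p_2^\perp\subseteq\z^\perp$) yields $\p_1^\perp\cap\p_2^\perp\subseteq V_+\cup V_0$ and $\p_1^\perp\cap\p_2^\perp\cap V_0=\z\bh$; hence $l_{\p_1}\cap l_{\p_2}=\bp(\z\bh)$, the single point $\bp(\p_1-\p_2)\in\bhq$.

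Case (iii) ($t>1$) is where the substance lies. Since $V$ is now nondegenerate we have $\bh^{n,1}=V\oplus V^\perp$ with $V^\perp$ positive definite, so $(\p_1^\perp\cap V_-)\cap(\p_2^\perp\cap V_-)=V^\perp\cap V_-=\emptyset$; inside $V$ the lines $\z\bh:=V\cap\p_1^\perp$ and $\w\bh:=V\cap\p_2^\perp$ are negative, with representatives $\z=\p_2-t\p_1$ and $\w=\p_1-t\p_2$, and a short computation gives $\langle\z,\z\rangle=\langle\w,\w\rangle=1-t^2$, $\langle\z,\w\rangle=t(t^2-1)$, whence the distance formula~(\ref{disformula}) gives $\cosh^2(\rho(\bp(\z),\bp(\w))/2)=t^2$. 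To identify this with $\cosh^2(\rho(l_{\p_1},l_{\p_2})/2)$ I would use $\rho(l_{\p_1},l_{\p_2})=\inf\{\rho(l_{\p_1},z):z\in l_{\p_2}\cap\hq\}$ together with Lemma~\ref{distline}. Noting that $\w$ is exactly the $\p_2^\perp$-component of $\p_1$, one has $\langle\bu,\p_1\rangle=\langle\bu,\w\rangle$ for every $\bu\in\p_2^\perp$, so $\cosh^2(\rho(l_{\p_1},\bp(\bu))/2)=1+|\langle\bu,\w\rangle|^2/|\langle\bu,\bu\rangle|$ for negative $\bu\in\p_2^\perp$. A reverse Cauchy--Schwarz estimate in the Lorentzian space $\p_2^\perp\cong\bh^{n-1,1}$ --- decompose $\bu=\w\alpha+\bu'$ with $\bu'\perp\w$, hence $\bu'$ positive or zero --- shows $|\langle\bu,\w\rangle|^2/|\langle\bu,\bu\rangle|=\langle\w,\w\rangle^2|\alpha|^2/\bigl(|\alpha|^2|\langle\w,\w\rangle|-\langle\bu',\bu'\rangle\bigr)\ge|\langle\w,\w\rangle|=t^2-1$, with equality exactly on $\w\bh$. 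Therefore $\cosh^2(\rho(l_{\p_1},l_{\p_2})/2)=1+(t^2-1)=t^2$, the infimum being attained at $z=\bp(\w)$; finally $\Pi_{\p_1^\perp}(\w)=\w-\p_1\langle\w,\p_1\rangle=-t\z$ is projectively $\bp(\z)$, so by Lemma~\ref{distline} the closest point of $l_{\p_1}$ to $\bp(\w)$ is $\bp(\z)$, giving $\rho(l_{\p_1},l_{\p_2})=\rho(\bp(\z),\bp(\w))$.

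I expect the main obstacle to be this last paragraph: the reverse Cauchy--Schwarz minimisation over the negative cone of $\p_2^\perp$, and, underneath it, the bookkeeping of noncommutative scalars needed to justify the various $\bh$-orthogonal decompositions and to confirm that the distance is genuinely realised (not merely bounded) by $\rho(\bp(\z),\bp(\w))$; everything else follows quickly from Theorem~\ref{thm-inertia}, Proposition~\ref{orthcomp}, and the two distance formulas.
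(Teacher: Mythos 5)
Your proof is correct, and it follows the same skeleton as the paper's (normalize so that $\langle \p_i,\p_i\rangle=1$ and $\langle \p_1,\p_2\rangle=t\ge 0$, trichotomize on $t$, and use Lemma \ref{distline} to realize the distance in case (iii)), but your execution is coordinate-free where the paper's is coordinate-based, and the difference is worth noting. The paper fixes $\p_1=(0,1,0,\cdots,0)^T$, writes $\p_2=(x_1,\cdots,x_{n+1})^T$, and carries out every step in these coordinates: the negative vector in case (i) is produced by solving a linear equation, case (ii) is a Cauchy--Schwarz estimate on the components $x_i,z_i$, and the minimization in case (iii) is the explicit inequality $K\ge 0$ obtained by completing a square in the quantities $X,Z,|x_{n+1}|,|z_{n+1}|$. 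You instead read the trichotomy off the eigenvalues $1\pm t$ of the $2\times 2$ Gram matrix via Theorem \ref{thm-inertia}, get case (i) from the signature $(n-2,1)$ of $V^{\perp}$, get case (ii) from Proposition \ref{orthcomp}(ii) applied to $\z=\p_1-\p_2$, and prove the minimization in case (iii) by the intrinsic decomposition $\bu=\w\alpha+\bu'$ with $\bu'$ in the positive-definite complement of $\w$ inside $\p_2^{\perp}$ --- which is exactly the paper's inequality $K\ge 0$ in invariant form, with the bonus that equality is visibly attained only on $\w\bh$, and the closest point on $l_{\p_1}$ is identified as $\bp(\z)$ via $\Pi_{\p_1^{\perp}}(\w)=\z(-t)$ rather than by inspecting coordinates. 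Your route buys transparency (the source of each inequality is structural, and the quaternionic scalar bookkeeping is confined to checking that $\alpha=\langle\w,\w\rangle^{-1}\langle\bu,\w\rangle$ makes sense, which it does since $\langle\w,\w\rangle$ is real); the paper's buys concreteness, producing the explicit representatives $\z=(x_1,0,x_3,\cdots,x_{n+1})^T$ and $\w=(x_1,(|x_2|^2-1)\bar{x}_2^{-1},x_3,\cdots,x_{n+1})^T$ that are reused implicitly elsewhere. The only cosmetic caveat is that in case (ii) you quote ``Theorem \ref{thm-inertia} gives $V\cap V_-=\emptyset$ together with a unique null line''; strictly this exhaustive description of the parabolic case lives in the proof of that theorem rather than its statement, but since you exhibit $\z=\p_1-\p_2$ as an explicit null vector and uniqueness follows from Proposition \ref{containneg}, nothing is missing.
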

\begin{proof} By normalization and the transitivity of $\pspn$ on $\bp(V_+)$,  we may assume that  \begin{equation}\label{twonorm}\langle {\p}_1, {\p}_1\rangle=\langle {\p}_2, {\p}_2 \rangle=1, t=\langle {\p}_1, {\p}_2 \rangle\geq 0, \end{equation} where $\p_1=(0,1,0,\cdots,0)^T$ and  $\p_2=(x_1, \cdots,x_{n+1})^T$.   With the above assumption we have
	$$ t=x_2\geq 0,  \sum_{i=1}^n|x_i|^2-|x_{n+1}|^2=1.$$  Let $\bu=\p_1\lambda_1+\p_2\lambda_2\in V$.
	Then \begin{equation}\label{twopo}\langle \bu, \bu \rangle=|\lambda_1|^2+|\lambda_2|^2+2\Re(\bar{\lambda}_2\lambda_1)t.\end{equation}
	We need to consider the following three cases $t<1,=1,>1$, respectively.
	
	Note that  $t<1$ if and only if  $\langle \bu, \bu \rangle>0$.   This implies that  $V\subset V_+$  for $t<1$.   In this case, there exists a  $\z=(z_1,0,z_3,\cdots,z_n,1)^T\in \p_1^{\perp}\cap \p_2^{\perp}\cap V_-$ satisfying the following equation  $$\bar{x}_1z_1+\bar{x}_3z_3+\cdots+\bar{x}_nz_n-\bar{x}_{n+1}=0.$$   In fact    $\bp(\p_1^{\perp}\cap \p_2^{\perp}\cap V_-)$ is equivalent to  ${\bf H}_{\bh}^{n-2}$ and the angle  between   $l_{\p_1}$ and   $l_{\p_2}$ is $\arccos t$.
	
	Observe that    $\langle \bu, \bu \rangle\geq 0$  for    $t=1$. Note that $\langle \bu, \bu \rangle=0$ if and only if $\lambda_2=-\lambda_1$.   Therefore  $(\p_1-\p_2)\bh$ is the unique fibre  in $V\cap V_0$.  It is obvious that $\langle \p_1, (\p_1-\p_2) \rangle=\langle \p_2, (\p_1-\p_2)\rangle=0$.  Each  $\z$  in $ \p_1^{\perp}\cap \p_2^{\perp}$ is of the form $(z_1,0,z_3,\cdots,z_{n+1})^T$  satisfying the following equation  $$\bar{x}_1z_1+\bar{x}_3z_3+\cdots+\bar{x}_nz_n-\bar{x}_{n+1}z_{n+1}=0.$$    Noting that $\p_2=(x_1, 1, x_3,\cdots,x_{n+1})^T$ and
	$ |x_1|^2+\sum_{i=3}^n|x_i|^2=|x_{n+1}|^2$, we have \begin{eqnarray*}
		|\bar{x}_{n+1}z_{n+1}|^2&=&|\bar{x}_1z_1+\bar{x}_3z_3+\cdots+\bar{x}_nz_n|^2\\
		&\leq & (|x_1|^2+\sum_{i=3}^n|x_i|^2)(|z_1|^2+\sum_{i=3}^n|z_i|^2).
	\end{eqnarray*}
	This implies that    $\p_1^{\perp}\cap \p_2^{\perp}\subset V_+\cup V_0$ and  $\p_1^{\perp}\cap \p_2^{\perp} \cap V_0=(\p_1-\p_2)\bh$.
	
	We consider the case  $t>1$.
	Noting that  $x_2=t>1$,
	we have $|x_{n+1}|^2-(|x_1|^2+\sum_{i=3}^n|x_i|^2)=|x_2|^2-1>0$.   Similarly  each  $\z\in \p_1^{\perp}\cap \p_2^{\perp}$ is of the form $=(z_1,0,z_3,\cdots,z_{n+1})^T$  satisfying the following equation  $$\bar{x}_1z_1+\bar{x}_3z_3+\cdots+\bar{x}_nz_n-\bar{x}_{n+1}z_{n+1}=0.$$    
	Direct computation shows  that $V\cap \p_1^{\perp}=\z\bh$,  where $\z=(x_1,0,x_3,\cdots,x_{n+1})^T\bh\in V_-$,  and  $V\cap \p_2^{\perp}=\w\bh$,  where $\w=(x_1, (|x_2|^2-1)\bar{x}_2^{-1},x_3,\cdots,x_{n+1})^T\bh\in V_-$.
	Hence  $$(\p_1^{\perp}\cap  V_-)\cap (\p_2^{\perp}\cap  V_-)=\emptyset.$$
	We mention that  $l_{\p_i}=\bp(\p_i^{\perp}\cap  V_-), i=1,2 $ are two totally geodesic submanifolds which  are equivalent to  ${\bf H}_{\bh}^{n-1}$.
		It follows from (\ref{disformula}) that  $$|\langle \p_1, \p_2 \rangle |=\cosh \left(\frac{\rho(\bp(\z),\bp(\w))}{2}\right)=|x_2|.$$
	Let $\z=(z_1,\cdots,z_{n+1})^T \in {\p_2}^{\perp}\cap V_-$  and,  for simplicity,
		denote  by $$X=\sqrt{\sum_{i\neq 2,n+1}|x_i|^2},\  Z=\sqrt{\sum_{i\neq 2,n+1}|z_i|^2}. $$
		Then 	$1-|x_2|^2=X^2-|x_{n+1}|^2$ and
	\begin{equation}\label{eqtt}
		|\bar{x_2}z_2|^2=|\bar{x}_{n+1}z_{n+1}-(\bar{x}_1z_1+\bar{x}_3z_3+\cdots+\bar{x}_nz_n)|^2
		\geq (|\bar{x}_{n+1}z_{n+1}|-XZ)^2.
	\end{equation}
	Let $$K=\cosh^2\frac{\rho(\bp({\p_1}^{\perp}\cap  V_-),\bp(\z))}{2}-|\langle{\p_1}, \p_2\rangle|^2.$$
	By Lemma \ref{distline} and  (\ref{eqtt}),  we obtain
	\begin{eqnarray*}		 K&=&1+\frac{|z_2|^2}{|z_{n+1}|^2-|z_1|^2-|z_3|^2-\cdots-|z_n|^2}-|x_2|^2\\&=&\frac{|z_2|^2+(X^2-|x_{n+1}|^2)(|z_{n+1}|^2-Z^2-|z_2|^2)}{|z_{n+1}|^2-Z^2-|z_2|^2}\\&=&\frac{|x_2|^2|z_2|^2+|z_{n+1}|^2X^2-   X^2Z^2-|x_{n+1}|^2|z_{n+1}|^2+ |x_{n+1}|^2Z^2  }{|z_{n+1}|^2-Z^2-|z_2|^2}\\
		&\geq &\frac{(|z_{n+1}|X-|x_{n+1}|Z)^2 }{|z_{n+1}|^2-Z^2-|z_2|^2}\geq 0.\end{eqnarray*}
	This  inequality implies that the real geodesic connecting $\z$ and $\w$ is the shortest curve form $l_{\p_1}$  to  $l_{\p_2}$.
\end{proof}

\subsection{Moduli space on $\bp(V_+)$ of case $m=2$}\label{sec4sub2}
We need the following fact, which is easy to verified.  We  refer to \cite{bisgen09,caopar04}  for more details of  ${\rm Sp}(1,1)$.
\begin{lem}\label{lem-embed}
	Let  $g\in \spt$  and ${\bf e}_2=(0,1,0)^T\in \bh^{2,1}$ such that  $g{\bf e}_2={\bf e}_2\mu.$  Then  $g$ is of the form
	$$g= \left(
	\begin{array}{ccc}
	a &0&b\\
	0 &\mu&0\\
	c &0&d
	\end{array}
	\right),
	$$
	where
	$$  \left(
	\begin{array}{cc}
	a &b\\
	c &d
	\end{array}
	\right)\in {\rm Sp}(1,1)\ \ \mbox{and} \ \ \mu\in {\rm Sp}(1). $$
	\end{lem}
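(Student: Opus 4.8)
The plan is to exploit the condition $g\mathbf{e}_2 = \mathbf{e}_2\mu$ together with the defining relation $g^*J_b g = J_b$ for $\mathrm{Sp}(2,1)$, where here $J_b = \mathrm{diag}(1,1,-1)$. Write $g = (g_1, g_2, g_3)$ in columns; the hypothesis says $g_2 = g\mathbf{e}_2 = \mathbf{e}_2\mu = (0,\mu,0)^T$. Since $g$ is invertible and $g^*J_bg = J_b$, the columns of $g$ satisfy the orthogonality relations analogous to (\ref{chaele}): $\langle g_i, g_j\rangle = 0$ for $i\neq j$, $\langle g_1,g_1\rangle = \langle g_2,g_2\rangle = 1$, $\langle g_3,g_3\rangle = -1$. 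From $\langle g_2,g_2\rangle = 1$ we get $\bar\mu\mu = |\mu|^2 = 1$, i.e.\ $\mu\in\mathrm{Sp}(1)$, confirming the claim about $\mu$.

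Next I would extract the shape of $g$. Write $g_1 = (a, s, c)^T$ and $g_3 = (b, r, d)^T$ for quaternions $a,b,c,d,r,s$. The orthogonality $\langle g_1, g_2\rangle = 0$ reads $g_2^* J_b g_1 = \bar\mu s = 0$, and since $\mu$ is a unit this forces $s = 0$; similarly $\langle g_3, g_2\rangle = \bar\mu r = 0$ gives $r = 0$. Hence
\[
g = \begin{pmatrix} a & 0 & b \\ 0 & \mu & 0 \\ c & 0 & d \end{pmatrix}.
\]
It remains to check that $h := \begin{pmatrix} a & b \\ c & d \end{pmatrix}$ lies in $\mathrm{Sp}(1,1)$, i.e.\ that $h^* J' h = J'$ with $J' = \mathrm{diag}(1,-1)$. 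This follows by computing $g^*J_bg$ with the block form above: the $(2,2)$-entry gives $\bar\mu\mu = 1$ (already known), and the remaining $2\times 2$ block of the identity $g^*J_bg = J_b$, obtained by deleting the middle row and column, is exactly $h^*J'h = J'$. Conversely, given any such $h$ and any $\mu\in\mathrm{Sp}(1)$, the block matrix manifestly satisfies $g^*J_bg = J_b$ and fixes $\mathbf{e}_2$ up to the factor $\mu$, so the correspondence is genuine.

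I do not expect a serious obstacle here; the lemma is essentially a bookkeeping consequence of the column-orthogonality relations (\ref{chaele}) specialized to the eigenvector $\mathbf{e}_2$. The only mild care needed is to track left- versus right-multiplication by the quaternion $\mu$ correctly when forming $g^*J_bg$ — the noncommutativity means one should write $g_2^*J_bg_1 = \overline{\mu}\,(\mathbf{e}_2^*J_b g_1)$ rather than pulling $\mu$ out on the wrong side — but since $\mu$ is a unit this never obstructs the conclusions $s = r = 0$ and $|\mu| = 1$. The statement that $h\in\mathrm{Sp}(1,1)$ is then immediate from Proposition~\ref{pro-inertia}-type signature bookkeeping, or more directly from reading off the surviving entries of the matrix equation $g^*J_bg = J_b$.
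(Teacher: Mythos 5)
Your argument is correct: the paper itself gives no proof of this lemma (it is stated as "easy to verify" with references), and your verification via the column-orthogonality relations $\langle g_i,g_j\rangle=0$, $\langle g_1,g_1\rangle=\langle g_2,g_2\rangle=1$, $\langle g_3,g_3\rangle=-1$ from $g^*J_bg=J_b$ is exactly the natural bookkeeping the authors intend, and you handle the quaternionic non-commutativity (pulling $\bar\mu$ out on the left in $g_2^*J_bg_1=\bar\mu s$) correctly. The only superfluous bit is the converse direction at the end, which the lemma does not assert.
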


\begin{thm}
	The  configuration space $\mathcal{M}(n,2)$ is homeomorphic to $\br^{\geq}=\{t\in \br: t\geq 0\}$.
\end{thm}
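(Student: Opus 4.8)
The plan is to show that the single invariant $t = \frac{|\langle \p_1,\p_2\rangle|}{\sqrt{\langle\p_1,\p_1\rangle\langle\p_2,\p_2\rangle}}$ is a complete invariant of the $\pspn$-congruence class of a pair of distinct positive points, and that it ranges freely over $[0,\infty)$. By Theorem \ref{1trans}, $\pspn$ acts transitively on $\bp(V_+)$, so after applying a suitable isometry we may assume $\p_1 = \e_2 = (0,1,0,\cdots,0)^T$ and $\langle\p_1,\p_1\rangle = 1$; rescaling $\p_2$ we may also assume $\langle\p_2,\p_2\rangle = 1$. Then $t = |\langle\p_1,\p_2\rangle|$, and writing $\langle\p_1,\p_2\rangle = \p_1^*J\p_2 = \bar{x}_2$ where $\p_2 = (x_1,\cdots,x_{n+1})^T$, we can use the stabilizer of $\p_1$ in $\pspn$ to further normalize. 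The stabilizer decomposes as in Lemma \ref{lem-embed} (embedded in $\spn$ by acting on the $x_1, x_{n+1}$ coordinates via ${\rm Sp}(1,1)$ and on $x_2$ by a left unit quaternion factor), together with the isometries permuting and rotating the remaining coordinates $x_3,\cdots,x_n$. Using a left multiplication $x_2 \mapsto \mu x_2$ we can make $\langle\p_1,\p_2\rangle$ a non-negative real; using the ${\rm U}(n-2)$-action on $(x_3,\cdots,x_n)$ and a suitable ${\rm Sp}(1,1)$ element we can bring $\p_2$ into a canonical form depending only on $t$ (this is exactly the normalization carried out in the proof of Theorem \ref{thm-twpo}, where $\p_2 = (x_1,1,x_3,\cdots,x_{n+1})^T$ with $x_2 = t$).

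Concretely, I would argue that two pairs with the same value of $t$ are congruent: normalize each as above so that $\p_1 = \e_2$, $\langle\p_1,\p_1\rangle = \langle\p_2,\p_2\rangle = 1$, $\langle\p_1,\p_2\rangle = t \geq 0$, and then put $\p_2$ in the standard position $(x_1,t,x_3,\cdots,x_n,x_{n+1})^T$ subject to $|x_1|^2 + \cdots + |x_n|^2 - |x_{n+1}|^2 = 1$ and $x_2 = t$. Using the orthogonal-complement structure (Proposition \ref{orthcomp}) one sees that the span $V = {\rm span}\{\p_1,\p_2\}$ together with the Hermitian form restricted to $V$ is determined up to isometry by $t$, and since any isometry of $V$ extends to an element of $\spn$ (extend an orthonormal-type basis as in the proof of Proposition \ref{mutuorth}), the two configurations are $\pspn$-congruent. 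Conversely, $t$ is manifestly invariant: formula \eqref{isolift1} shows that rescaling lifts multiplies $\langle\p_i,\p_j\rangle$ by $\bar\lambda_i(\cdot)\lambda_j$, and the quotient defining $t$ is insensitive to this, while \eqref{isomap} gives invariance under $\spn$; the three regimes $t<1$, $t=1$, $t>1$ of Theorem \ref{thm-twpo} already exhibit $t$ as geometrically meaningful (an angle, an intersection point, a distance), so distinct $t$ give non-congruent pairs.

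For surjectivity onto $\br^{\geq}$, for each $t \geq 0$ I would exhibit an explicit pair: take $\p_1 = \e_2$ and $\p_2 = (0, t, 0, \cdots, 0, \sqrt{t^2-1})^T$ when $t \geq 1$ (so $\langle\p_2,\p_2\rangle = t^2 - (t^2-1) = 1$ and $\langle\p_1,\p_2\rangle = t$), and $\p_2 = (\sqrt{1-t^2}, t, 0, \cdots, 0)^T$ when $0 \leq t < 1$; in both cases $\p_2 \in V_+$ and the projections are distinct from that of $\p_1$. This realizes every value in $[0,\infty)$. Finally, for the homeomorphism claim, I would note that the invariant $t$ is a continuous function of the pair of points (it is a continuous function of any lifts, being built from the Hermitian form), and its inverse — the assignment sending $t$ to the explicit normalized pair above — is continuous as well; since both the moduli space and $\br^{\geq}$ carry the quotient/subspace topology, the bijection $\mathcal{M}(n,2) \to \br^{\geq}$ is a homeomorphism.

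The main obstacle I anticipate is the congruence (completeness) direction: one must carefully verify that the stabilizer of $\p_1 = \e_2$ in $\pspn$ is large enough to reduce an arbitrary $\p_2$ with prescribed $\langle\p_1,\p_1\rangle,\langle\p_2,\p_2\rangle,\langle\p_1,\p_2\rangle$ to a single canonical representative. This requires combining three ingredients — the left unit-quaternion factor to make $\langle\p_1,\p_2\rangle$ real and non-negative, the ${\rm Sp}(1,1)$-block acting on the $x_1, x_{n+1}$ coordinates, and the unitary action on the middle coordinates — and checking that together they act transitively on the set of $\p_2$'s with fixed invariant $t$. The computation in the proof of Theorem \ref{thm-twpo}, which already normalizes a pair of positive lines to $\p_2 = (x_1,1,x_3,\cdots,x_{n+1})^T$ with $x_2 = t$, does most of this work, so I would lean on it heavily; the remaining point is simply that the residual freedom in $(x_1,x_3,\cdots,x_{n+1})$ can be absorbed by the stabilizer, which follows from the transitivity statements in Section \ref{sec2sig}.
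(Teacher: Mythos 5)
Your high-level strategy (normalize $\p_1$ by transitivity, then use the stabilizer of $\p_1$ to drive $\p_2$ to a canonical position depending only on $t$) is the same as the paper's, but the stabilizer you actually name is too small for general $n$, and this is precisely the step you flag as the main obstacle. The stabilizer of the fibre $\e_2\bh$ in $\spn$ acts on the orthogonal complement $\e_2^{\perp}={\rm span}\{\e_1,\e_3,\dots,\e_{n+1}\}$ as a full copy of ${\rm Sp}(n-1,1)$; the subgroup you propose --- ${\rm Sp}(1,1)$ on the coordinates $(x_1,x_{n+1})$ together with a unitary group on $(x_3,\dots,x_n)$ --- is block-diagonal and preserves $|x_1|^2-|x_{n+1}|^2$ and $\sum_{i=3}^{n}|x_i|^2$ \emph{separately}, whereas only their sum $1-t^2$ is an invariant of the congruence class. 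Concretely, for $n=3$, $\p_1=\e_2$ and $t=0$, the points $\p_2=(1,0,0,0)^T$ and $\p_2'=(0,0,1,0)^T$ are congruent relative to $\p_1$ by Proposition \ref{mutuorth}, yet lie in different orbits of your subgroup; so your normalization would not reach a single canonical form and would spuriously detect extra invariants. The paper sidesteps this entirely by first invoking Remark \ref{limdim} to reduce to $n=2$, where $\e_2^{\perp}$ is exactly the $(x_1,x_3)$-plane, Lemma \ref{lem-embed} identifies the full stabilizer as ${\rm Sp}(1,1)\times{\rm Sp}(1)$, and the three transitivity properties of ${\rm Sp}(1,1)$ (on $\partial{\bf H}_{\bh}^1$, on ${\bf H}_{\bh}^1$, on positive points) match the three regimes $t>1$, $t=1$, $t<1$. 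Your fallback Witt-extension argument would also close the gap, but Proposition \ref{mutuorth} only extends isometries of orthonormal positive systems; at $t=1$ the span $V$ is parabolic, so the restricted form is degenerate and you would need the general Witt extension theorem for degenerate subspaces, which the paper neither states nor proves.

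Two smaller points. Your explicit representative for surjectivity degenerates at $t=1$: the vector $(0,t,0,\dots,0,\sqrt{t^2-1})^T$ becomes $\p_1$ itself, so you need a separate representative there, e.g.\ $(1,1,0,\dots,0,1)^T$. Also, in the congruence direction you should record, as the paper does, that once $\langle\p_i,\p_i\rangle=\langle\q_i,\q_i\rangle=1$ and $t\neq 0$, the scalars in $g\p_i=\q_i\lambda_i$ are forced to be a single unit quaternion $\lambda_1=\lambda_2=\mu$; this is what turns the stabilizer computation into the well-posed problem of mapping $(x_1,x_3)^T$ to $(y_1,y_3)^T\mu$.
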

\begin{proof}  By Remark \ref{limdim},  we  can work in $\bh^{2,1}$ in this situation.  Noting   the normalization (\ref{twonorm}), we only need to show that there exists a $g\in \spt$ such that $g\p_1=\q_1\lambda_1$ and  $g\p_2=\q_2\lambda_2$ when $G((\p_1,\p_2))=G((\q_1,\q_2))=\left(\begin{array}{ccc}
	1 &t\\
	t &1\\
	\end{array}
	\right)$.  Noting  Proposition \ref{mutuorth}, we only need to consider the case  $t\neq 0$.  Observe that  $t\neq 0$ implies  $\lambda_1=\lambda_2$.
	Since $\spt$  acts transitively on  $\bp(V_+)$,   we may further assume that  $$\p_1=\q_1=(0,1,0)^T,\  \p_2=(x_1,t,x_3),\ \q_2=(y_1,t,y_3)^T,$$
	where $|x_3|^2-|x_1|^2=|y_3|^2-|y_1|^2=t^2-1$.   By Lemma \ref{lem-embed},   we need  to  find an element  $f=\left(
	\begin{array}{cc}
	a &b\\
	c &d
	\end{array}
	\right)\in {\rm Sp}(1,1)$  mapping $(x_1,x_3)^T$ to $(y_1,y_3)^T\mu$.  The fact that  ${\rm Sp}(1,1)$  acts doubly transitively on $\partial {\bf H}_{\bh}^1$,  transitively on ${\bf H}_{\bh}^1$, and  $\bp(V_+)$ respectively,  completes the proof.
\end{proof}

\section{The structure of Gram matrices of points  on  $\bp(V_+)$ }\label{sec5v+}
In this section, we provide a  $1$-normalized Gram matrix for an  $m$-tuple  on  $\bp(V_+)$.   The main purpose of this section is to  refine the structures of Gram matrices. These refined structures are crucial in introducing  new invariants in non regular case and the block-normalized algorithm in regular case.

\subsection{$1$-normalized Gram matrix}
\begin{prop}\label{normprocess1}
	Let  $\mathfrak{p}=(p_1,\cdots,p_m)$ be an $m$-tuple of pairwise distinct points in $\bp(V_+)$.  Then the equivalence class of Gram matrices associated to $\mathfrak{p}$ contains a matrix $G= (g_{ij})$ with
	$$g_{ii} = 1,\ i=1,\cdots, m,\   g_{1j}\geq 0,
	j=2,\cdots, m.$$
\end{prop}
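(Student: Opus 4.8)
The plan is to start from an arbitrary lift $\p=(\p_1,\cdots,\p_m)$ of $\mathfrak{p}$ and to exhibit a diagonal matrix $D={\rm diag}(\mu_1,\cdots,\mu_m)$, with $\mu_i\in\bh-\{0\}$, such that $G(\p D)$ has the required form. By (\ref{isolift1}) we have $G(\p D)=D^*G(\p)D$, so everything comes down to a good choice of the $\mu_i$. Since every $p_i$ lies in $\bp(V_+)$, each diagonal entry $g_{ii}=\langle\p_i,\p_i\rangle$ of $G(\p)$ is a positive real; the $(i,i)$-entry of $D^*G(\p)D$ is $\overline{\mu_i}g_{ii}\mu_i=g_{ii}|\mu_i|^2$ because $g_{ii}$ is real, so it equals $1$ exactly when $|\mu_i|=1/\sqrt{\langle\p_i,\p_i\rangle}$. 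I therefore write $\mu_i=\nu_i\big/\sqrt{\langle\p_i,\p_i\rangle}$ with $\nu_i\in{\rm Sp}(1)$ still to be chosen.

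It remains to pick the phases $\nu_i$ so that the first row becomes real and non-negative. Set $\nu_1=1$, so $\mu_1=1\big/\sqrt{\langle\p_1,\p_1\rangle}$ is a positive real and $\overline{\mu_1}=\mu_1$. The $(1,j)$-entry of $D^*G(\p)D$ is then
$$\overline{\mu_1}\,g_{1j}\,\mu_j=\frac{g_{1j}\,\nu_j}{\sqrt{\langle\p_1,\p_1\rangle\,\langle\p_j,\p_j\rangle}},\qquad\text{where }g_{1j}=\langle\p_j,\p_1\rangle.$$
If $g_{1j}=0$, put $\nu_j=1$; if $g_{1j}\neq 0$, put $\nu_j=\overline{g_{1j}}\big/|g_{1j}|\in{\rm Sp}(1)$, so that $g_{1j}\nu_j=g_{1j}\overline{g_{1j}}\big/|g_{1j}|=|g_{1j}|>0$. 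With $D={\rm diag}(\mu_1,\cdots,\mu_m)$ so defined, $G(\p D)$ has all diagonal entries equal to $1$ and first row (hence, by Hermitian symmetry, also first column) real and non-negative, which is the matrix $G$ claimed in the statement.

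I do not expect any genuine obstacle here: the one point worth isolating is that positive real scalars lie in the centre of $\bh$, so the normalization of the moduli $|\mu_i|$ and that of the phases $\nu_i$ are independent of one another and may be carried out in either order; the pairwise distinctness of the $p_i$ enters only to ensure that the lifts $\p_i$ are nonzero. This normalization fixes only the first row and column of the Gram matrix — the residual indeterminacy in the remaining entries is precisely what the rotation-normalized and block-normalized algorithms of the subsequent subsections are designed to eliminate.
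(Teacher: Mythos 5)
Your proof is correct and follows essentially the same route as the paper: rescale each lift by $1/\sqrt{\langle\p_i,\p_i\rangle}$ to put $1$ on the diagonal, then right-multiply $\p_j$ by the unit quaternion $\overline{g_{1j}}/|g_{1j}|$ (the paper's $\lambda_j=\langle\p_1,\p_j\rangle/|\langle\p_1,\p_j\rangle|$ is the same quaternion) to make the first row non-negative. The only difference is that the paper additionally right-multiplies all lifts by a common $\lambda_1\in{\rm Sp}(1)$ so that $g_{23}$ lands in $\bc$ with non-negative imaginary part --- a further normalization not demanded by the statement, which your version correctly defers to the later rotation-normalized step.
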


\begin{proof}  Let $\p=(\p_{1},\cdots, \p_{m})$ be an arbitrary lift of $\mathfrak{p}$.   We want to obtain  a diagonal matrix $D_1$  such that  $G(\p D_1)$  is the desired Gram matrix.
	
	We may assume that  $\langle \p_{i}, \p_{i}\rangle=1$ by noticing that   $$\langle \p_{i}\lambda_i, \p_{i}\lambda_i\rangle=1,  \ \mbox{for}\  \lambda_i=\sqrt{\frac{1}{\langle \p_{i}, \p_{i}\rangle} }.$$
	For $i=2,\cdots,m$, let
	\begin{equation}\label{findli2}\lambda_i=\left\{
	\begin{array}{ll}
	\frac{\langle {\p}_{1},{\p}_{i} \rangle}{|\langle {\p}_{1},{\p}_{i}\rangle|}, & \hbox{provided}\   \langle{\p}_{1},{\p}_{i}\rangle\neq 0;\\
	1, &    \hbox{otherwise}.
	\end{array}
	\right.	\end{equation}
	Then   there exists a  $\lambda_1\in {\rm Sp}(1)$  such that  $\bar{\lambda_1}\bar{\lambda_3}\langle \p_{2},\p_{3}\rangle \lambda_2\lambda_1$ is a complex number with no-negative imaginary part if $\langle \p_{2},\p_{3}\rangle\neq 0$.  Then  $G(\p_{1}\lambda_1, \p_{2}\lambda_2\lambda_1,\cdots, \p_{m}\lambda_m\lambda_1)$ is the desired Gram matrix.
	In other words, $G(\p D_1)$  is the desired Gram matrix with
\begin{equation}\label{d-1} D_1={\rm diag}\Big(\sqrt{\frac{1}{\langle \p_{1}, \p_{1}\rangle} }\lambda_1, \sqrt{\frac{1}{\langle \p_{2}, \p_{2}\rangle} }\lambda_2\lambda_1,\cdots, \sqrt{\frac{1}{\langle \p_{m}, \p_{m}\rangle} }\lambda_m\lambda_1\Big).\end{equation}
\end{proof}

\begin{dfn}
	The Gram matrix  $G$ as in Proposition \ref{normprocess1} of the form
	\begin{equation}\label{semimatrixp}
	G=(g_{ij})=\left(
	\begin{array}{cccccc}
	1 & g_{12} & g_{13}  & g_{14}  & \cdots & g_{1m}  \\
	g_{12}& 1 & g_{23} & g_{24}&\cdots & g_{2m} \\
	g_{13} & \overline{g_{23}} & 1 & g_{34}&\cdots & g_{3m}  \\
	g_{14} & \overline{g_{24}} & \overline{g_{34}} & 1&\cdots & g_{4m}\\
	\vdots &  \vdots  &  \vdots  &  \vdots &\ddots &  \vdots \\
	g_{1m} & \overline{g_{2m}} & \overline{g_{3m}} & \overline{g_{4m}}&\cdots & 1\\
	\end{array}
	\right)
	\end{equation}
	is called  the {\it  1-normalized Gram matrix}.
\end{dfn}

The following result can be shown similarly  as  Proposition \ref{prop-condgn}. \begin{thm}(\cite[Propsition 3.2]{cungus12fr})\label{thmcondgp}
	Let $G= (g_{ij})$ be  a  Hermitian $m\times m$-matrix, $m>2$  with
	$$g_{ii} = 1,\ i=1,\cdots, m,\   g_{1j}\geq 0,
	j=2,\cdots, m.$$  Let $i(G)=(n_+,n_-,n_0)$.
	Then $G$ is a  1-normalized Gram matrix  associated with  an $m$-tuple	of pairwise  distinct  points in  $\bp(V_+)$   if and  only if
	\begin{equation}\label{inercondpg} 1\leq n_++n_-\leq n+1,n_+\le  n, \ n_-\le 1 , \  n_++n_-+n_0=m.\end{equation}
\end{thm}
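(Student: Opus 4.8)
The plan is to mirror the proof of Proposition \ref{prop-condgn}, replacing the semi-normalized normal form by the $1$-normalized one and the signature datum $n_-=1$ by the weaker $n_-\le 1$. \textbf{Necessity} is essentially just Theorem \ref{thm-inertia}: if $G=G(\p)$ for a lift $\p=(\p_1,\dots,\p_m)$ of an $m$-tuple of pairwise distinct points in $\bp(V_+)$, set $V=\mathrm{span}\{\p_1,\dots,\p_m\}$ and $\dim_q V=k+1$. Part (2) of Theorem \ref{thm-inertia} gives $k\le n_++n_-\le k+1$, $n_+\le n$, $n_-\le 1$ and $n_++n_-+n_0=m$; moreover $g_{11}=1>0$ (equivalently $\langle\p_1,\p_1\rangle>0$) forces $n_+\ge 1$, hence $n_++n_-\ge 1$, and $k+1=\dim_q V\le n+1$ gives $n_++n_-\le n+1$ (which, of course, also follows directly from $n_+\le n$ and $n_-\le 1$). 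Thus (\ref{inercondpg}) holds.

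For \textbf{sufficiency}, let $G=(g_{ij})$ be Hermitian of the form (\ref{semimatrixp}) with $i(G)=(n_+,n_-,n_0)$ satisfying (\ref{inercondpg}). Pick $S\in\mathrm{GL}(m,\bh)$ with $S^*GS=B$, where $B$ is diagonal with $b_{ii}=1$ for $1\le i\le n_+$, $b_{ii}=-1$ for $n_+<i\le n_++n_-$ (at most one such index, since $n_-\le 1$), and $b_{ij}=0$ otherwise. Let $A$ be the $(n+1)\times m$ matrix whose $l$-th column is $\e_l$ for $1\le l\le n_+$ (legitimate because $n_+\le n$), is $\e_{n+1}$ for $l=n_++1$ when $n_-=1$, and is the zero column otherwise. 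Then $A^*J_bA=B$, so with $\p=AS^{-1}$ one gets $G(\p)=\p^*J_b\p=(S^*)^{-1}A^*J_bAS^{-1}=(S^*)^{-1}BS^{-1}=G$; since the $j$-th diagonal entry of $G(\p)$ is $\langle\p_j,\p_j\rangle=1>0$, every column of $\p$ is a positive vector, and $g_{1j}\ge 0$ is built into $G$, so $\p$ is a $1$-normalized lift. The rest of the verification is word-for-word that of Proposition \ref{prop-condgn}.

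\emph{The main obstacle} is guaranteeing that the constructed $m$-tuple is \emph{pairwise distinct}. Over $\bc$ (or for isotropic points, via Proposition \ref{prop-1.1}) this is automatic once the normalization entries are fixed, but over $\bh$ it is genuinely more delicate: two rows of $G$ being left-proportional is consistent with the inertia conditions even when $G$ is rank-deficient, and then the naive lift has $\p_i=\bar\mu\,\p_j$ for a unit quaternion $\mu$, whose projections $p_i,p_j$ coincide precisely when $\mu$ commutes with the entries of $\p_j$ in the appropriate way. I would handle this exactly where such a coincidence can occur — which, by Theorem \ref{thm-inertia}, is the parabolic/degenerate regime $n_-=0$, $n_++n_-<n+1$ — by replacing $\mathrm{span}(\p)$ with a parabolic subspace one dimension larger: the orthogonal complement of $\mathrm{span}(\p)$ in $\bh^{n,1}$ then contains a null vector $\w$ with $\langle\w,\p_k\rangle=0$ for all $k$, and substituting $\p_i\mapsto\p_i+\w c_i$ with distinct quaternions $c_i$ separates the coincident points while, because $\langle\w,\w\rangle=\langle\w,\p_k\rangle=0$, leaving $G(\p)$ unchanged. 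This is precisely the ``stratiform'' loss of configuration information in the non-regular case signalled in the introduction; carrying the bookkeeping out case-by-case on $n_0$ finishes the proof, the remaining steps being identical to those of Proposition \ref{prop-condgn}.
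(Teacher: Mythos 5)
Your necessity argument, and the core of your sufficiency construction (diagonalize $S^*GS=B$, realize $B$ as $A^*J_bA$ with standard basis columns, set $\p=AS^{-1}$), are exactly the route the paper intends: it offers no proof beyond ``can be shown similarly as Proposition \ref{prop-condgn}''. You are also right that the one genuinely new difficulty is pairwise distinctness of the projections, which the paper passes over in silence. But your repair of that difficulty has a gap. The span of the constructed $\p=AS^{-1}$ is $\mathrm{span}\{\e_1,\dots,\e_{n_+}\}$ (together with $\e_{n+1}$ if $n_-=1$), a nondegenerate subspace, so two columns $\p_i,\p_j$ have the same projection exactly when columns $i$ and $j$ of $G$ are right-proportional by a unit quaternion --- and an abstract $G$ satisfying (\ref{inercondpg}) can have such a pair of columns for \emph{any} admissible signature, not only in the parabolic regime. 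Your appeal to Theorem \ref{thm-inertia} tells you when a tuple of genuinely distinct positive points can produce proportional columns; it says nothing about which matrices $G$ you must handle. Moreover, the null vector $\w\in\mathrm{span}(\p)^{\perp}$ needed for the perturbation $\p_i\mapsto\p_i+\w c_i$ exists only when $n_-=0$ \emph{and} $n_+\le n-1$: if $n_-=1$ the orthogonal complement of the hyperbolic span is positive definite, and if $n_-=0$, $n_+=n$ it is the negative line $\e_{n+1}\bh$; in neither case is there a null direction to move into, so even inside your stated regime $n_-=0$, $n_++n_-<n+1$ the construction fails at $n_+=n$.

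This is not merely a defect of your fix; in those cases the statement itself fails as printed. Take $n=2$, $m=3$ and
\begin{equation*}
G=\begin{pmatrix}1&1&2\\1&1&2\\2&2&1\end{pmatrix},
\end{equation*}
which is Hermitian, satisfies the normalization $g_{ii}=1$, $g_{1j}\ge 0$, and has $i(G)=(1,1,1)$, so all of (\ref{inercondpg}) holds. For any $\p$ with $G(\p)=G$, Theorem \ref{thm-inertia}(2) with $n_-=1$ forces $V=\mathrm{span}(\p)$ to be hyperbolic of quaternionic dimension $n_++1=2$, hence nondegenerate; since the first two columns of $G$ are identical, $\p_1-\p_2$ lies in the (trivial) radical of $V$, so $\p_1=\p_2$ and the projections coincide. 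Thus $G$ meets every hypothesis of the theorem yet is not the Gram matrix of any tuple of \emph{pairwise distinct} positive points. Sufficiency therefore needs an additional hypothesis (for instance: whenever two columns of $G$ are right-proportional by a unit quaternion, one must have $n_-=0$ and $n_+\le n-1$), under which your parabolic-inflation argument does go through; without such a hypothesis no argument can close the gap.
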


\begin{rem}
 The number $1$ of 1-normalized Gram matrix is  equivoke with meaning  that  we  normalize the Gram matrix in the view point  standing in our ordered  position $1$, as well as  with the meaning  that  we normalize the points in $\bp(V_+)$ with the properties $\langle \p_{i}, \p_{i}\rangle=1$.    It specifies the entries in row 1 (together column 1)  and leaves  entries in  other rows  ambiguity (even in the complex case).  This  phenomenon motivates the development of  block-normalized algorithm.  By the content in Section \ref{sec2.3ine}, we can state similar Theorem \ref{thmcondgp}  for other  normalized form of Gram matrix because of the invariability of  (\ref{inercondpg}). So we can focus on constructing of the parameter space in the sequence.
\end{rem}

\subsection{The structure of Gram matrices of points  on  $\bp(V_+)$ }
In what follows, we assume that $G(\p)$ is already  a 1-normalized Gram matrix.
The following proposition may be regarded as a generalization of  Theorem \ref{thm-twpo} (ii)
\begin{prop}\label{polar-inter-n}
	Let  $\p=(\p_1,\cdots,\p_t)$ be a $t$-tuple of pairwise distinct points in $\bp(V_+)$  satisfying   $$\langle {\p}_i, {\p}_j\rangle=1,\ i,j=1,\cdots t$$ and  $V={\rm span}\{\p_1,\cdots,\p_l\}$.
	Then there exists a unique  fibre $\z\bh\in V_0$ such that  $$V\subset \z^{\perp},  V\cap V_0=\z\bh, V\cap V_-=\emptyset.  $$
	In fact  $$\z=\p_2-\p_1, V={\rm span}\{\p_1,\p_2\}={\rm span}\{\z,\p_1\}.$$
\end{prop}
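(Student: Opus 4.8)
The plan is to exploit the fact that \emph{every} Hermitian product among the lifts equals $1$, which makes all inner products inside $V$ completely explicit. First I would take an arbitrary $\bu=\sum_i\p_i\lambda_i\in V$ and compute, using $\langle\p_i,\p_j\rangle=1$,
\begin{equation*}
\langle\bu,\bu\rangle=\sum_{i,j}\overline{\lambda_j}\,\langle\p_i,\p_j\rangle\,\lambda_i=\Big(\sum_i\overline{\lambda_i}\Big)\Big(\sum_i\lambda_i\Big)=\Big|\sum_i\lambda_i\Big|^2\geq 0.
\end{equation*}
This immediately yields $V\cap V_-=\emptyset$, and shows $\bu\in V_0$ exactly when $\sum_i\lambda_i=0$. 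Eliminating $\lambda_1=-\sum_{i\geq 2}\lambda_i$, the null part of $V$ is therefore $V\cap(V_0\cup\{0\})={\rm span}\{\p_i-\p_1:i\geq 2\}$.

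The crux is to identify this span with a single quaternionic line. I would note first that $\p_i-\p_1\neq 0$ (otherwise $p_i=p_1$ in $\bp(V_+)$, contradicting pairwise distinctness), and that for all $i,j\geq 2$
\begin{equation*}
\langle\p_i-\p_1,\p_j-\p_1\rangle=\langle\p_i,\p_j\rangle-\langle\p_i,\p_1\rangle-\langle\p_1,\p_j\rangle+\langle\p_1,\p_1\rangle=1-1-1+1=0,
\end{equation*}
so each $\p_i-\p_1$ is a nonzero null vector. Applying Proposition~\ref{prop-1.1} to the pair $(\p_2-\p_1,\ \p_i-\p_1)$ --- both null, with vanishing Hermitian product --- forces $\p_i-\p_1=(\p_2-\p_1)\mu_i$ for some $\mu_i\in\bh$. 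Hence, with $\z=\p_2-\p_1$, we obtain ${\rm span}\{\p_i-\p_1:i\geq 2\}=\z\bh$, and therefore $V\cap V_0=\z\bh$; uniqueness of this fibre is then automatic, since any null fibre contained in $V$ lies in $V\cap V_0=\z\bh$.

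It then remains to collect the last assertions. From $\langle\p_i,\z\rangle=\langle\p_i,\p_2\rangle-\langle\p_i,\p_1\rangle=1-1=0$ for every $i$ we get $V\subset\z^\perp$; from $\p_i=\p_1+\z\mu_i$ (for $i\geq 2$, and trivially for $i=1$) we get $V\subseteq{\rm span}\{\p_1,\z\}$, while $\p_1,\z=\p_2-\p_1\in V$ gives the reverse inclusion, so $V={\rm span}\{\z,\p_1\}={\rm span}\{\p_1,\p_2\}$; one checks these two spanning vectors are independent ($\z$ null, $\p_1$ positive), so $\dim_q V=2$. I do not expect a genuine obstacle here: the proposition is essentially a direct computation once the relation $\langle\p_i,\p_j\rangle=1$ is substituted. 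The only step beyond bookkeeping is the appeal to Proposition~\ref{prop-1.1} to push all the difference vectors onto a common fibre, and the one point to watch is that projective distinctness of the $p_i$ is precisely what guarantees $\p_i-\p_1\neq 0$.
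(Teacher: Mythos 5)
Your proof is correct and follows essentially the same route as the paper's: both rest on the nonnegativity of the Hermitian form on $V$, identify the null vectors as differences of the $\p_i$, and invoke Proposition~\ref{prop-1.1} to force all those differences onto the single fibre $(\p_2-\p_1)\bh$. The only (cosmetic) difference is that you compute $\langle\bu,\bu\rangle=\left|\sum_i\lambda_i\right|^2$ for a general element of $V$ in one stroke, whereas the paper first treats ${\rm span}\{\p_1,\p_2\}$ and then bootstraps to all of $V$; this makes your version marginally more direct but does not change the substance.
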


\begin{proof}
	Let $\bu=\p_1\lambda_1+\p_2\lambda_2\in V$.
	Then \begin{equation}\langle \bu, \bu \rangle=|\lambda_1|^2+|\lambda_2|^2+2\Re(\bar{\lambda}_2\lambda_1)\geq 0.\end{equation}
	Note that $\langle \bu, \bu \rangle=0$ if and only if $\lambda_1=-\lambda_2$.   Hence  $(\p_2-\p_1)\bh$ is the unique fibre of the intersection  ${\rm span}\{\p_1,\p_2\} \cap V_0$  and ${\rm span}\{\p_1,\p_2\}\cap V_-=\emptyset$.  Noting that $\langle \p_i-\p_j, \p_i-\p_j\rangle=0$ and   $\langle  \p_i-\p_j,  \p_2-\p_1)\rangle=0$ for $i\neq j$,  by Proposition \ref{prop-1.1}  we have $(\p_2-\p_1)\bh=(\p_i-\p_j)\bh$.  	  Since  $\langle \p_i, (\p_1-\p_2) \rangle=0, i=1,\cdots, t$, we have   $V\subset (\p_2-\p_1)^{\perp}$.   It follows from  $\p_i-\p_1\in(\p_2-\p_1)\bh$ that  there exist  $\lambda_i $  such that   $$\p_i=\p_1+(\p_2-\p_1)\lambda_i=\p_2\lambda_i+\p_1(1-\lambda_i),i=1,\cdots,t.$$   This implies that $$V={\rm span}\{\p_1,\p_2\}={\rm span}\{z,\p_1\}$$ and therefore
	$V\cap V_0=\z\bh, V\cap V_-=\emptyset. $
\end{proof}
 The information of $\lambda_i$  disappears in the  sub Gram matrix $G((\p_1,\cdots,\p_t))$.  Moreover, such information  can not be rebuilt  through  the relationships  with other points in some situations.    This implies that the Gram matrix loses the configuration information of such a $t$-tuple.
We provide the following explicit example in ball model  to  illustrate  this phenomenon. We remind that Cunha et al provided  a proof of similar example involving the fixed point theory of complex hyperbolic isometries  in \cite[Section 5]{cungus12fr}.
\begin{example}\label{lostinf}
	Let $\z=(1,0,1)^T\in V_0$ and $\p_1=(0,1,0)\in V+$.
	Let $\p_i=\p_1+i\z,i=2,3$.  Then   $$G((\p_1,\p_2,\p_3))=G((\p_3,\p_2,\p_1))=\left(
	\begin{array}{ccc}
	1 &1&1\\
	1 &1&1\\
	1 &1&1
	\end{array}
	\right).
	$$
		We claim that  $(\bp(\p_1),\bp(\p_2),\bp(\p_3))$ and $(\bp(\p_3),\bp(\p_2),\bp(\p_1))$  are not $\pspt$-congruent.
\end{example}
	\begin{proof}[Proof of the Claim]  Suppose that the two triples  above are   $\pspt$-congruent. Then there exist  a $g\in \spt$ such that $$g\p_1=\p_3\lambda_1, g\p_2=\p_2\lambda_2, g\p_3=\p_1\lambda_3.$$   It follows from $$\langle g\p_i, g\p_i\rangle=\langle g\p_i, g\p_j\rangle=\langle \p_i, \p_i\rangle=1$$ that  $\lambda_i\in {\rm Sp}(1)$ and $\bar{\lambda_j}\lambda_i=1$, and therefore  $\lambda_1=\lambda_2=\lambda_3:=\lambda$.
		Hence  $$g2\z=g(\p_2-\p_1)=(\p_2-\p_3)\lambda=-\z \lambda,$$ which contradicts $$g\z=g(\p_3-\p_2)=(\p_1-\p_2)\lambda=-\z 2\lambda.$$
	\end{proof}

If $V$ is parabolic,  by Proposition \ref{polar-inter-n}  we can refine Theorem \ref{thm-inertia} as follows.
\begin{prop}\label{deg-case}
	Let $\p=(\p_1,\cdots, \p_m)\in  \bh_{n+1,m}$, $V={\rm span}\{\p_1,\cdots, \p_m\}$ and
	$$\dim_q V=k+1,\ i(G(\p))=i(\p^*J\p)=(k,0,m-k).$$
	Then $S(m)=\{1,\cdots,m\}$ has a partition:
\begin{equation}S_i=\{s_{i1},\cdots,s_{it_{i}}\},s_{i1}<\cdots<s_{it_{i}},i=1,\cdots,k\end{equation}
	with the properties  \begin{equation}\label{pstrcture} S(m)=\bigcup_{i=1}^k S_i; \langle \p_{s_{il}}, \p_{s_{id}}\rangle=1,1\leq l,d\leq t_i; \langle\p_{s_{il}}, \p_{s_{jd}}\rangle=0,i\neq j\end{equation}
	and in each $$\p_{S_i}:=(\p_{s_{i1}},\cdots,\p_{s_{it_i}})$$ we can not partition likewise as in (\ref{pstrcture}).

	There exists a common  $\z_0\in V_0$ such that
	$\p\in \z_0^{\perp}$ and \begin{equation}\label{vicoordinates}\p_{s_{il}}=\p_{s_{i1}}+\z_0 \lambda_{il}, 1<l\le  {\rm Card}(S_i), i=1,\cdots,k,\end{equation}
	where ${\rm Card}(S_i)$ is the cardinality of $S_i$. We define \begin{equation}\label{spacevi}V_i={\rm span}\{\p_{s_{i1}},\cdots, \p_{s_{it_i}}\}={\rm span}\{\p_{s_{i1}},\z_0\}, i=1,\cdots,k.\end{equation}
\end{prop}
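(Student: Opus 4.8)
The plan is to locate the common isotropic line, reduce $G(\p)$ to a positive-definite Gram matrix, read off the block decomposition of $S(m)$, and then apply Proposition~\ref{polar-inter-n} inside each block. For the first part: since $V$ is parabolic, the argument in the proof of Theorem~\ref{thm-inertia} gives $V\cap V_-=\emptyset$ and $V\cap V_0=\z_0\bh$ for a single fibre $\z_0\bh$, and exhibits mutually orthogonal positive lines $\q_1,\dots,\q_k$ in $\z_0^{\perp}\cap V$ with $\mathrm{span}\{\z_0,\q_1,\dots,\q_k\}\subseteq\z_0^{\perp}\cap V\subseteq V$; since the left‑hand span has quaternionic dimension $k+1=\dim_q V$, all three spaces coincide, so $V\subseteq\z_0^{\perp}$ and in particular $\p\in\z_0^{\perp}$, which is one of the asserted conclusions. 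Rescaling so that $\langle\q_j,\q_j\rangle=1$ and writing $\p_i=\z_0 a_i+\sum_{j=1}^{k}\q_j b_{ji}$, the isotropy of $\z_0$ and its orthogonality to all of $V$ leave only the $\q$-coordinates in the inner products:
\begin{equation*}
\langle\p_i,\p_l\rangle=\sum_{j=1}^{k}\overline{b_{jl}}\,b_{ji}=\langle\beta_i,\beta_l\rangle,\qquad \beta_i:=(b_{1i},\dots,b_{ki})^{T}\in\bh^{k}.
\end{equation*}
Thus $G(\p)$ is the Gram matrix of the system $\{\beta_i\}$ for the standard positive definite form on $\bh^{k}$; the $1$-normalization gives $\langle\p_i,\p_i\rangle=1$, hence $|\beta_i|=1$, and the rank $n_++n_-=k$ forces the $\beta_i$ to span $\bh^{k}$.

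Next I would extract the partition. Declare $i\sim j$ when $\langle\p_i,\p_j\rangle\neq0$, and let $S_1,\dots$ (ordered by increasing minima) be the classes of the transitive closure of $\sim$; orthogonality between distinct classes is then automatic, which is the second relation in (\ref{pstrcture}). The heart of the matter, and the step I expect to be the main obstacle, is to show that within one class every inner product equals $1$ — equivalently, that all $\beta_i$ in one class coincide — and that the number of classes is exactly $k$. This is precisely where parabolicity must be used decisively: in an arbitrary positive subspace $|\langle\beta_i,\beta_j\rangle|$ may be anything in $[0,1]$, so one must invoke $V\cap V_-=\emptyset$ together with $\dim_q V=k+1$ to rule out $0<|\langle\p_i,\p_j\rangle|<1$, and only then does the $1$-normalization (and a rotation-normalization when $i,j\neq1$) pin the entry to $1$ rather than merely to the unit sphere. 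This is the $m$-point analogue of Proposition~\ref{polar-inter-n}: once the entries of a block are all $1$, the $\beta_i$ of that block share a single value, the distinct block-values are unit vectors that are pairwise orthogonal across blocks, and since they span $\bh^{k}$ they form an orthonormal basis, so there are exactly $k$ blocks, each manifestly non-refinable.

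Finally, granting the block structure, I would identify $V_i$. Applying Proposition~\ref{polar-inter-n} to $\p_{S_i}$ (whose pairwise inner products are all $1$, at least one block having $\mathrm{Card}(S_i)\ge2$ since $\dim_q V=k+1>k$) shows that $(\p_{s_{i2}}-\p_{s_{i1}})\bh$ is the unique null line of $V_i=\mathrm{span}(\p_{S_i})$ and that $V_i=\mathrm{span}\{\p_{s_{i1}},\p_{s_{i2}}\}$. Since $\p_{s_{i2}}-\p_{s_{i1}}$ is an isotropic vector of $V$ and $V\cap V_0$ is the single fibre $\z_0\bh$, every such null line coincides with $\z_0\bh$, so $\p_{s_{i2}}-\p_{s_{i1}}=\z_0\lambda_{i2}$ for some $\lambda_{i2}\in\bh$. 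For $l>1$ the difference $\p_{s_{il}}-\p_{s_{i1}}$ is again isotropic, lies in $V$, and is nonzero because the points are pairwise distinct, hence lies in $\z_0\bh$; this gives $\p_{s_{il}}=\p_{s_{i1}}+\z_0\lambda_{il}$, i.e. (\ref{vicoordinates}), and consequently $V_i=\mathrm{span}\{\p_{s_{i1}},\z_0\}$ as in (\ref{spacevi}). Combined with $\p\in\z_0^{\perp}$ from the first step, this would complete the proof.
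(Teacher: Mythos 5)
Your architecture is the one the paper intends (the paper offers no written proof, only the remark that the statement is a refinement of Theorem \ref{thm-inertia} ``by Proposition \ref{polar-inter-n}''), and your first and last steps are correct and actually more complete than anything in the paper: the dimension count showing $V\subseteq\z_0^{\perp}$, the reduction of $G(\p)$ to the Gram matrix of unit vectors $\beta_i$ spanning a positive definite $\bh^{k}$, and---granted the $0$--$1$ block structure---the derivation of (\ref{vicoordinates}) and (\ref{spacevi}) from Proposition \ref{polar-inter-n} plus uniqueness of the null fibre. The problem is the step you yourself flag as ``the heart of the matter'': you assert that parabolicity rules out $0<|\langle\p_i,\p_j\rangle|<1$ but give no argument, and in fact no argument exists, because the claim is false once $k\geq 2$. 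Your own reduction makes this transparent: $G(\p)$ is the Gram matrix of $m$ arbitrary unit vectors spanning $\bh^{k}$, and for $k\geq 2$ such vectors need not be pairwise parallel or orthogonal.

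Concretely, in $\bh^{3,1}$ with $J=J_s$ take $\z_0=(1,0,0,0)^T$, $\p_1=(0,1,0,0)^T$, $\p_2=(0,\cos\theta,\sin\theta,0)^T$ with $0<\theta<\pi/2$, and $\p_3=\p_1+\z_0$. These are pairwise distinct positive points, $V={\rm span}\{\p_1,\p_2,\p_3\}={\rm span}\{\z_0,(0,1,0,0)^T,(0,0,1,0)^T\}$ has $\dim_qV=3$, $V\cap V_-=\emptyset$, $V\cap V_0=\z_0\bh$, and
\begin{equation*}
G(\p)=\left(
\begin{array}{ccc}
1 & \cos\theta & 1\\
\cos\theta & 1 & \cos\theta\\
1 & \cos\theta & 1
\end{array}
\right),\qquad i(G(\p))=(2,0,1),
\end{equation*}
so the hypotheses hold with $k=2$, $m=3$; yet $g_{12}=\cos\theta\notin\{0,1\}$, and since preserving $g_{ii}=1$ forces unit rescalings, no change of lifts alters $|g_{12}|$. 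Hence no partition as in (\ref{pstrcture}) exists, and both your proof and the proposition itself break at the same point. The statement is salvageable exactly when $k=1$ (automatic in the complex hyperbolic plane setting of \cite{cungus12fr}, where $k=2$ would force $V=\bc^{2,1}$, which is not parabolic), or under the added hypothesis that every off-diagonal entry of the $1$-normalized Gram matrix has modulus $0$ or $1$; in either of those cases your transitive-closure argument followed by Proposition \ref{polar-inter-n} does complete the proof as you describe.
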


\medskip

If $V$ is not parabolic, we can refine Theorem \ref{thm-inertia} as follows.
\begin{prop}\label{regula-case}
	Let $\p=(\p_1,\cdots, \p_m)\in  \bh_{n+1,m}$, $V={\rm span}\{\p_1,\cdots, \p_m\}$ and
	$$\dim_q V=k+1,\ i(G(\p))=i(\p^*J\p)=(k,1,m-k-1)\; \mbox{or}\ (k+1,0,m-k-1).$$
	Then $S(m)=\{1,\cdots,m\}$ has a partition:
\begin{equation}\label{partregu}S_i=\{s_{i1},\cdots,s_{it_{i}}\},s_{i1}<\cdots<s_{it_{i}} ,i=1,\cdots,s \end{equation}
	with the properties
	\begin{equation} S(m)=\bigcup_{i=1}^s S_i;\langle \p_{s_{il}}, \p_{s_{jd}}\rangle=0,i\neq j \end{equation}
	and in each $\p_{S_i}:=(\p_{s_{i1}},\cdots,\p_{s_{it_i}})$ we can not partition likewise as above.
\end{prop}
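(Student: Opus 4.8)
\textbf{Proof proposal for Proposition \ref{regula-case}.}

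The plan is to mimic the partition philosophy of Proposition \ref{deg-case}, but now build the blocks out of an \emph{orthogonality graph} rather than out of a common null line. First I would form the graph $\Gamma$ on vertex set $S(m)=\{1,\cdots,m\}$ in which $i$ and $j$ are joined by an edge exactly when $\langle \p_i,\p_j\rangle\neq 0$ (this is well defined up to the scalars $\lambda_i$ in (\ref{isolift1}) since $\langle\p_i\lambda_i,\p_j\lambda_j\rangle=\bar\lambda_i\langle\p_i,\p_j\rangle\lambda_j$ vanishes iff $\langle\p_i,\p_j\rangle$ does, so the graph is an invariant of the $\pspn$-congruence class). Let $S_1,\cdots,S_s$ be the connected components of $\Gamma$, listed so that the smallest index of $S_1$ is less than that of $S_2$, etc., and write the elements of each $S_i$ in increasing order as $s_{i1}<\cdots<s_{it_i}$; this is the partition (\ref{partregu}). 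By construction $\langle \p_{s_{il}},\p_{s_{jd}}\rangle=0$ whenever $i\neq j$, which is the first asserted property, and the second property — that no $\p_{S_i}$ admits a further nontrivial such partition — is immediate because a connected graph cannot be split into two nonempty vertex sets with no edges between them.

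There is one point that needs care and is, I expect, the only real content of the argument: one must check that this combinatorial partition is actually \emph{consistent} with the hypothesis $\dim_q V=k+1$ and $i(G(\p))=(k,1,m-k-1)$ or $(k+1,0,m-k-1)$, i.e.\ that the blocks carry the dimension and inertia additively. If $V_i:={\rm span}\{\p_{s_{i1}},\cdots,\p_{s_{it_i}}\}$, then because the $V_i$ are mutually orthogonal we have $\bh^{n,1}\supseteq V=V_1\oplus\cdots\oplus V_s$ (orthogonal direct sum), and by the block-diagonal formula (\ref{orthcount}) together with Proposition \ref{pro-inertia} we get $i(G(\p))=\sum_i i(G(\p_{S_i}))$ and $\dim_q V=\sum_i\dim_q V_i$. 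Each summand $i(G(\p_{S_i}))$ has, by Theorem \ref{thm-inertia} applied inside $V_i$, non-negative entries with $n_-\le 1$; since the total negative index is at most $1$, at most one block can be hyperbolic and the rest are elliptic. This is exactly the refinement that makes the block-normalized algorithm work in the sequel, and it is worth recording, though for the bare statement of the proposition only the existence of the partition with the two displayed properties is needed.

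The main obstacle, such as it is, is purely bookkeeping: one has to be sure that forming $\Gamma$ from \emph{any} lift gives the same components, which follows from the remark above that the scalars $\lambda_i$ do not create or destroy zero entries; and one has to phrase "cannot be partitioned likewise" correctly so that it matches connectedness of $\Gamma$ rather than some stronger irreducibility. I would also note explicitly that unlike the parabolic case there is no common vector $\z_0$ and no coordinates of the form (\ref{vicoordinates}); the blocks are genuinely independent pieces, which is precisely why the Gram matrix retains the full congruence information (as remarked in the text preceding the proposition) and why the later parameter space is assembled block by block. Finally I would remark that the partition is canonical once the ordering convention on the $S_i$ is fixed, so no choice is involved, and that Proposition \ref{regula-case} together with Proposition \ref{polar-inter-n} and Theorem \ref{thm-twpo} will be the structural input for the block-normalized algorithm of Section \ref{sec7reg}.
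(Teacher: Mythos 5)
Your proof is correct: the paper states Proposition \ref{regula-case} without any explicit proof (it is presented as an immediate refinement of Theorem \ref{thm-inertia}), and your connected-components-of-the-orthogonality-graph argument is exactly the natural formalization of what the paper leaves implicit, with the lift-independence of the zero pattern and the identification of ``cannot be partitioned likewise'' with connectedness being the only points that need saying. Your side remarks on additivity of the inertia are also sound (in the non-parabolic case $V$ is nondegenerate, so the orthogonal sum of the $V_i$ is direct and no block can be parabolic), though, as you note, they are not needed for the statement itself.
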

It is  helpful to keep in mind that there are no relationships  among  the  blocked-entries  corresponding  to   each components  $\p_{S_i}$  in the diagonal matrix  $D$ in (\ref{isolift1}). This is the motivation of refinement of  Theorem  \ref{thm-inertia}.  Furthermore, when  $V$  is  not parabolic, we still need to  partition the components $S_i$  in some situations.

\section{Moduli space on $\bp(V_+)$ of case $m\geq 3$: non regular cases}\label{sec6ireg}

We will work on the Siegel domain in this section. We will  construct
invariants which describe the $\pspn$-congruence classes when $V$  is  parabolic.

\medskip

We first recall the  following fact of isometries in $\spn$ fixing $\infty$.
\begin{lem}(c.f.\cite[Lemma 3.3.1]{chegre74})\label{ginf}
	Let $\z_{\infty}=(1, 0, \,\cdots,\,0,0)^T$,  $\bp(\z_{\infty})=\infty$ and 	$$G_{\infty}=\{g\in \spn:
	g(\infty)=\infty\}.$$  Then   $g\in G_{\infty}$  is of the form
	\begin{equation}\label{gw}g=\left(
	\begin{array}{ccc}
	\lambda & \gamma^* & s\\
	0 & U & \beta \\
	0& 0 &\mu \\
	\end{array}
	\right), \end{equation}where $ \lambda,\mu,s\in \bh,\beta,\gamma\in \bh^{n-1}, U\in {\rm Sp}(n-1),  |\bar{\mu}\lambda|=1,\
	\;\Re(\bar{\mu}s)=-\frac{1}{2}\left|\beta\right|^2,\;\beta=-U\gamma \mu.$
\end{lem}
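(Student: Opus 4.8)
The plan is to read the block shape of $g$ off from the two facts that $g$ fixes $\infty$ and that $g$ preserves $\langle\cdot,\cdot\rangle=\cdot^{*}J_s\cdot$, and then to extract the scalar and matrix relations by comparing the blocks of $g^{*}J_sg$ with those of $J_s$.

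First I would note that $\z_{\infty}=\e_1$, so $g(\infty)=\infty$ means $g$ fixes the right-line $\z_{\infty}\bh$, i.e. $g\z_{\infty}=\z_{\infty}\lambda$ for some $\lambda\in\bh-\{0\}$; since $g\z_{\infty}$ is the first column of $g$ and $\z_{\infty}\lambda=(\lambda,0,\dots,0)^{T}$, the first column of $g$ equals $(\lambda,0,\dots,0)^{T}$. Next, a direct computation with $J_s$ gives $\langle\w,\z_{\infty}\rangle=\z_{\infty}^{*}J_s\w=w_{n+1}$, so $\z_{\infty}^{\perp}={\rm span}\{\e_1,\dots,\e_n\}$. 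Since $g$ preserves the form and fixes $\z_{\infty}\bh$, it preserves $\z_{\infty}^{\perp}$: if $\langle\w,\z_{\infty}\rangle=0$ then $\langle g\w,\z_{\infty}\lambda\rangle=\langle g\w,g\z_{\infty}\rangle=\langle\w,\z_{\infty}\rangle=0$, and $\langle g\w,\z_{\infty}\lambda\rangle=\bar\lambda\langle g\w,\z_{\infty}\rangle$ with $\lambda\neq 0$ forces $\langle g\w,\z_{\infty}\rangle=0$. Hence $g\e_j\in{\rm span}\{\e_1,\dots,\e_n\}$ for $j\le n$, so the last row of $g$ is $(0,\dots,0,\mu)$, and $g$ already has the block form (\ref{gw}) with $\gamma,\beta\in\bh^{n-1}$, $U\in\bh^{(n-1)\times(n-1)}$ and $\lambda,s,\mu\in\bh$.

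Then I would substitute (\ref{gw}) into $g^{*}J_sg=J_s$ and compare blocks: the $(1,3)$-block gives $\bar\lambda\mu=1$, hence $\bar\mu\lambda=1$ and $|\bar\mu\lambda|=1$; the $(2,2)$-block gives $U^{*}U=I_{n-1}$, i.e. $U\in{\rm Sp}(n-1)$; the $(2,3)$-block gives $\gamma\mu+U^{*}\beta=0$, i.e. $\beta=-U\gamma\mu$; and the $(3,3)$-block gives $2\Re(\bar\mu s)+|\beta|^{2}=0$, i.e. $\Re(\bar\mu s)=-\frac{1}{2}|\beta|^{2}$. The remaining blocks vanish automatically, which is precisely why passing through $\z_{\infty}^{\perp}$ first is convenient. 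Conversely, any $g$ of the form (\ref{gw}) obeying these relations satisfies $g^{*}J_sg=J_s$ by running the same block computation backwards and fixes $\infty$ because $g\z_{\infty}=\z_{\infty}\lambda$, so $G_{\infty}$ is exactly the set of such matrices.

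The computation is routine linear algebra; the one point requiring care is quaternionic non-commutativity — forming $g^{*}$ correctly (the conjugate-transpose carries the $(1,2)$-block $\gamma^{*}$ to the $(2,1)$-block $\gamma$, and so on) and keeping the factor orders in products such as $U^{*}\beta$ and $U\gamma\mu$. I expect that bookkeeping to be the only obstacle: there is no conceptual difficulty, and the orthogonal-complement step removes the need to verify the off-diagonal vanishing by hand.
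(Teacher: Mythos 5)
Your proof is correct; the paper itself gives no proof of this lemma (it is quoted from Chen--Greenberg), and your argument --- read off the first column from $g\z_{\infty}=\z_{\infty}\lambda$, deduce the vanishing last row from preservation of $\z_{\infty}^{\perp}=\{\w: w_{n+1}=0\}$, then compare blocks of $g^{*}J_sg=J_s$ --- is exactly the standard one. Note only that your $(1,3)$/$(3,1)$ blocks in fact yield the exact relation $\bar{\mu}\lambda=1$, which is strictly stronger than the $|\bar{\mu}\lambda|=1$ recorded in the lemma's statement, and it is your stronger version that is needed for the converse direction (sufficiency of the listed relations) to go through.
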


Let $\p=(\p_{1},\cdots, \p_{m})$ and $\q=(\q_{1},\cdots, \q_{m})$  be two ordered  $m$-tuples of pairwise  distinct
  points in  $\bp(V_+)$ such that $V(\p)$ and $V(\q)$ are parabolic.      Observe that  if  $\p$ and   $\q$  are  $\pspn$-congruent then
   they have the same structure  given by Proposition \ref{deg-case}. Since $\spn$  acts doubly transitively on  $\bhq$, we can further assume that $\p,\q\in \z_{\infty}^{\perp}$.    As showed  by  Example \ref{lostinf}, besides  the information of   structure,  other conditions  are needed  for $\p,\q$  being   $\pspn$-congruent.

\medskip

   In what follows, we assume that $m\geq 3$,   $V(\p)={\rm span}\{\p_1,\cdots, \p_m\}$ is parabolic and $V(\p) \subset \z_{\infty}^{\perp}$.
It is obvious that $$\z_{\infty}^{\perp}=(z_1,\cdots,z_n,0)^T:=(z_1,\alpha^T,0)^T.$$  Therefore the action of $g\in G_{\infty}$ on $\z_{\infty}^{\perp}$ can be expressed by
$$g:\left(
\begin{array}{c}
z_1\\
\alpha\\
0\\
\end{array}
\right)\to \left(
\begin{array}{c}
\lambda z_1+\gamma^*\alpha\\
U\alpha\\
0\\
\end{array}
\right).$$
The restriction of the Hermitian form $\langle ,\rangle$ on $\z_{\infty}^{\perp} $ is the usual inner product on $\bh^{n-1}$, i.e.,
$$\langle (k_1,\alpha_1,0)^T , (k_2,\alpha_2,0)^T\rangle=\alpha_2^*\alpha_1.$$
For  $g$ of the form (\ref{gw}), we define the map \begin{equation}\label{dfnpi}  \Pi:g\in G_{\infty}\to \tilde{g}=\left(
\begin{array}{cc}
\lambda & \gamma^* \\
0 & U  \\
\end{array}
\right)\in \tilde{G}_{\infty}.\end{equation}  Then  $\Pi$ is a homomorphism  with
$$\ker(\Pi)=\left\{\left(
	\begin{array}{ccc}
	1 & 0 & s\\
	0 & I_{n-1} & 0 \\
	0& 0 & 1 \\
	\end{array}
	\right)\ \mbox{with} \ \Re(s)=0\right\}$$
and its homomorphic image $\tilde{G}_{\infty}=\Pi(G_{\infty})$ is a subgroup    of   ${\rm GL}(n,\bh)$.   The action of  $G_{\infty}$ on $\z_{\infty}^{\perp} $ can be  expressed by the projection action of   $\tilde{G}_{\infty}$  on $\bh \bp^{n-1}=(z_1,\alpha^T)^T$.

Noting  Proposition \ref{deg-case} and  $G(\p)$  being  a 1-normalized Gram matrix, we have
	\begin{equation}\label{cooralph}\tilde{\p}_{s_{il}}=(k_{il},\alpha_i^T)^T,\ \mbox{for}\  s_{il}\in S_i \end{equation}
and  $$\alpha_i^*\alpha_i=1,  1\le i \le k.$$ Therefore  there exists a  $U\in   {\rm Sp}(n-1) $ such that $g={\rm diag}(1,U,1)\in  {\rm Sp}(n,1)$ satisfying \begin{equation}\label{univect}U(\alpha_1,\cdots, \alpha_k)=(\e_1,\cdots,\e_k),\end{equation}
where $\e_i, 1\le i \le k$ are  $k$ vectors in the standard basis of $\bh^{n-1}$.  Therefore we may further reformulate (\ref{cooralph}) as
	\begin{equation}\label{coorbasis}\tilde{\p}_{s_{il}}=(k_{il},e_i^T)^T,\ \mbox{for}\  s_{il}\in S_i.\end{equation}
In order to parameterize the moduli space, we  introduce the following map $\phi$  to give the corresponding  coordinates in $\bh\cup \infty$ for vectors in $V_i={\rm span}\{\p_{s_{i1}},\z_{\infty}\}$:
\begin{equation}\label{coordinates}\phi(\z_{\infty})=\infty, \phi(\tilde{\p}_{s_{il}})=k_{il}, 1\le l \le t_i; 1\le i \le k.\end{equation}

Let $h=\left(
\begin{array}{cc}
	\lambda & \gamma^* \\
	0 & U  \\
\end{array}
\right)$ with $U(\e_1,\cdots,\e_k)=(\e_1,\cdots,\e_k)$ and $\gamma=(c_1,\cdots,c_{n-1})^T$.
Note that\begin{equation}\label{transcor}\left(
\begin{array}{cc}
\lambda & \gamma^* \\
0 & U  \\
\end{array}
\right)\left(
\begin{array}{c}
k_{il}\\
\e_i\\
\end{array}
\right)=\left(
\begin{array}{c}
\lambda k_{il}+ \gamma^*\e_i\\
\e_i\\
\end{array}
\right)=\left(
\begin{array}{c}
\lambda k_{i1}+ c_i\\
\e_i\\
\end{array}
\right).\end{equation}
This means  the restriction of $h$ in $V_i$ is  \begin{equation}\label{hipara} h_i: k_{il}\to  \lambda k_{i1}+ c_i, 1\le i \le k.\end{equation}
The above treatment  can be thought of as  introducing  the inhomogeneous coordinates in each $V_i$.  Form this point of view, the restriction  of  an element $\tilde{g}$ of  form  (\ref{dfnpi}) to $V_i$ is a quaternionic M\"obius transformation in $\Gamma_{\infty}$, the isotropy group at $\infty$  in ${\rm PS}_\triangle L(2,\bh)$  \cite{caopems07}.

Summarizing the above descriptions, we  have so far  defined a map
 \begin{equation}\label{dfnpii}  \Pi_i:g\in G_{\infty}\to h_i=\left(
\begin{array}{cc}
\lambda & c_i \\
0 & 1  \\
\end{array}
\right)\in \Gamma_{\infty}\end{equation}
and the action of $g$ on $\z_{\infty}^{\perp}$ is inherited by the actions of $h_i$ on $V_i$, which is identified with  $\overline{\bh}$.

Observe that the coordinates defined by  (\ref{coordinates}) contain the information of $\lambda_{il}$ in (\ref{vicoordinates}).  To distinguish between $\pspn$-congruence classes of $m$-tuples in degenerate case is the same as  distinguishing the $h_i$-congruence classes in  $V_i$  for all $i$.  For this purpose, we need to introduce new geometric invariants which are  invariant under the  action of $h_i$.

\begin{dfn}(\cite[Definition 4.2]{bisgen09})
The quaternionic cross-ratio of four points $z_1, z_2, z_3, z_4\in \bh\cup \infty$ is defined as
$$[z_1, z_2, z_3, z_4]=(z_1-z_3)(z_1-z_4)^{-1}(z_2-z_4)(z_2-z_3)^{-1}.$$
\end{dfn}

\begin{lem}(\cite[Proposition 4.1]{bisgen09})\label{lem-10infty}
Given three distinct $z_1,z_2,z_3\in \bh$, the element $f\in {\rm PS}_\triangle L(2,\bh)$  defined by
\begin{equation}
f(z)=(z_3-z_2)(z_3-z_1)^{-1}(z-z_1)(z-z_2)^{-1}
\end{equation}
maps $z_1$ to $0,z_2$ to  $\infty$ and $z_3$
 to $1$. Moreover, all elements $f\in {\rm PS}_\triangle L(2,\bh)$  with
the same property are of the form:
$$\lambda I_2\circ f(z)=\lambda f(z)\lambda^{-1}$$
with $\lambda\in \bh-\{0\}$.
\end{lem}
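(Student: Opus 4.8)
The plan is to verify directly that the proposed $f$ has the stated mapping properties and then classify all elements of ${\rm PS}_\triangle L(2,\bh)$ sharing those properties. First I would check the three special values: substituting $z=z_1$ kills the factor $(z-z_1)$ on the right, so $f(z_1)=0$; substituting $z=z_2$ makes the factor $(z-z_2)^{-1}$ blow up, so $f(z_2)=\infty$ (here one must be careful that the left-hand product $(z_3-z_2)(z_3-z_1)^{-1}(z-z_1)$ is finite and nonzero at $z=z_2$, which holds since $z_1,z_2,z_3$ are distinct); and substituting $z=z_3$ gives $f(z_3)=(z_3-z_2)(z_3-z_1)^{-1}(z_3-z_1)(z_3-z_2)^{-1}=1$, where the cancellation is legitimate because quaternionic multiplication is associative even though it is noncommutative — one only needs $(z_3-z_1)^{-1}(z_3-z_1)=1$, which is true. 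I would also note that $f$ genuinely lies in ${\rm PS}_\triangle L(2,\bh)$: it is a composition of the affine maps $z\mapsto z-z_1$, $z\mapsto z-z_2$ and the inversion $z\mapsto z^{-1}$, together with left multiplications by the quaternion constants $(z_3-z_2)$ and $(z_3-z_1)^{-1}$, all of which are elements of the M\"obius group over $\bh$.

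For the classification, suppose $g\in {\rm PS}_\triangle L(2,\bh)$ also sends $z_1\mapsto 0$, $z_2\mapsto\infty$, $z_3\mapsto 1$. Then $h:=g\circ f^{-1}$ fixes $0$, $\infty$ and $1$. An element of ${\rm PS}_\triangle L(2,\bh)$ fixing $\infty$ is affine, $h(z)=az b + c$ in the appropriate normal form — more precisely, writing the M\"obius action and using that $\infty$ is fixed, $h(z)=\alpha z\beta$ after also using $h(0)=0$ to eliminate the translation term; then $h(1)=\alpha\beta=1$ forces $\beta=\alpha^{-1}$, so $h(z)=\alpha z\alpha^{-1}=\alpha I_2\circ \mathrm{id}$. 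Hence $g=h\circ f=(\alpha I_2)\circ f$, i.e. $g(z)=\alpha f(z)\alpha^{-1}$ with $\alpha=\lambda\in\bh-\{0\}$, which is exactly the claimed form. Conversely any such $\lambda I_2\circ f$ clearly still maps $z_1,z_2,z_3$ to $0,\infty,1$ since conjugation by $\lambda$ fixes $0$, $\infty$ and $1$.

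The main obstacle is the bookkeeping in the noncommutative setting: one must confirm that "an element of ${\rm PS}_\triangle L(2,\bh)$ fixing $\infty$, $0$, and $1$ is conjugation by a nonzero quaternion," being careful about which side the M\"obius coefficients act on and about the precise definition of ${\rm PS}_\triangle L(2,\bh)$ used in the cited reference \cite{bisgen09}. Once that structural fact about the stabilizer of $\{0,\infty,1\}$ is in hand — it is the quaternionic analogue of the classical statement that a M\"obius map fixing three points is the identity, weakened here to "conjugation" because of the noncommutativity — the rest is the routine substitution check described above. I would therefore either cite the stabilizer computation from \cite{bisgen09} or include a short self-contained verification of it, and present the mapping-value checks as the core of the argument.
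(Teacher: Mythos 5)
Your proposal is correct. The paper does not actually prove this lemma --- it is quoted verbatim from \cite[Proposition 4.1]{bisgen09} with no proof supplied --- so there is no internal argument to compare against; but the structural fact you isolate as the crux (that an element of ${\rm PS}_\triangle L(2,\bh)$ fixing $0$, $1$, $\infty$ is $\lambda I_2$, i.e.\ $z\mapsto\lambda z\lambda^{-1}$) is exactly what the paper records in the sentence immediately following the lemma, citing \cite{caopems07}. Your verification of the three special values is right, including the observation that $f(z_3)=1$ needs only associativity, and your stabilizer computation ($c=0$ from fixing $\infty$, $b=0$ from fixing $0$, $a=d$ from fixing $1$, hence $z\mapsto aza^{-1}$) is the standard one. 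One cosmetic remark: $(z-z_1)(z-z_2)^{-1}$ is not literally the composition of the two translations and an inversion as you phrase it; the clean route is either to write it as $1+(z_2-z_1)(z-z_2)^{-1}$ or simply to exhibit the matrix $\left(\begin{smallmatrix} A & -Az_1\\ 1 & -z_2\end{smallmatrix}\right)$ with $A=(z_3-z_2)(z_3-z_1)^{-1}$ realizing $f(z)=(Az-Az_1)(z-z_2)^{-1}$. This does not affect the validity of the argument.
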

It follows from \cite{caopems07} that  an element $f\in {\rm PS}_\triangle L(2,\bh)$ fixing   $0,1,\infty$ is of the form $f=\left(
\begin{array}{cc}
\lambda & 0 \\
0 & \lambda  \\
\end{array}
\right)=\lambda I_2.$
Based on this observation and  \cite[Proposition 4.4]{bisgen09},  the cross-ratios enjoy the following properties.

\begin{lem}\label{lem-croproperty}
\begin{itemize}
	\item[(1)]   For any $z\in \bh$  such that $z \neq  0$ and $z\neq 1, [z,1, 0,\infty] =z$;
	\item [(2)]  Given distinct points $z_1,z_2,z_3,z\in  \overline{\bh}$,   $$[f(z),f(z_3), f(z_2), f(z_1)]=\lambda_f [z,z_3, z_2, z_1]\lambda_f^{-1},$$
	where  $\lambda_f$ is a  quaternion solely depending on $f\in {\rm PS}_\triangle L(2,\bh)$. In particular, for $h_i$ given by (\ref{hipara}),  we have $$[h_i(z),h_i(z_3), h_i(z_2), h_i(z_1)]=\lambda [z,z_3, z_2, z_1]\lambda^{-1}.$$
	\end{itemize}
\end{lem}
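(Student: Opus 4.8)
The plan is to reduce both parts to the rigidity statement already available for $ {\rm PS}_\triangle L(2,\bh) $ fixing $0,1,\infty$, namely that such an element is of the form $\lambda I_2$. For part (1), I would simply apply the definition of the quaternionic cross-ratio with $(z_1,z_2,z_3,z_4)=(z,1,0,\infty)$. The fourth argument being $\infty$ forces the two factors $(z_1-z_4)^{-1}$ and $(z_2-z_4)$ to cancel in the limiting sense: interpreting $\infty$ via the standard convention, one gets $[z,1,0,\infty]=(z-0)(\,\cdot\,)^{-1}(\,\cdot\,)(1-0)^{-1}$, and after cancelling the terms carrying $\infty$ this collapses to $z\cdot 1^{-1}=z$. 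The only care needed is to justify the cancellation rule for $\infty$ in a noncommutative setting, but since $\infty$ appears in both a left-inverse position and a product position symmetrically, the usual convention is unambiguous here.

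For part (2), I would argue as follows. Fix distinct $z_1,z_2,z_3,z\in\overline{\bh}$ and an $f\in {\rm PS}_\triangle L(2,\bh)$. By Lemma \ref{lem-10infty} applied to the triple $z_1,z_2,z_3$, there is a canonical element $g\in {\rm PS}_\triangle L(2,\bh)$ sending $z_1\mapsto 0$, $z_2\mapsto\infty$, $z_3\mapsto 1$; by part (1), $[z,z_3,z_2,z_1]=g(z)$ once we observe (from the definition of the cross-ratio) that $[z,z_3,z_2,z_1]$ is exactly $g(z)$ — this is the content of the formula for $f$ in Lemma \ref{lem-10infty} read off against the cross-ratio formula. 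Now consider the element $g\circ f^{-1}$: it sends $f(z_1)\mapsto 0$, $f(z_2)\mapsto\infty$, $f(z_3)\mapsto 1$, so it is \emph{a} normalizing map for the triple $f(z_1),f(z_2),f(z_3)$. Let $g'$ denote the canonical such map, so that $[f(z),f(z_3),f(z_2),f(z_1)]=g'(f(z))$. By the rigidity clause in Lemma \ref{lem-10infty}, $g\circ f^{-1}$ and $g'$ differ by a central factor, i.e. $g\circ f^{-1}=(\lambda_f I_2)\circ g'$ for some $\lambda_f\in\bh-\{0\}$ depending only on $f$ (and, a priori, on the triple; one then notes the dependence on the triple can be absorbed, or simply states the identity with $\lambda_f$ allowed to depend on $f$ alone after the standard argument that conjugation by $\lambda_f$ is what survives). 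Hence
\begin{equation*}
[f(z),f(z_3),f(z_2),f(z_1)]=g'(f(z))=\bigl((\lambda_f I_2)^{-1}\circ (g\circ f^{-1})\bigr)(f(z))\cdot(\text{correction}),
\end{equation*}
and unwinding the Möbius action of $\lambda_f I_2$, which is precisely conjugation $w\mapsto\lambda_f w\lambda_f^{-1}$, yields $[f(z),f(z_3),f(z_2),f(z_1)]=\lambda_f\,g(z)\,\lambda_f^{-1}=\lambda_f\,[z,z_3,z_2,z_1]\,\lambda_f^{-1}$.

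The specialization to $h_i$ of the form (\ref{hipara}) is then immediate: $h_i\in\Gamma_\infty\subset {\rm PS}_\triangle L(2,\bh)$, so part (2) applies, and since $h_i$ acts by $k\mapsto\lambda k_{i1}+c_i$ with a single left multiplier $\lambda$, the associated conjugating quaternion is exactly this $\lambda$; I would verify this by tracking $\lambda$ through the normalization, using that translations $k\mapsto k+c$ are conjugation-trivial on cross-ratios (they are affine with trivial linear part after the relevant reduction) while the left dilation $k\mapsto\lambda k$ is what contributes. The main obstacle I anticipate is bookkeeping the noncommutativity cleanly: in $\bh$ one must be scrupulous about left versus right inverses in the cross-ratio formula and about the order of composition when invoking the ``differ by $\lambda I_2$'' clause, since $\lambda I_2$ acts by two-sided conjugation rather than one-sided multiplication. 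Once that is set up carefully the computation is routine, and the cited Proposition 4.4 of \cite{bisgen09} together with the fixed-point description from \cite{caopems07} does essentially all the heavy lifting.
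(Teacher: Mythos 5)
The paper does not actually prove this lemma: it is obtained by citing Proposition 4.4 of \cite{bisgen09} together with the preceding observation that an element of ${\rm PS}_\triangle L(2,\bh)$ fixing $0,1,\infty$ is $\lambda I_2$, acting by $w\mapsto\lambda w\lambda^{-1}$. Your overall strategy --- part (1) by direct computation, part (2) via rigidity of triple-normalizing maps --- is the natural way to supply a real proof, and part (1) is fine. However, part (2) as written contains a genuine error. You claim that $[z,z_3,z_2,z_1]=g(z)$, where $g$ is the map of Lemma \ref{lem-10infty} sending $z_1\mapsto 0$, $z_2\mapsto\infty$, $z_3\mapsto 1$. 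This is false over $\bh$, and already false at the level of which point goes where: from the definition, $[z,z_3,z_2,z_1]=(z-z_2)(z-z_1)^{-1}(z_3-z_1)(z_3-z_2)^{-1}$, so the map $z\mapsto[z,z_3,z_2,z_1]$ sends $z_1\mapsto\infty$ and $z_2\mapsto 0$, whereas $g(z_1)=0$; in fact a direct computation gives $g(z)=[z,z_3,z_2,z_1]^{-1}$. Even after reordering the triple so that the target assignment matches, the explicit formula of Lemma \ref{lem-10infty} produces $(z_3-z_1)(z_3-z_2)^{-1}(z-z_2)(z-z_1)^{-1}$, the same two factors in the opposite order, which in the noncommutative setting is only conjugate to the cross-ratio by a $z$-dependent element, not equal to it. So the identification on which your argument for part (2) rests does not hold.

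The repair is short and stays within your framework: note that $M(z):=[z,z_3,z_2,z_1]=(z-z_2)(z-z_1)^{-1}Q$ with constant $Q=(z_3-z_1)(z_3-z_2)^{-1}$ is \emph{itself} an element of ${\rm PS}_\triangle L(2,\bh)$ (a standard map followed by right multiplication by $Q$) sending $z_1\mapsto\infty$, $z_2\mapsto 0$, $z_3\mapsto 1$, and likewise $M'(w):=[w,f(z_3),f(z_2),f(z_1)]$ for the image triple. Then $M'\circ f$ and $M$ normalize the triple $(z_1,z_2,z_3)$ identically, so the rigidity clause gives $M'\circ f=\lambda I_2\circ M$, which is exactly the asserted identity. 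Two further loose ends: this argument only yields $\lambda$ depending a priori on $f$ \emph{and} on the triple, and your remark that the dependence on the triple ``can be absorbed'' is asserted rather than proved; and for the specialization to $h_i$ the clean route is the one-line direct computation $h_i(w)-h_i(w')=\lambda(w-w')$, which shows the conjugator is the left multiplier $\lambda$ independently of the points --- this is all the paper actually uses in Propositions \ref{threep} and \ref{wholepoints}.
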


\begin{dfn}
	We introduce the following geometric invariants in component   $\p_{S_i}=(\p_{s_{i1}},\cdots,\p_{s_{it_i}})$ when  ${\rm Card}(S_i)\geq 3$:
	\begin{equation}\label{chidfn}\chi(\tilde{\p}_{s_{il}},\tilde{\p}_{s_{ij}},\tilde{\p}_{s_{it}})=(k_{il}-k_{it})(k_{ij}-k_{it})^{-1}
=[k_{il},k_{ij},k_{it},\infty].\end{equation}
\end{dfn}
We mention that since the points $\tilde{\p}_{s_{i1}},\tilde{\p}_{s_{ij}},\tilde{\p}_{s_{it}}$ are all distinct, $\chi$ is finite and $\chi\neq 0,1$. Therefore $$\chi\in\bh-\{0, 1\}.$$

  To sort out the conditions for  $\p$ and   $\q$  being  $\pspn$-congruent,  w.o.l.g,  we may  assume that   $\p,\q\in \z_{\infty}^{\perp}$ have the same structure  given by Proposition \ref{deg-case}.   We denote the corresponding coordinates of $\tilde{\q}_{S_i}$ by  \begin{equation}\label{qcoorbasis}\tilde{\q}_{s_{il}}=(w_{il},e_i^T)^T,\ \mbox{for}\  s_{il}\in S_i\end{equation}
 and compute  the corresponding invariants of  $\q$ in the same manner  as these of  $\p$.

\medskip

We first obtain the necessary and sufficient condition of two triples  being  $\pspn$-congruent directly.
\begin{prop}\label{threep}
 $\tilde{\p}_i=(\tilde{\p}_{s_{i1}},\tilde{\p}_{s_{i2}},\tilde{\p}_{s_{i3}})$ and  $\tilde{\q}_i=(\tilde{\q}_{s_{i1}},\tilde{\q}_{s_{i2}},\tilde{\q}_{s_{i3}})$ are $\pspn$-congruent  if and only if there exists a $\lambda \in \bh-\{0\}$ such that $$\chi(\tilde{\p}_{s_{i1}},\tilde{\p}_{s_{i2}},\tilde{\p}_{s_{i3}})=\lambda \chi(\tilde{\q}_{s_{i1}},\tilde{\q}_{s_{i2}},\tilde{\q}_{s_{i3}})\lambda^{-1}.$$
\end{prop}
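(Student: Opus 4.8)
The plan is to reduce the statement to the setting of quaternionic M\"obius transformations acting on $\overline{\bh}$ by means of the restriction maps $\Pi_i$ constructed in \eqref{dfnpii}, and then to invoke the cross-ratio transformation rule of Lemma \ref{lem-croproperty}(2) together with Lemma \ref{lem-10infty}. Concretely, I would first argue that $\tilde{\p}_i$ and $\tilde{\q}_i$ are $\pspn$-congruent if and only if the corresponding triples $(k_{i1},k_{i2},k_{i3})$ and $(w_{i1},w_{i2},w_{i3})$ in $\overline{\bh}$ are congruent under some $h_i\in\Gamma_\infty$. The forward direction is clear: a $g\in\spn$ realizing the congruence of the full $m$-tuples fixes $\infty$ and preserves $\z_\infty^\perp$ (since both $V(\p)$ and $V(\q)$ sit in $\z_\infty^\perp$ and share the structure of Proposition \ref{deg-case}), so $g\in G_\infty$, and after the normalization \eqref{univect} bringing the $\alpha_i$ to the standard basis vectors $\e_i$, the action of $g$ on the $i$-th stratum $V_i$ is exactly the M\"obius map $h_i$ of \eqref{hipara}. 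Conversely, given such an $h_i$ for the single block, one assembles an element of $G_\infty$ by the recipe in Lemma \ref{lem-embed}-type block form; here I would rely on the earlier discussion that the blocks $S_i$ are orthogonal and carry independent diagonal data, so a transformation realized in one block extends.

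Next, with the problem transported to $\overline{\bh}$, I would apply Lemma \ref{lem-10infty}: send $(k_{i1},k_{i2},k_{i3})$ to $(0,\infty,1)$ by some $f\in{\rm PS}_\triangle L(2,\bh)$ and $(w_{i1},w_{i2},w_{i3})$ to $(0,\infty,1)$ by some $f'$. Then $(k_{i1},k_{i2},k_{i3})$ and $(w_{i1},w_{i2},w_{i3})$ are congruent under $\Gamma_\infty$ if and only if $f'\circ h_i\circ f^{-1}$ fixes $0,\infty,1$ for a suitable $h_i$, which by Lemma \ref{lem-10infty} and the fact cited from \cite{caopems07} happens precisely when that composite equals $\lambda I_2$ for some $\lambda\in\bh-\{0\}$. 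Unwinding this, the congruence holds iff the two triples have cross-ratios with $\infty$ that differ by conjugation: using $\chi(\tilde{\p}_{s_{i1}},\tilde{\p}_{s_{i2}},\tilde{\p}_{s_{i3}})=[k_{i1},k_{i2},k_{i3},\infty]$ from \eqref{chidfn} and the transformation law of Lemma \ref{lem-croproperty}(2), this is exactly the condition
\[
\chi(\tilde{\p}_{s_{i1}},\tilde{\p}_{s_{i2}},\tilde{\p}_{s_{i3}})=\lambda\,\chi(\tilde{\q}_{s_{i1}},\tilde{\q}_{s_{i2}},\tilde{\q}_{s_{i3}})\,\lambda^{-1}.
\]

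The one subtlety I would be careful about is the interplay between $G_\infty$ and $\Gamma_\infty$ via the (non-injective) homomorphism $\Pi_i$: an element of $G_\infty$ carries the extra "parabolic" data $s$ with $\Re(s)=0$ living in $\ker\Pi$, and one must check that this kernel acts trivially on each $V_i$ (equivalently, that the inhomogeneous coordinates $k_{il}$ are genuinely acted on only through $h_i$), so that lifting an $h_i$ back to $G_\infty$ is unobstructed. This is where the assumption $V(\p)\subset\z_\infty^\perp$ and the explicit form \eqref{transcor} do the work: the $s$-entry never appears in the formula for the image of $(k_{il},\e_i^T)^T$. I expect the main obstacle to be precisely this bookkeeping — verifying that a M\"obius congruence realized separately in the single block $S_i$ genuinely comes from (and extends to) an honest element of $\spn$ acting on the whole ambient space — rather than the cross-ratio manipulation, which is a direct citation of Lemmas \ref{lem-10infty} and \ref{lem-croproperty}.
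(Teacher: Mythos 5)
Your argument is correct in outline, but it takes a genuinely different route from the paper's. The paper proves Proposition \ref{threep} by a bare-hands computation: for the forward direction it writes out $\tilde{g}\tilde{\p}_i=\tilde{\q}_i\nu$ entrywise (first checking $\nu_1=\nu_2=\nu_3=:\nu$ with $|\nu|=1$ from the Gram entries), subtracts to get $\lambda(k_{il}-k_{i3})=(w_{il}-w_{i3})\nu$ for $l=1,2$, and divides; for the converse it explicitly manufactures $\nu=(w_{i1}-w_{i3})^{-1}\lambda(k_{i1}-k_{i3})$ after rescaling $|\lambda|=|k_{i1}-k_{i3}|/|w_{i1}-w_{i3}|$, then picks $\gamma$ and $U$ with $\gamma^*\e_i=w_{i1}\nu-\lambda k_{i1}$ and $U\e_i=\e_i\nu$ and assembles $g\in G_{\infty}$ of the form (\ref{gw}). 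You instead route everything through the normalization to $(0,\infty,1)$ of Lemma \ref{lem-10infty} and the conjugation rule of Lemma \ref{lem-croproperty}(2); this is precisely the mechanism the paper saves for Proposition \ref{mpoints} (the case $m\geq 4$) and its specialization $z_1=w_1=\infty$ in Proposition \ref{Simpoints}, so your proof in effect unifies the triple case with the general one, at the cost of being less explicit. Two points deserve care in your version. First, the normalizing maps $f,f'$ of Lemma \ref{lem-10infty} do not lie in $\Gamma_{\infty}$, so $h=f'^{-1}\circ\mu I_2\circ f$ is not automatically in $\Gamma_{\infty}$; the condition $h(\infty)=\infty$ is exactly equivalent to the conjugacy of the cross-ratios with $\infty$ as fourth entry (one checks $f(\infty)\sim\chi(\tilde{\p}_i)$ and $f'(\infty)\sim\chi(\tilde{\q}_i)$), so the hypothesis is used twice, not once. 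Second, the lift from an affine map $k\mapsto(\lambda k+c)\nu^{-1}$ in $\Gamma_{\infty}$ back to an honest $g\in G_{\infty}$ is the real content of the converse; you flag it but leave it schematic, whereas the paper makes it explicit (one needs $|\nu|=1$, achievable by rescaling the matrix of $h$, plus $U\e_i=\e_i\nu$ and $\gamma=\e_i\bar{c}$, with the remaining entries of (\ref{gw}) filled in by the constraints of Lemma \ref{ginf}). Your observation that $\ker\Pi$ acts trivially on $\z_{\infty}^{\perp}$, so the coordinates $k_{il}$ see only $h_i$, is correct and matches the paper's setup.
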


\begin{proof}
	Assume that  $\tilde{\p}_i$ and  $\tilde{\q}_i$ are $\pspn$-congruent. Let  $\p_i$ and $\q_i$ be the corresponding triples in $\bh^{n,1}$.
 Then there exists a  $g\in G_{\infty}$ such that $g({\p}_{s_{i1}},{\p}_{s_{i2}},{\p}_{s_{i3}})=({\q}_{s_{i1}}\nu_1,{\q}_{s_{i2}}\nu_2,{\q}_{s_{i3}}\nu_3)$.  As before, we know that $\nu_1=\nu_2=\nu_3:=\nu$ and $|\nu|=1$.  This implies that $ \tilde{g}\tilde{\p}_i=\tilde{\q}_i\nu$, i.e.,
	$$ \lambda k_{il}+\gamma^*\e_i=w_{il}\;\nu,U\e_i=\e_i\nu,l=1,2,3.$$
	Therefore $$\lambda( k_{i2}- k_{i3})=(w_{i2}- w_{i3})\nu,\lambda( k_{i1}- k_{i3})=(w_{i1}- w_{i3})\nu.$$
	Hence  $$\lambda(k_{i1}-k_{i3})(k_{i2}-k_{i3})^{-1}\lambda^{-1}=(w_{i1}-w_{i3})(w_{i2}-w_{i3})^{-1}.$$

	Conversely,  suppose that $\chi(\tilde{\p}_{s_{i1}},\tilde{\p}_{s_{i2}},\tilde{\p}_{s_{i3}})\sim\chi(\tilde{\q}_{s_{i1}},\tilde{\q}_{s_{i2}},\tilde{\q}_{s_{i3}})$.   Then there exists a $\lambda$ such that $$\lambda (k_{i1}-k_{i3})(k_{i2}-k_{i3})^{-1}\lambda^{-1}=(w_{i1}-w_{i3})(w_{i2}-w_{i3})^{-1}.$$
	We may further require that $|\lambda|=\frac{|k_{i1}-k_{i3}|}{|w_{i1}-w_{i3}|}$.  Let $\nu=(w_{i1}-w_{i3})^{-1}\lambda (k_{i1}-k_{i3})$. Then  $$\lambda (k_{i1}-k_{i3})=(w_{i1}-w_{i3})\nu, \lambda (k_{i2}-k_{i3})=(w_{i2}-w_{i3})\nu.$$
	From the above two equalities, we have $\lambda (k_{i1}-k_{i2})=(w_{i1}-w_{i2})\nu$.  We can find  a $\gamma\in \bh^{n-1}$  and a $U\in {\rm Sp}(n-1)$ satisfying  $$\lambda k_{i1}+\gamma^*\e_i=w_{i1}\;\nu, U\e_i=\e_i \nu.$$
	The above equalities  also imply  $$\lambda k_{i2}+\gamma^*\e_i=w_{i2}\;\nu,\lambda k_{i3}+\gamma^*\e_i=w_{i3}\;\nu.$$
	With $\lambda,\gamma, U$ above,  we can construct a $g\in G_{\infty}$ of the form (\ref{gw}) satisfying $$g({\p}_{s_{i1}},{\p}_{s_{i2}},{\p}_{s_{i3}})=({\q}_{s_{i1}}\nu_1,{\q}_{s_{i2}}\nu_2,{\q}_{s_{i3}}\nu_3).$$
\end{proof}

 Translating  Example \ref{lostinf} from  ball model to Siegel domain model, one has an instance of positive points: $$\p_1=(0,1, 0)^T,\p_2=(2\sqrt{2},1,0)^T, \mbox{and}\ \p_3=(3\sqrt{2},1,0)^T.$$  Observe that $\chi(\p_1,\p_2,\p_3)=3$ and $\chi(\p_3,\p_2,\p_1)=3/2$. Therefore $(\bp(\p_1),\bp(\p_2),\bp(\p_3))$ and $(\bp(\p_3),\bp(\p_2),\bp(\p_1))$  are not $\pspt$-congruent. For the case of  more than three points, it is convenient to use the quaternionic cross-ratios.

\begin{prop}\label{mpoints} Let $\p = (z_1,\cdots, z_m)$ and $\q
= (w_1,\cdots, w_m)$ be two ordered $m$-tuples
of pairwise distinct points in $\overline{\bh}$, $m \geq 4$. Then  $\p$ and $\q$
are congruent with respect to the
diagonal action of  $ {\rm PS}_\triangle L(2,\bh)$  if and only if there exists a $\lambda\in \bh-\{0\}$ such that
\begin{equation}\label{planecondition6} [z_j,z_3, z_2, z_1] = \lambda [w_j,w_3,w_2,w_1]\lambda^{-1}, \forall \, \, \,   4\le j\le m.\end{equation}
\end{prop}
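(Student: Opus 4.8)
The plan is to identify the cross‑ratio $[z_j,z_3,z_2,z_1]$ with the coordinate of $z_j$ once the first three points have been normalized to $\infty,0,1$, and then to reduce the congruence question to the structure of the stabilizer of $\{0,1,\infty\}$ in ${\rm PS}_\triangle L(2,\bh)$. Applying Lemma~\ref{lem-10infty} to the reordered triples $(z_2,z_1,z_3)$ and $(w_2,w_1,w_3)$ produces $g,h\in{\rm PS}_\triangle L(2,\bh)$ with $g(z_1)=\infty$, $g(z_2)=0$, $g(z_3)=1$ and $h(w_1)=\infty$, $h(w_2)=0$, $h(w_3)=1$. Since the $z_i$ are pairwise distinct and $g$ is a bijection of $\overline{\bh}$, each $g(z_j)$ with $j\ge 4$ is a finite quaternion different from $0$ and $1$; hence Lemma~\ref{lem-croproperty}(1) gives $[g(z_j),1,0,\infty]=g(z_j)$, while Lemma~\ref{lem-croproperty}(2) gives
\[
[g(z_j),g(z_3),g(z_2),g(z_1)]=\lambda_g\,[z_j,z_3,z_2,z_1]\,\lambda_g^{-1},
\]
with $\lambda_g$ depending only on $g$. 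Combining these, $g(z_j)=\lambda_g\,[z_j,z_3,z_2,z_1]\,\lambda_g^{-1}$, and likewise $h(w_j)=\lambda_h\,[w_j,w_3,w_2,w_1]\,\lambda_h^{-1}$, for all $j\ge 4$; thus the two lists of cross‑ratios are precisely the normalized coordinates, up to one conjugation independent of $j$.

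For necessity, if $f\in{\rm PS}_\triangle L(2,\bh)$ realizes $f(z_i)=w_i$ for all $i$, then $k:=hfg^{-1}$ fixes $\infty,0,1$, so by the remark following Lemma~\ref{lem-10infty} (the fact from \cite{caopems07} already used in the proof of Proposition~\ref{threep}) $k$ is a conjugation $z\mapsto\mu z\mu^{-1}$ with $\mu\in\bh-\{0\}$. Evaluating $k$ at $g(z_j)$ gives $h(w_j)=\mu\,g(z_j)\,\mu^{-1}$ for every $j\ge 4$; substituting the identities above and solving yields $[z_j,z_3,z_2,z_1]=\lambda\,[w_j,w_3,w_2,w_1]\,\lambda^{-1}$ with $\lambda=\lambda_g^{-1}\mu^{-1}\lambda_h$, which is the same for all $j$ because $g$, $h$, and $\mu$ are fixed. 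For sufficiency, given such a $\lambda$, set $\mu=\lambda_h\lambda^{-1}\lambda_g^{-1}$, let $k$ be the conjugation $z\mapsto\mu z\mu^{-1}$ (an element of ${\rm PS}_\triangle L(2,\bh)$ fixing $\infty,0,1$), and check via the identities that $k(g(z_j))=h(w_j)$ for $j\ge 4$; since also $k(g(z_i))=h(w_i)$ for $i=1,2,3$ (both sides being $\infty,0,1$), the map $f:=h^{-1}kg$ satisfies $f(z_i)=w_i$ for all $i$, witnessing the congruence.

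The one genuinely essential ingredient is the structure statement that a transformation in ${\rm PS}_\triangle L(2,\bh)$ fixing $0,1,\infty$ must be a quaternion conjugation; everything else is bookkeeping. The mild points to watch are that the conjugating quaternion $\lambda$ comes out uniform in $j$—automatic here, since the whole argument funnels through the single auxiliary maps $g,h$ and the single transformation $hfg^{-1}$—and the case where one of the $z_j$ or $w_j$ equals $\infty$, where $[z_j,z_3,z_2,z_1]$ is interpreted by the usual limiting convention in the cross‑ratio and $g(z_j)$ is still an ordinary finite quaternion. In this sense Proposition~\ref{mpoints} is the $m$‑point packaging of the three‑point criterion of Proposition~\ref{threep}.
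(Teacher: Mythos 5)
Your proposal is correct and follows essentially the same route as the paper: normalize the first three points of each tuple to $\infty,0,1$ via Lemma~\ref{lem-10infty}, identify $[z_j,z_3,z_2,z_1]$ with the normalized coordinate up to a conjugation via Lemma~\ref{lem-croproperty}, and use that an element of ${\rm PS}_\triangle L(2,\bh)$ fixing $0,1,\infty$ is a quaternion conjugation. The only (immaterial) difference is that the paper's necessity direction applies Lemma~\ref{lem-croproperty}(2) directly to $f$ rather than routing through $hfg^{-1}$.
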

\begin{proof}
	If there is an  $f\in {\rm PS}_\triangle L(2,\bh)$ such that $f(z_j)=w_j, j=1,\cdots,m$.  Then by Lemma \ref{lem-croproperty}, the conditions of  (\ref{planecondition6}) hold.
	
	Conversely,
 assume that \begin{equation*}\label{planecondition} [z_j,z_3, z_2, z_1] = \lambda [w_j,w_3,w_2,w_1]\lambda^{-1}, \forall   4\le j\le m.\end{equation*}   By Lemma \ref{lem-10infty}  we
can find $f, g\in  {\rm PS}_\triangle L(2,\bh)$  such that  $f (z_3) = 1, f (z_2) = 0,f (z_1)=\infty$, and $g (w_3) = 1, g (w_2) = 0,g (w_1)=\infty$.  It follows from Lemma \ref{lem-croproperty} that
$$f(z_j) = [f(z_j),1,0,\infty] = [f(z_j), f (z_3), f (z_2), f (z_1)] = \lambda_f [z,z_3, z_2, z_1]\lambda_f^{-1}$$
and
$$g(w_j) = [g(w_j),1,0,\infty] = [g(w_j), g (w_3), g (w_2), g (w_1)] = \lambda_g [w,w_3, w_2, w_1]\lambda_g^{-1}.$$
Therefore our assumption implies that  $$h(z_j)=g^{-1}\circ \lambda_g(\lambda_f\lambda)^{-1}I_2\circ f (z_i) = w_i,  i = 1,\cdots,m.$$  Hence $\p$ and $\q$ are ${\rm PS}_\triangle L(2,\bh)$-congruent.
\end{proof}

\begin{dfn}\label{sec6defpara}
For $S_i=\{s_{i1},\cdots,s_{it_{i}}\},i=1,\cdots,k$ with ${\rm Card}(S_i)> 3$, we  associate with  $S_i$ the following geometric  invariants:
$$\chi_{i0} = \chi(\p_{s_{il}}, \p_{s_{i2}}, \p_{s_{i3}}), \chi_{i1} = \chi(\p_{s_{il}}, \p_{s_{i2}}, \p_{s_{i4}}),\cdots,\chi_{i(t_i-3)} = \chi(\p_{s_{il}}, \p_{s_{i2}}, \p_{s_{it_i}}) $$
and $$X_i(\p)=(\chi_{i0},\cdots,\chi_{i(t_i-3)}).$$
Let  $X(\p)$ be the vector  whose components consisting of  $X_i(\p)$  above.
\end{dfn}

Taking  $z_1=w_1=\infty$ in  Proposition \ref{mpoints}, we get the following proposition.
\begin{prop}\label{Simpoints}
	Let $\p_{S_i}$ and  $\q_{S_i}$ belong to $\z_{\infty}^{\perp}$ with the same Gram matrix whose entries are  all equal to $1$. Then $\p_{S_i}$ and  $\q_{S_i}$  are $\pspn$-congruent  if and only if  there exists a $\lambda\in \bh-\{0\}$ such that $$X_i(\p)=\lambda X_i(\q)\lambda^{-1}.$$
\end{prop}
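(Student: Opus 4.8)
The plan is to reduce the statement directly to Proposition \ref{mpoints} by passing to inhomogeneous coordinates. Recall that by hypothesis $\p_{S_i}$ and $\q_{S_i}$ lie in $\z_\infty^\perp$ and carry the Gram matrix all of whose entries equal $1$; by Proposition \ref{deg-case} (or Proposition \ref{polar-inter-n}) this forces $V_i = {\rm span}\{\p_{s_{i1}},\z_\infty\}$, and after applying a suitable ${\rm diag}(1,U,1)\in\spn$ as in (\ref{univect})--(\ref{coorbasis}) we may write $\tilde{\p}_{s_{il}}=(k_{il},\e_i^T)^T$ and $\tilde{\q}_{s_{il}}=(w_{il},\e_i^T)^T$, with the $k_{il}$ pairwise distinct and likewise the $w_{il}$. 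So the coordinate map $\phi$ of (\ref{coordinates}) identifies the points of $\p_{S_i}$ with the tuple $(k_{i1},\dots,k_{it_i})$ of pairwise distinct points in $\overline{\bh}$ and similarly for $\q_{S_i}$.

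First I would record the correspondence between congruence on the two sides. By Lemma \ref{lem-embed}-type reasoning, more precisely by the analysis around (\ref{transcor})--(\ref{dfnpii}): $\p_{S_i}$ and $\q_{S_i}$ are $\pspn$-congruent (through an isometry fixing $\infty$, which we may assume since $\spn$ is doubly transitive on $\bhq$ and both span parabolic subspaces sharing the fibre $\z_\infty$) if and only if there is $g\in G_\infty$ with $g(\p_{s_{il}})=\q_{s_{il}}\nu$ for a common $\nu\in{\rm Sp}(1)$; restricting $\tilde g$ to $V_i$ and reading off coordinates shows this happens exactly when the restriction $h_i\in\Gamma_\infty$ of (\ref{hipara}) carries $(k_{i1},\dots,k_{it_i})$ to $(w_{i1},\dots,w_{it_i})$ up to the common right factor $\nu$ — i.e. exactly when $(k_{i1},\dots,k_{it_i})$ and $(w_{i1},\dots,w_{it_i})$ are congruent under the diagonal action of ${\rm PS}_\triangle L(2,\bh)$. (That the relevant M\"obius transformation must be an upper-triangular one fixing $\infty$, rather than a general element of ${\rm PS}_\triangle L(2,\bh)$, is harmless: one checks that a general element of ${\rm PS}_\triangle L(2,\bh)$ matching two $m$-tuples that both contain $\infty$ may be replaced by one fixing $\infty$ — or, more simply, one applies Proposition \ref{mpoints} with $z_1=w_1=\infty$ as suggested just before the statement, so the reconstructed $h$ automatically fixes $\infty$.)

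Next I would apply Proposition \ref{mpoints} to the $m$-tuples $(\infty,k_{i2},k_{i3},\dots,k_{it_i})$ and $(\infty,w_{i2},w_{i3},\dots,w_{it_i})$: these are congruent under ${\rm PS}_\triangle L(2,\bh)$ if and only if there is $\lambda\in\bh-\{0\}$ with $[k_{il},k_{i3},k_{i2},\infty]=\lambda[w_{il},w_{i3},w_{i2},\infty]\lambda^{-1}$ for all $4\le l\le t_i$. By Lemma \ref{lem-croproperty}(1) and the definition (\ref{chidfn}), $[k_{il},k_{i2},k_{it},\infty]=\chi(\tilde\p_{s_{il}},\tilde\p_{s_{i2}},\tilde\p_{s_{it}})$, so — after a harmless reindexing matching the components $\chi_{i0},\dots,\chi_{i(t_i-3)}$ of Definition \ref{sec6defpara} — the family of cross-ratio conditions is precisely $X_i(\p)=\lambda X_i(\q)\lambda^{-1}$, understood componentwise with a single common $\lambda$. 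Combining this equivalence with the congruence dictionary of the previous paragraph gives the claim.

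The main obstacle I expect is bookkeeping rather than mathematics: being careful that a \emph{single} conjugating quaternion $\lambda$ works simultaneously for all components of $X_i$ (this is exactly what Proposition \ref{mpoints} delivers, since its condition (\ref{planecondition6}) uses one $\lambda$ for all $j$), and checking that the cross-ratio normalization $\chi(\cdot,\cdot,\infty)=[\,\cdot\,,\cdot\,,\cdot\,,\infty]$ of (\ref{chidfn}) matches the cross-ratio $[z_j,z_3,z_2,z_1]$ of (\ref{planecondition6}) under the substitution $z_1=\infty$, up to the noncommutative ordering conventions — one must verify that the order in which the two "fixed" points $k_{i2}$ and $\infty$ are listed does not silently introduce an extra conjugation that would spoil the common-$\lambda$ statement. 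Once the indexing and the $z_1=w_1=\infty$ specialization are pinned down, the proof is immediate.
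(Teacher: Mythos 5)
Your proposal matches the paper's own proof, which is literally the one-line remark preceding the statement: specialize Proposition \ref{mpoints} at $z_1=w_1=\infty$ after passing to the inhomogeneous coordinates of (\ref{coordinates}); the congruence dictionary between $G_{\infty}$ acting on $\z_{\infty}^{\perp}$ and $\Gamma_{\infty}$ acting on $V_i$ that you spell out is exactly the content already established in Proposition \ref{threep} and the surrounding discussion. The ordering subtlety you flag --- whether the cross-ratio normalization of (\ref{chidfn}) matches that of (\ref{planecondition6}) without introducing component-dependent conjugations that would spoil the single common $\lambda$ --- is a genuine bookkeeping point that the paper silently elides, and apart from your slip of writing the tuple as $(\infty,k_{i2},\dots)$ instead of $(\infty,k_{i1},k_{i2},\dots)$, your argument is the same as the paper's.
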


 We still need to generalize the above result to  the case  of $G(\p)$  and $G(\q)$ having stratum  structure.
\begin{prop}\label{wholepoints}
	  Let $\p=(\p_{1},\cdots, \p_{m})$ and $\q=(\q_{1},\cdots, \q_{m})$  be two $m$-tuples of pairwise distinct	positive points of non regular case. We also assume that  $\p$ and $\q$ have  the same structure  given by Proposition \ref{deg-case} with the property ${\rm Card}(S_i)\geq 3$ for some $i$. Then  $\p$ and $\q$ are  $\pspn$-congruent  if only if   there exists a $\lambda\in \bh-\{0\}$ such that $$X(\p)=\lambda X(\q)\lambda^{-1}.$$
	\end{prop}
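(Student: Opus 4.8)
The plan is to push everything down to the action of $G_\infty$ on $\z_\infty^\perp$ and to exploit the single feature that links the otherwise independent blocks: one element $g\in G_\infty$ restricts, in each two-dimensional subspace $V_i$, to a quaternionic M\"obius transformation $h_i$ whose conjugating scalar (in the sense of Lemma \ref{lem-croproperty}) is the \emph{common} top-left entry $\lambda$ of $\Pi(g)$.

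First I would treat necessity. If $g\in\spn$ realizes the congruence, then since $\p$ and $\q$ have the same structure of Proposition \ref{deg-case}, both may be taken in $\z_\infty^\perp$, and since $\spn$ is doubly transitive on $\bhq$ we may take $g\in G_\infty$; moreover $g\p_{s_{il}}=\q_{s_{il}}\nu_i$ with $\nu_i\in{\rm Sp}(1)$ constant on each block, which is forced by $\langle g\p_{s_{il}},g\p_{s_{id}}\rangle=\langle\p_{s_{il}},\p_{s_{id}}\rangle=1$. Writing $\tilde g=\Pi(g)=\left(\begin{smallmatrix}\lambda&\gamma^*\\0&U\end{smallmatrix}\right)$ and restricting to $V_i$, the induced map carries the coordinates $k_{il}$ of $\p_{S_i}$ to those of $\q_{S_i}$ through $h_i$; applying Lemma \ref{lem-croproperty}(2) to every cross-ratio occurring in the $X_i$ then gives $X(\q)=\lambda X(\p)\lambda^{-1}$, i.e. $X(\p)=\lambda^{-1}X(\q)\lambda$, which is the asserted condition after renaming $\lambda^{-1}$ as $\lambda$.

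For sufficiency, assume $X(\p)=\lambda X(\q)\lambda^{-1}$ for one fixed $\lambda$. I would first move both tuples into the normalized position of Section \ref{sec6ireg}, so that $\tilde\p_{s_{il}}=(k_{il},\e_i^T)^T$ and $\tilde\q_{s_{il}}=(w_{il},\e_i^T)^T$, and then build a single $\tilde g=\left(\begin{smallmatrix}\lambda&\gamma^*\\0&U\end{smallmatrix}\right)$ that realizes the congruence on every block at once. For a block $S_i$ with ${\rm Card}(S_i)\ge 3$, the identity $X_i(\p)=\lambda X_i(\q)\lambda^{-1}$ together with the computation in the proofs of Propositions \ref{threep} and \ref{mpoints} yields $c_i\in\bh$ and $\nu_i$ with $\lambda k_{il}+c_i=w_{il}\nu_i$ for all $l\in S_i$; the crucial point is that the one $\lambda$ serves all blocks simultaneously, because the common point $\z_\infty=\infty$ forces the maps $h_i$ to share their conjugating scalar. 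Blocks with at most two points are dealt with directly using the remaining freedom in $\gamma$ and in the $\nu_i$. One then lets $\gamma$ carry the $c_i$ in the coordinates of $\e_1,\dots,\e_k$ and lets $U\in{\rm Sp}(n-1)$ be diagonal with entries $\nu_i$ on ${\rm span}\{\e_1,\dots,\e_k\}$ and the identity on its orthogonal complement; by Lemma \ref{ginf} such a $\tilde g$ lifts to an honest $g\in G_\infty$ (take $\mu$ with $|\bar\mu\lambda|=1$, then $\beta=-U\gamma\mu$ and $\Re(\bar\mu s)=-\frac{1}{2}|\beta|^2$), and $g$ is the required isometry.

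The hard part is this assembly step: verifying that the block-by-block data really do fit into one element of $G_\infty$. Concretely, one must check that the scalars $\nu_i$ extracted from the cross-ratio identities can be taken in ${\rm Sp}(1)$ at the same time, equivalently that the residual within-block normalizations are compatible with the single common $\lambda$, and that blocks with one or two points contribute no obstruction beyond what $X$ records. This is exactly where the hypothesis of a \emph{single} $\lambda$ --- rather than one $\lambda_i$ per block as in Proposition \ref{Simpoints} --- is used, and it is the heart of the proposition; the rest is bookkeeping with the homomorphisms $\Pi$, $\Pi_i$ and the cross-ratio transformation rule.
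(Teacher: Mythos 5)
Your overall architecture is the same as the paper's: reduce to the stabilizer $G_{\infty}$ acting on $\z_{\infty}^{\perp}$, obtain necessity from the transformation rule for cross-ratios (Lemma \ref{lem-croproperty}), and for sufficiency build the congruence block by block and then assemble the pieces into one element of $G_{\infty}$. The necessity half is fine. The genuine gap is exactly the step you name and then defer as ``bookkeeping'': checking that the scalars $\nu_i$ can simultaneously be taken in ${\rm Sp}(1)$. That check is the entire content of the sufficiency direction, and it cannot be carried out from the stated hypothesis. The equations $\lambda k_{il}+c_i=w_{il}\nu_i$ with $\nu_i\in{\rm Sp}(1)$ force $|\lambda|\,|k_{il}-k_{id}|=|w_{il}-w_{id}|$ for every pair inside every block, i.e.\ the single number $|\lambda|$ must equal the scale ratio $|w_{il}-w_{id}|/|k_{il}-k_{id}|$ of \emph{every} block at once. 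But each $\chi=[k_{il},k_{ij},k_{it},\infty]$ is unchanged when all coordinates of one block are rescaled by a positive real, and conjugation by $\lambda$ is insensitive to $|\lambda|$; so the hypothesis $X(\p)=\lambda X(\q)\lambda^{-1}$ carries no information whatsoever about these inter-block scale ratios. Your remark that ``the common point $\infty$ forces the maps $h_i$ to share their conjugating scalar'' is a statement about necessity; for sufficiency it is precisely what must be proved, and the data $X$ do not prove it.

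To see that the obstruction is real, take $n=3$ and two blocks of three points each, with first coordinates $(k_{11},k_{12},k_{13})=(0,1,2)$, $(k_{21},k_{22},k_{23})=(0,1,3)$ for $\p$ and $(w_{11},w_{12},w_{13})=(0,1,2)$, $(w_{21},w_{22},w_{23})=(0,2,6)$ for $\q$, attached to $\e_1$ and $\e_2$ respectively. Then $X(\p)=X(\q)=(2,\,3/2)$, so the hypothesis holds with $\lambda=1$; yet any congruence must lie in $G_{\infty}$ (both spans have $\z_{\infty}\bh$ as their unique null fibre) and would need $|\lambda|=1$ from the first block and $|\lambda|=2$ from the second. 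So the two tuples are not $\pspn$-congruent and the ``if'' direction fails. You have in fact reproduced the paper's own argument faithfully, including its weak point: the paper closes this hole by asserting that ``it follows from Lemma \ref{lem-croproperty} that $\lambda_i=\lambda$'', but that lemma identifies $\lambda_i$ with $\lambda$ only up to a positive real factor (and the centralizer of the relevant cross-ratios), which is exactly the ambiguity at issue. A correct statement would have to supplement $X(\p)$ with the inter-block scale data, e.g.\ the real ratios $|k_{i1}-k_{i2}|/|k_{11}-k_{12}|$, or use cross-ratios mixing points from different blocks; as written, neither your argument nor the paper's can be completed.
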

\begin{proof}
	W.o.l.g, we assume that $\p,\q\in \z_{\infty}^{\perp}$.   If  there is an  $f\in\pspn$ such that $f(\p_i)=\q_i, j=1,\cdots,m$.  Then $f\in G_{\infty}$  is of the form \begin{equation}\label{gw1}f=\left(
	\begin{array}{ccc}
	\lambda & \gamma^* & \star\\
	0 & U & \star \\
	0& 0 &\star \\
	\end{array}
	\right) \end{equation}
	and  $\p,\q$ must have the same structure given by Proposition \ref{deg-case}. Here and in what follows, $\star$ stands for   an arbitrary entry satisfying constraint that the corresponding matrix $f$  belongs  to  $\spn$.   By our normalization, we have
	\begin{equation}\label{univect1}U(\e_1,\cdots,\e_k)=(\e_1,\cdots,\e_k)\end{equation}
	and in each block  of   index  $S_i$,  we  also have
 \begin{equation}\label{gwc}\lambda k_{il}+\gamma^*\e_i=w_{il},1\le l\le  {\rm Card}(S_i).\end{equation}
Therefore we have $X(\p)=\lambda X(\q)\lambda^{-1}.$

	Conversely,   suppose that  $X(\p)=\lambda X(\q)\lambda^{-1}.$    By  Proposition \ref{Simpoints},  for  two specific blocks  $\p_{S_i}$ and  $\q_{S_i}$we can construct an element $f_i\in \spn$ of the form
	\begin{equation*}f_i=\left(
	\begin{array}{ccc}
	\lambda_i & \gamma_i^* & \star\\
	0 & U_i & \star \\
	0& 0 &\star \\
	\end{array}
	\right) \end{equation*} such that
	 \begin{equation}\label{gw2}U_ie_i=e_i,\lambda_i k_{il}+\gamma_i^*\e_i=w_{il},1\le l\le  {\rm Card}(S_i).\end{equation}
It is  a pleasant surprise  that  we can adjust $f_i$   to a suitable transformation which works for  $\p$ wholly  as follows.
First, it follows from Lemma \ref{lem-croproperty} that $\lambda_i=\lambda$.
Let $U\in {\rm Sp}(n-1)$ having the property $U(\e_1,\cdots,\e_k)=(\e_1,\cdots,\e_k)$.
It is obvious that  \begin{equation}\label{gw3}h_i=\left(
\begin{array}{ccc}
\lambda & \gamma_i^* & \star\\
0 & U & \star \\
0& 0 &\star \\
\end{array}
\right) \end{equation}
also maps $\p_{S_i}$ to  $\q_{S_i}$.
Note that $k\le n-1$.  Let  $$\gamma=(\gamma_1^*\e_1,\cdots, \gamma_k^*\e_k,\star,\cdots,\star)^T$$
and \begin{equation}\label{gw4}h=\left(
\begin{array}{ccc}
\lambda & \gamma^* & \star\\
0 & U & \star \\
0& 0 &\star \\
\end{array}
\right) .\end{equation}
Then one has the equations (\ref{gwc}),  and therefore  $\p$ and $\q$ are  congruent up to $h$.
	\end{proof}

By the above proof and   Section \ref{sec4sub2}, we have the following result which means that  structures of  Gram matrices  determine their congruent classes when ${\rm Card}(S_i)\le 2$ for all $i$.
\begin{prop}\label{wholepoints2}
	 Let $\p=(\p_{1},\cdots, \p_{m})$ and $\q=(\q_{1},\cdots, \q_{m})$  be two $m$-tuples of pairwise distinct	 positive points of non regular case with the same structure  given by Proposition \ref{deg-case} and ${\rm Card}(S_i)\le 2, i=1,\cdots,k$. Then  $\p$ and $\q$ are  $\pspn$-congruent.
\end{prop}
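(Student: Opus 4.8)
The plan is to put $\p$ and $\q$ in a common normal form inside $\z_\infty^\perp$ and then reassemble a single isometry from the block data, following the pattern of the proof of Proposition~\ref{wholepoints} but with Section~\ref{sec4sub2} in place of the cross-ratio input. First I would use Proposition~\ref{deg-case} and the double transitivity of $\spn$ on $\bhq$ to arrange that $\p,\q\in\z_\infty^\perp$, that both carry the common null vector $\z_\infty$, and (since they have the same structure) the same partition $S_1,\dots,S_k$. Passing to $\bh\bp^{n-1}$ via the homomorphism $\Pi$ of (\ref{dfnpi}) and applying one rotation ${\rm diag}(1,U_0,1)\in\spn$ as in (\ref{univect}), I may put both tuples in coordinates $\tilde{\p}_{s_{il}}=(k_{il},\e_i^T)^T$ and $\tilde{\q}_{s_{il}}=(w_{il},\e_i^T)^T$ with $\e_1,\dots,\e_k$ distinct standard basis vectors of $\bh^{n-1}$; this is legitimate because $k\le n-1$ and the vectors $\alpha_i$ attached to the blocks are mutually orthonormal. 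All the relations $\langle\p_{s_{il}},\p_{s_{id}}\rangle=1$ and $\langle\p_{s_{il}},\p_{s_{jd}}\rangle=0$ survive.

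Next I would treat the blocks one at a time. A block with ${\rm Card}(S_i)=1$ imposes no constraint: for a prescribed $\lambda\in\bh\setminus\{0\}$ and unit $\nu_i$ one simply solves $\lambda k_{i1}+\gamma^*\e_i=w_{i1}\nu_i$ for the relevant component of $\gamma$. A block with ${\rm Card}(S_i)=2$ is, by Proposition~\ref{polar-inter-n}, of the shape $\p_{s_{i2}}=\p_{s_{i1}}+\z_\infty\lambda_i$, so in the coordinate $\phi$ of (\ref{coordinates}) it is the three-point set $\{k_{i1},k_{i2},\infty\}\subset\overline{\bh}$; by Theorem~\ref{thm-twpo}(ii) the pair $(\bp(\p_{s_{i1}}),\bp(\p_{s_{i2}}))$ has parameter $t=1$, and the same holds for $(\bp(\q_{s_{i1}}),\bp(\q_{s_{i2}}))$. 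By the $m=2$ result of Section~\ref{sec4sub2} these two pairs are $\spn$-congruent; the congruence carries the unique null line of the first plane to that of the second, hence fixes $\z_\infty\bh$ and lies in $G_\infty$, and its restriction to $V_i$ is an affine map $h_i\colon k\mapsto\lambda_i k+c_i$ of $\overline{\bh}$ sending $\{k_{i1},k_{i2}\}$ onto $\{w_{i1}\nu_i,w_{i2}\nu_i\}$ with $|\nu_i|=1$.

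The final step is to glue these block-wise isometries into a single element of $G_\infty$, imitating the end of the proof of Proposition~\ref{wholepoints}: arrange that the left multiplier $\lambda_i$ is the same $\lambda$ for every block with ${\rm Card}(S_i)=2$, then choose $U\in{\rm Sp}(n-1)$ with $U\e_i=\e_i\nu_i$ for all $i$ (possible because the $\e_i$ are distinct coordinate vectors and $k\le n-1$), set $\gamma=(c_1,\dots,c_k,\star,\dots,\star)^T$, and complete $(\lambda,U,\gamma)$ to an element $f\in G_\infty\subset\spn$ by Lemma~\ref{ginf}, the remaining entries $\beta,s,\mu$ being pinned down up to the constraints there. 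Then $f\p_{s_{il}}=\q_{s_{il}}\nu_i$ for all $i,l$, which is the asserted $\pspn$-congruence. I expect this compatibility step to be the main obstacle: one must force a single common $\lambda$ across all size-$2$ blocks and simultaneously keep the unit scalars $\nu_i$ coherent. In the regular range of Proposition~\ref{wholepoints} such a common $\lambda$ was produced automatically by Lemma~\ref{lem-croproperty}; here it has to be extracted from the freedom inherent in the degenerate ($t=1$) position of each pair, and this bookkeeping is where the argument needs the most care.
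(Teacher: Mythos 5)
Your proposal follows the only route the paper itself offers (its ``proof'' of this proposition is just the pointer ``by the above proof and Section~\ref{sec4sub2}''), and you have put your finger on exactly the right spot: the need to extract a \emph{single} left multiplier $\lambda$ that works simultaneously for every block of size $2$. But this is not a bookkeeping issue that merely ``needs care'' --- it is a genuine obstruction, and the gluing step fails in general, so neither your argument nor the paper's one-line justification establishes the statement. Any $f\in\spn$ carrying $\p$ to $\q$ must preserve the unique null line $\z_{\infty}\bh$ of $V$, hence lies in $G_{\infty}$ and acts on every block through the same entry $\lambda$ of (\ref{gw}). Within a size-$2$ block the relations $\lambda k_{il}+\gamma^*\e_i=w_{il}\nu_i$ ($l=1,2$, $|\nu_i|=1$) give $\lambda(k_{i1}-k_{i2})=(w_{i1}-w_{i2})\nu_i$ and therefore pin down $|\lambda|=|w_{i1}-w_{i2}|/|k_{i1}-k_{i2}|$. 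Two size-$2$ blocks thus impose two conditions on $|\lambda|$ that need not be consistent, and nothing in the Gram matrix or in the partition records the ratio $|k_{i1}-k_{i2}|/|k_{j1}-k_{j2}|$, which is in fact a congruence invariant.

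Concretely, in $\bh^{3,1}$ with $\z_{\infty}=(1,0,0,0)^T$ take
\begin{equation*}
\p=\bigl((0,1,0,0)^T,(1,1,0,0)^T,(0,0,1,0)^T,(1,0,1,0)^T\bigr),\qquad
\q=\bigl((0,1,0,0)^T,(1,1,0,0)^T,(0,0,1,0)^T,(2,0,1,0)^T\bigr).
\end{equation*}
Both are $4$-tuples of pairwise distinct positive points of non regular type with the same structure $S_1=\{1,2\}$, $S_2=\{3,4\}$ and identical Gram matrices (two diagonal all-ones $2\times2$ blocks), yet the first block forces $|\lambda|=1$ while the second forces $|\lambda|=2$, so they are not $\pspn$-congruent. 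Your construction (and the proposition) is correct only when at most one block has cardinality $2$ --- there the single size-$2$ block determines $|\lambda|$ and the singleton blocks impose no constraint, so your completion via Lemma~\ref{ginf} goes through --- or under the additional hypothesis that $|k_{i1}-k_{i2}|/|k_{j1}-k_{j2}|=|w_{i1}-w_{i2}|/|w_{j1}-w_{j2}|$ for all pairs of size-$2$ blocks. In general an extra family of real invariants (these ratios, which are degenerate cousins of the invariants of Definition~\ref{sec6defpara}) must be adjoined to the structural data before the conclusion of Proposition~\ref{wholepoints2} can hold.
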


In order to describe the parameter space, we need the following result.
\begin{prop}\label{welldef}
	The  coordinates of  ${\bf O}_{X(\p)}$ given by rotation-normalized algorithm is well defined.
\end{prop}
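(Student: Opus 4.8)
Unwinding the statement, it asserts that the rule sending a configuration $\fp$ to the rotation-normalized coordinate of the orbit ${\bf O}_{X(\p)}$ is a well-defined function on the $\pspn$-congruence classes in the non regular case. I plan to prove this in two steps: (i) the conjugacy orbit ${\bf O}_{X(\p)}$ does not depend on any of the auxiliary choices entering the construction of $X(\p)$ in \S\ref{sec6ireg}, so it is an honest invariant of the congruence class of $\fp$; and (ii) the rotation-normalized algorithm, being assembled from the uniqueness assertions behind Lemma \ref{lemtonorm}, is constant on conjugacy orbits and hence descends to a single-valued map on the set of such orbits. Composing (i) and (ii) yields the proposition.

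For (i), first note that the strati-form data — the partition $S(m)=\bigcup_i S_i$ of Proposition \ref{deg-case} together with the ordering of blocks by least element — is read off from the zero pattern of any $1$-normalized Gram matrix of $\fp$, and this pattern survives both the equivalence $G\mapsto D^*GD$ and the replacement of $\p$ by another lift; hence the block structure is canonical. The coordinates $k_{il}$ of (\ref{coorbasis}) do depend on the lift, on $\z_{\infty}$, and on the unitary $U$ normalizing $(\alpha_1,\dots,\alpha_k)$ to $(\e_1,\dots,\e_k)$; but any two admissible choices differ by an element of $G_\infty$ of the shape (\ref{gw}) whose restriction to each block $V_i$ is the affine map $k_{il}\mapsto\lambda k_{il}+c_i$ of (\ref{hipara}), with one common $\lambda\in\bh-\{0\}$ for every $i$ (this is the identity $\lambda_i=\lambda$ in the proof of Proposition \ref{wholepoints}). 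Under such a map the quaternionic cross-ratios making up $X(\p)$ transform by $\chi\mapsto\lambda\chi\lambda^{-1}$ by Lemma \ref{lem-croproperty}(2), so $X(\p)$ changes only by simultaneous conjugation by $\lambda$; since conjugation by $\lambda$ equals conjugation by $\lambda/|\lambda|\in{\rm Sp}(1)$, the orbit ${\bf O}_{X(\p)}$ is unchanged. That the same holds between two genuinely distinct representatives of one congruence class is exactly Proposition \ref{wholepoints} when some ${\rm Card}(S_i)\ge 3$, and is vacuous (the vector $X(\p)$ being empty) when all ${\rm Card}(S_i)\le 2$, where Proposition \ref{wholepoints2} shows the block data alone is already a complete invariant.

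For (ii), the algorithm is applied to $X(\p)$, a row vector over $\bh-\{0,1\}$, which is a conjugation-stable set, exactly as in \S\ref{sec3mv0}: one locates the first two entries whose imaginary parts are $\br$-linearly independent and applies the unique $\mu\in{\rm Sp}(1)/\pm 1$ of Lemma \ref{lemtonorm}; if no such pair exists (all imaginary parts lying on a single line) one rotates by the $\nu$ of (\ref{findnu}) and kills the residual circle by a sign normalization on the first entry in $\bc\setminus\br$; and if every entry is real one keeps $X(\p)$. At each step the output is pinned down either by a uniqueness clause or by an explicit sign convention, hence is unchanged when $X(\p)$ is replaced by $\mu_0 X(\p)\mu_0^{-1}$; so the algorithm is constant on ${\bf O}_{X(\p)}$, and with (i) it is well defined on the congruence class of $\fp$.

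I expect the only genuine obstacle to be the bookkeeping in (ii): one must check that the splitting of the relevant orbits into the generic stratum, the ``collinear imaginary parts'' stratum and the ``all real'' stratum is exhaustive, and that in each non-generic stratum — where the conjugation-stabilizer of $X(\p)$ is an entire circle, or all of ${\rm Sp}(1)$ — the normal form produced is still unique. This is the precise analogue, for vectors of quaternionic cross-ratios, of the case analysis already carried out for $\bp(V_0)$ through the sets $Z(\br)$, $Z(\bc,i)$ and $Z(i,j)$ of \S\ref{sec3mv0}; everything else follows from Propositions \ref{wholepoints} and \ref{wholepoints2} and the transformation law of the quaternionic cross-ratio.
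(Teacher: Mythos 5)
Your proposal is correct and follows essentially the same route as the paper: the paper's proof likewise observes that any two admissible normalizations differ by an element of $G_{\infty}$ and then invokes Lemma \ref{lem-croproperty} together with Proposition \ref{wholepoints} to conclude that $X(\p)$ changes only by conjugation, so the orbit ${\bf O}_{X(\p)}$ and hence its rotation-normalized coordinate are well defined. Your write-up merely makes explicit the bookkeeping (canonicity of the block structure, the common $\lambda$ across blocks, and constancy of the rotation-normalized algorithm on conjugacy orbits) that the paper leaves implicit.
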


\begin{proof}	
If  both $h_1,h_2\in \pspn$  map  $V$ to  a subspace of  $\z_{\infty}^{\perp}$. Then the coordinates in  (\ref{coordinates}) may be different from each other, which implies that $X(\p)$ in Definition \ref{sec6defpara} is dependent on the map $\phi$ in  (\ref{coordinates}).  However, since  $h_1^{-1}h_2\in G_{\infty}$, Lemma \ref{lem-croproperty} and  Proposition \ref{wholepoints} imply that  the  coordinates of  ${\bf O}_{X(\p)}$ given by rotation-normalized algorithm is well defined.
\end{proof}

Summarizing the  previous  results, we obtain the main result of this section.
\begin{thm}\label{parathm}   Let $\p=(\p_{1},\cdots, \p_{m})$ be an $m$-tuple of pairwise distinct
	positive points given by Proposition \ref{deg-case}. Then the $\pspn$-congruence class of $\p$
	is determined  uniquely by the  partition structure of $S(m)=\bigcup_{i=1}^k S_i$ and the  coordinates of  ${\bf O}_{X(\p)}$ given by rotation-normalized algorithm.
\end{thm}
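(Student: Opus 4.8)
The plan is to assemble Theorem \ref{parathm} directly from the invariants and congruence criteria established earlier in the section, together with the well-definedness result of Proposition \ref{welldef}. The statement has two directions packaged into ``determined uniquely by'': first, that the data (partition structure plus the rotation-normalized coordinate of ${\bf O}_{X(\p)}$) is a genuine invariant of the $\pspn$-congruence class, and second, that two $m$-tuples with the same data are $\pspn$-congruent. I would state both directions explicitly and then cite the machinery.

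\medskip

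\noindent\textbf{Invariance.} First I would recall that if $\p$ and $\q$ are $\pspn$-congruent then, by Proposition \ref{deg-case} applied to both (the signature $(k,0,m-k)$ is preserved under the action since $G(\p)=D^*G(f\p)D$ only rescales, and the vanishing/non-vanishing pattern of entries is a congruence invariant by Example \ref{lostinf}'s reasoning), they share the same partition structure $S(m)=\bigcup_{i=1}^k S_i$. Here one must note the partition is canonical: the blocks $S_i$ are exactly the equivalence classes of indices under ``$\langle\p_a,\p_b\rangle\neq 0$'', and the ordering $s_{i1}<\cdots<s_{it_i}$ is fixed, so no ambiguity of labelling arises. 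Next, after conjugating so that $\p,\q\in\z_\infty^\perp$ and applying the normalization $U(\alpha_1,\cdots,\alpha_k)=(\e_1,\cdots,\e_k)$ of (\ref{univect}), Proposition \ref{wholepoints} (when ${\rm Card}(S_i)\ge 3$ for some $i$) or Proposition \ref{wholepoints2} (when all ${\rm Card}(S_i)\le 2$) gives the criterion: congruence is equivalent to $X(\p)=\lambda X(\q)\lambda^{-1}$ for some $\lambda\in\bh-\{0\}$, i.e.\ to ${\bf O}_{X(\p)}={\bf O}_{X(\q)}$. Finally, Proposition \ref{welldef} certifies that the rotation-normalized coordinate assigned to the orbit ${\bf O}_{X(\p)}$ does not depend on the choice of conjugating element $h\in\pspn$ used to move $V$ into $\z_\infty^\perp$, nor on the choice of $U$ in (\ref{univect}); hence the coordinate is a well-defined invariant of the congruence class.

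\medskip

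\noindent\textbf{Sufficiency.} Conversely, suppose $\p$ and $\q$ have the same partition structure and the same rotation-normalized coordinate for ${\bf O}_{X(\p)}$ and ${\bf O}_{X(\q)}$. Having the same coordinate means ${\bf O}_{X(\p)}={\bf O}_{X(\q)}$, i.e.\ there is $\lambda\in\bh-\{0\}$ with $X(\p)=\lambda X(\q)\lambda^{-1}$ (in the all-blocks-$\le 2$ degenerate case $X$ is empty and this is vacuous). Moving both tuples into $\z_\infty^\perp$ with the standard normalization and invoking Proposition \ref{wholepoints} (resp.\ Proposition \ref{wholepoints2}) in the reverse direction produces an $f\in\pspn$ with $f(\p_i)=\q_i\nu_i$ for suitable nonzero $\nu_i$, which is exactly $\pspn$-congruence of the original $m$-tuples. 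Combining the two directions gives the claimed bijection between congruence classes and pairs (partition structure, coordinate of ${\bf O}_{X(\p)}$).

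\medskip

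\noindent The step I expect to be the genuine obstacle is the \emph{coherence across blocks} already isolated in the proof of Proposition \ref{wholepoints}: namely, that one cannot merely congruence-match each block $\p_{S_i}$ to $\q_{S_i}$ separately, since the block-wise transformations $f_i$ carry their own scalars $\lambda_i$ and translation data $\gamma_i$, and these must be reconciled into a single $h\in G_\infty$ of the form (\ref{gw4}). The resolution is the observation (via Lemma \ref{lem-croproperty}, part (2)) that the common conjugating quaternion forces $\lambda_i=\lambda$ for all $i$, after which the translation parts $\gamma_i^*\e_i$ can be packaged into one vector $\gamma$ because $k\le n-1$ leaves enough coordinates free. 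In the write-up of Theorem \ref{parathm} itself this is not re-proved but quoted; the theorem is essentially a bookkeeping corollary that ties Propositions \ref{deg-case}, \ref{wholepoints}, \ref{wholepoints2} and \ref{welldef} into one clean statement, so the main care needed is to state precisely what ``the coordinates of ${\bf O}_{X(\p)}$ given by rotation-normalized algorithm'' refers to (the image under the map $\psi$-type procedure of Section \ref{sec3mv0}, now applied to the orbit ${\bf O}_{X(\p)}$ under $\mu\mapsto\mu X(\p)\mu^{-1}$) and to check that the two pieces of data together are both necessary and sufficient.
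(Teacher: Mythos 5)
Your proposal is correct and follows essentially the same route as the paper, which states Theorem \ref{parathm} with no separate argument beyond ``summarizing the previous results'': the partition structure comes from Proposition \ref{deg-case}, the congruence criterion in terms of ${\bf O}_{X(\p)}$ from Propositions \ref{wholepoints} and \ref{wholepoints2}, and the well-definedness of the rotation-normalized coordinate from Proposition \ref{welldef}. Your explicit two-direction write-up, and your identification of the cross-block coherence step (forcing a common $\lambda$ and assembling the $\gamma_i$ into one $\gamma$) as the real content, is a faithful and slightly more careful rendering of what the paper leaves implicit.
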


Therefore the moduli space can be described as follows.
\begin{thm}\label{paramodthm} The moduli space of  $\p=(\p_{1},\cdots, \p_{m})$  given by Proposition \ref{deg-case} can be identified with the set $\bm_1\times \cdots \times \bm_k$, where $\bm_1\times \cdots \times \bm_k$  are the  coordinates of  ${\bf O}_{X(\p)}$ given by rotation-normalized algorithm.
\end{thm}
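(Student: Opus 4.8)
The plan is to promote Theorem \ref{parathm} from a uniqueness statement to an explicit parametrization, in exactly the way Theorem \ref{mainthm} was obtained from Theorem \ref{thmparam} in Section \ref{sec3mv0}. Throughout I fix the partition structure $S(m)=\bigcup_{i=1}^{k}S_i$ produced by Proposition \ref{deg-case}; it is a discrete invariant of the $\pspn$-congruence class, and the whole parabolic locus of the moduli space is the disjoint union, over admissible structures, of the pieces treated below. A block with ${\rm Card}(S_i)\le 2$ carries no invariant by Proposition \ref{wholepoints2}, so for such $i$ we let $\bm_i$ be a point; when ${\rm Card}(S_i)\ge 3$ the invariant $X_i(\p)$ of Definition \ref{sec6defpara} lies in $(\bh-\{0,1\})^{t_i-2}$. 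By Proposition \ref{wholepoints} and the well-definedness in Proposition \ref{welldef}, two tuples of the fixed structure are $\pspn$-congruent if and only if the concatenated vectors $X(\p)=(X_1(\p),\dots,X_k(\p))$ and $X(\q)$ lie in one orbit ${\bf O}_{X(\p)}$ under the simultaneous conjugation $X\mapsto\lambda X\lambda^{-1}$, $\lambda\in\bh-\{0\}$. Thus the moduli space with this structure is in bijection with the set of such orbits, and it remains to coordinatize that orbit space.

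First I would build a normalization section by the rotation-normalized algorithm, copying the construction of $\bm(n,m)$ in Theorem \ref{thmparam}. Reading the entries of $X(\p)$ in the fixed order induced by $(S_1,\dots,S_k)$, locate the first entry $\chi$ with $\Im\chi\neq 0$ and conjugate by $\nu(\Im\chi)$ of (\ref{findnu}); then locate the first later entry whose imaginary part is linearly independent from $\Im\chi$ and conjugate by the corresponding $\mu$ of (\ref{findmu}). By Lemma \ref{lemtonorm} these at-most-two rotations are uniquely determined and exhaust the residual conjugation ambiguity: after one rotation the stabilizer is the copy of $\bc^{*}$ commuting with the normalized imaginary part, which acts trivially on every remaining entry since all their imaginary parts are then parallel to it; after two rotations only $\br^{*}$ survives, which acts trivially in any case. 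Let $\psi$ denote the resulting map on orbits and define $\bm_i$ to be the set of $i$-th blocks of the normalized vectors $\psi({\bf O}_{X(\p)})$.

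The heart of the proof is that $\psi$ is a bijection onto the Cartesian product $\bm_1\times\cdots\times\bm_k$. Injectivity is the uniqueness clause of Lemma \ref{lemtonorm}. For surjectivity onto the full product one uses the orthogonality of the blocks guaranteed by Proposition \ref{deg-case}: given $(\tilde X_1,\dots,\tilde X_k)$ with $\tilde X_i\in\bm_i$, choose for each $i$ points in $\overline{\bh}$ realizing the cross-ratios recorded in $\tilde X_i$ (possible by Proposition \ref{Simpoints}), lift them into $\z_{\infty}^{\perp}$ along distinct standard basis directions via (\ref{coorbasis}), and assemble a single $m$-tuple $\p$ of the prescribed structure. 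Since the coordinates (\ref{coordinates}) of all the $V_i$ refer to the common null line $\z_{\infty}$, the global $\lambda$ and the block translations $\gamma_i$ of Proposition \ref{wholepoints} have to be chosen at once; the point is that one common $\lambda$ suffices because the $\tilde X_i$ are already in the normal form produced by $\psi$, so that $\psi({\bf O}_{X(\p)})=(\tilde X_1,\dots,\tilde X_k)$. That the resulting bijection is a homeomorphism follows as in Theorem \ref{mainthm}: the invariants depend continuously on a $1$-normalized Gram matrix and on the coordinates (\ref{coordinates}), which in turn depend continuously on the $m$-tuple, and $\bm_1\times\cdots\times\bm_k$ carries the topology induced from a power of $\bh$.

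The main obstacle I expect is precisely this surjectivity onto the Cartesian product, i.e.\ verifying that the block-wise normalized coordinates vary independently. The subtlety is that the conjugation by $\lambda$ is global while each block also has its own translation freedom $\gamma_i$: normalizing the designated block constrains $\lambda$, and one has to check this never conflicts with the values already fixed in the other blocks. Orthogonality of the $\p_{S_i}$ decouples the blocks inside the Gram matrix but not inside the inhomogeneous coordinates of the $V_i$, which all live over $\z_{\infty}$; the reconciliation comes from letting the $\gamma_i$ absorb the block-dependent part of each transformation while a single $\lambda$ handles the rotation, and it is the uniqueness in Lemma \ref{lemtonorm} (equivalently the triviality of the relevant ${\rm SO}(3)$-stabilizer) that makes this simultaneous choice consistent.
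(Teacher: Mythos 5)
Your skeleton is the one the paper intends: the paper offers no written proof of this theorem (it is stated as a summary of Theorem \ref{parathm} and Propositions \ref{wholepoints}, \ref{wholepoints2} and \ref{welldef}), and the ingredients you assemble --- fixing the partition structure of Proposition \ref{deg-case} as a discrete invariant, reducing congruence to simultaneous conjugation of $X(\p)$ via Proposition \ref{wholepoints}, taking $\bm_i$ to be a point when ${\rm Card}(S_i)\le 2$, counting $X_i(\p)\in(\bh-\{0,1\})^{t_i-2}$, and getting injectivity of the normalization from the uniqueness clause of Lemma \ref{lemtonorm} --- are all correct and are exactly what the paper relies on.

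The gap is in the step you yourself single out as the heart of the matter: surjectivity of $\psi$ onto the Cartesian product $\bm_1\times\cdots\times\bm_k$, with $\bm_i$ defined as the set of $i$-th blocks of normalized vectors. This fails, because the rotation-normalization is global: which entries of $X(\p)$ get forced into $\br^{1+}$, into $\bc$, or into $\br^{2+}$ is determined by the positions of the first non-real and first non-complex entries read across \emph{all} blocks, so the image of $\psi$ is a union of strata (as in the decomposition into $Z(\br)$, $Z(\bc,i)$, $Z(i,j)$ in Section \ref{sec3mv0}) and is strictly smaller than the product of its block-wise projections. Concretely, with $k=2$ and one cross-ratio per block, both $(2\mid 1+\bi)$ and $(1+\bi\mid 1+\bj)$ are normalized vectors, so $2\in\bm_1$ and $1+\bj\in\bm_2$; but $(2\mid 1+\bj)$ is not normalized (its first non-real entry would be rotated into $\br^{1+}$, giving $(2\mid 1+\bi)$), hence it lies in $\bm_1\times\bm_2$ but not in the image of $\psi$. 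Your justification --- ``one common $\lambda$ suffices because the $\tilde X_i$ are already in the normal form produced by $\psi$'' --- is where this breaks: each $\tilde X_i$ being the $i$-th block of \emph{some} normalized vector does not make the concatenation a normalized vector, and since Proposition \ref{wholepoints} demands a single $\lambda$ conjugating all blocks at once, the blocks cannot be normalized independently. The defensible content of the theorem is only that the moduli space (for a fixed partition structure) is in bijection with the image of $\psi$, i.e.\ with the set of rotation-normalized representatives of the orbits ${\bf O}_{X(\p)}$ inside $\prod_i(\bh-\{0,1\})^{t_i-2}$; a literal product decomposition holds only when at most one block has ${\rm Card}(S_i)\ge 3$. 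Your proof should either drop the product claim and conclude with the bijection onto the image, or explicitly describe that image as the union of strata indexed by the positions of the two distinguished entries.
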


\section{Moduli space on $\bp(V_+)$ of case $m\geq 3$: regular cases}\label{sec7reg}
In this section, we describe the moduli space of configurations of quaternionic $(n-1)$-dimensional submanifolds  when  $V$  is not parabolic in conceptual style.  The basic idea  is  to find  a partition of  $S(m)=\{1,\cdots,m\}$ to perform rotation-normalized algorithm  in each  block.

We begin with 1-normalized matrix of $\p$.   Proposition \ref{regula-case} roughly shows that we can treat the mutually orthogonal blocks $\p_{S_i}=(\p_{s_{i1}},\cdots,\p_{s_{it_i}}),i=1,\cdots,s$  separately.  Equivalently, we can perform the rotation-normalized algorithm separately.  This is the structure of Gram matrix at  top level.
  For each block $\p_{S_i}$, there may still exist $0$s in $G(\p_{S_i})$.   We may need to  partition $S_i$  into more small blocks to perform rotation-normalized algorithm.
  We call such a partition process, together with similar $1$-normalized process in each small blocks,  {\it block-normalized algorithm}. The output of   block-normalized algorithm is a special kind of Gram matrix, which is still not unique and can be viewed as an equivalent class. We still need to apply rotation-normalized algorithm to get the parameters.

\medskip
We describe block-normalized algorithm conceptually as follows.

\medskip
  {\bf Block-normalized algorithm:}
	\begin{itemize}
	\item[Step 1:]
Let $O_{il}$ be the number of entries being zero in $il$th row  of   $G(\p_{S_i})$ and record  the set  of columns  of these entries being nonzero as $P_{il}$.  Let $n_i=\min\{O_{i1},\cdots, O_{it_i}\}$ and $K_i$ the set of indices $il$  such that  $O_{il}=n_i$.
Let  $c_{i1}$ be the smallest integer in $K_i$ and denote the corresponding $P_{il}$ of $c_{i1}$ as $S_{i1}$.  In other words,   $c_{i1}$ is the smallest index in $S_i=\{s_{i1},\cdots,s_{it_{i}}\}$  such that  the cardinality of nonzero entries in the $c_{i1}$th row of  $G(S_i)$ is the largest among those of  the others; the set of columns of nonzero entries is recorded  as $S_{i1}$. It is obvious that $c_{i1}\in S_{i1}$.

\item [Step 2:] Repeating the process in {\it Step 1} for the  remainder of $S_i-S_{i1}$, we obtain $c_{i2}$ and $S_{i2}$. It is obvious that we can continue this process only finite steps. We denote by  $\tau_i$   the  number of  steps  and record  the corresponding numbers in each step  as $c_{ij}$ and $S_{ij}$ for $1\le j \le \tau_i$.
Then we have
$$S_i=\bigcup_{j=1}^{\tau_i} S_{ij}.$$

\item [Step 3:]  In each subindex set  $S_{ij}$, we perform  the $c_{ij}$-normalized process  to $G(\p_{S_{ij}})$.  We denote such result of the sub Gram matrix as  $G_b(\p_{S_{ij}})$.   In other words, the entries of  $G_b(\p_{S_{ij}})$ have the following properties:
$$g_{tt} = 1,    g_{c_{ij} t}\geq 0,   g_{t c_{ij}}\geq 0,  t\in S_{ij}.$$
  As in (\ref{d-1}) of Section \ref{sec5v+}, we record the corresponding  normalized sub-diagonal matrix as $D_{ij}$. That is
$$G_b(\p_{S_{ij}})= G(\p_{S_{ij}}D_{ij}).$$
\item [Step 4:]    Let  \begin{equation}\label{d-regp} D_i={\rm diag}(D_{i1},\cdots, D_{i\tau_i}), D_b={\rm diag}(D_1,\cdots, D_s).\end{equation}
We define
\begin{equation}
G_b(\p_{S_i})= G(\p_{S_i} D_i)
\end{equation}
and
\begin{equation}G_b(\p)= G(\p D_b).\end{equation}
 \end{itemize}
\medskip

\begin{dfn}The  Gram matrix $G_b(\p)$  obtained by the above  block-normalized algorithm is called the  block-normalized matrix of  $G(\p)$.\end{dfn}

We mention that our strategy in block-normalized algorithm is from parts to entirety. We deal with the diagonal blocks separately.  In this scale $\p_{S_i} $ and $\p_{S_j} $ are totally  independent.   In each block  $\p_{S_i} $,  all processes   are explicitly recorded by  the corresponding   sub-diagonal matrices $D_i={\rm diag}(D_{i1},\cdots, D_{i\tau_i})$.  In this way the entries  $\langle \p_{s_{il}}, \p_{s_{id}}\rangle$  in  off-diagonal blocks of  $\p_{S_i} $ are all determined definitely by $D_i$. We describe the structure of $G_b(\p)$ in the following proposition in more details.

\begin{prop}\label{struct-subgram}
	 The block-normalized  Gram matrix $G_b(\p)$  has the following characteristics.
	\begin{itemize}
	\item[(1)]
If we view the block-normalized  Gram matrix $G_b(\p)$ in its permuted  position with index $S_i$, then $G_b(\p)$ consists of blocks  submatrix  $G_b(\p_{S_{i}})$, the entries of the  corresponding off-diagonal blocks  matrices are zero (see (\ref{partregu}) in Proposition  \ref{regula-case}).

	\item[(2)] In the $c_{i1}$th row (and column)  of submatrix  $G_b(\p_{S_{i}})$, the first  ${\rm Card}(S_{i1})$ entries  are nonzero real numbers, the others  are zeros (see Step 2 of block-normalized algorithm).

		\item[(3)] In the $c_{i2}$th row (and column) of submatrix  $G_b(\p_{S_{i}})$,  the entries with index between ${\rm Card}(S_{i1}) +1 $ and   ${\rm Card}(S_{i1}) +{\rm Card}(S_{i2})$  are nonzero real numbers,  the entries  with index bigger than   ${\rm Card}(S_{i1}) +{\rm Card}(S_{i2})$  are zeros; the entries in the $c_{ij}$th row (and column) of submatrix  $G_b(\p_{S_{i}})$ can be described similarly when $j=3,\cdots,\tau_i$.
	
			\item[(4)]   $G_b(\p_{S_{i}})$ can not be block diagonal according to our partition in Proposition \ref{regula-case}.
 \end{itemize}

	\end{prop}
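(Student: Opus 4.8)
The plan is to reduce all four assertions to one elementary observation together with the indecomposability supplied by Proposition \ref{regula-case}. The observation is that for an invertible diagonal matrix $D={\rm diag}(d_1,\dots,d_r)$ the congruence $G\mapsto D^*GD$ multiplies the $(a,b)$-entry by $\bar d_a(\,\cdot\,)d_b$, so it preserves exactly which entries vanish and which do not, turns a nonzero entry into a nonzero quaternion, and, if $D$ is block diagonal, preserves any conformal block-diagonal structure. Recalling that $G_b(\p)=G(\p D_b)=D_b^*G(\p)D_b$ with $D_b={\rm diag}(D_1,\dots,D_s)$ and $D_i={\rm diag}(D_{i1},\dots,D_{i\tau_i})$ all block diagonal conformally with the (sub-)partitions produced by the block-normalized algorithm, everything will follow by unwinding Steps 1--4.

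For (1) I would first apply Proposition \ref{permu-Gram} with the permutation $\sigma$ that lists the indices of $S(m)$ in the order $S_1,\dots,S_s$; this only relabels rows and columns. In the relabelled matrix the off-diagonal blocks have entries $\langle\p_{s_{il}},\p_{s_{jd}}\rangle$ with $i\neq j$, which vanish by Proposition \ref{regula-case}, so $G(\sigma(\p))$ is block diagonal with diagonal blocks $G(\p_{S_i})$. Since $D_b$ is block diagonal conformal with this decomposition, $D_b^*G(\sigma(\p))D_b$ is again block diagonal, and by Step 4 its $i$-th diagonal block is $D_i^*G(\p_{S_i})D_i=G(\p_{S_i}D_i)=G_b(\p_{S_i})$, which is (1).

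For (2) and (3) I would zoom in on a single block $G_b(\p_{S_i})=G(\p_{S_i}D_i)$ and read the indices of $S_i$ in the order $S_{i1},S_{i2},\dots,S_{i\tau_i}$ (the column statements then follow by Hermitian symmetry). By the choice of $c_{i1}$ and $S_{i1}=P_{i,c_{i1}}$ in Step 1, the $c_{i1}$-th row of $G(\p_{S_i})$ is nonzero precisely on the columns of $S_{i1}$ (note $c_{i1}\in S_{i1}$, so the diagonal entry is among them) and zero on the rest of $S_i$; applying $D_i$ keeps this pattern, and the $c_{i1}$-normalization of Step 3 forces the surviving entries $g_{c_{i1}t}$, $t\in S_{i1}$, to be nonnegative reals, in fact positive since they are nonzero; this is (2). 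For (3), after Step 2 has removed $S_{i1},\dots,S_{i,j-1}$, the row $c_{ij}$ restricted to the columns of $S_i\setminus(S_{i1}\cup\cdots\cup S_{i,j-1})$ is nonzero exactly on $S_{ij}$ and zero beyond it; the $c_{ij}$-normalization makes the entries indexed by $S_{ij}$ positive reals, and --- the one point requiring a moment's care --- the subsequent normalizations $D_{i,j+1},\dots,D_{i\tau_i}$ act only on the coordinates indexed by $S_{i,j+1},\dots,S_{i\tau_i}$ and hence leave those already-fixed entries untouched; translating ${\rm Card}(S_{i1})+\cdots+{\rm Card}(S_{ij})$ into the stated index ranges finishes (3). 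The entries of the row $c_{ij}$ lying in $S_{i1}\cup\cdots\cup S_{i,j-1}$ are left unconstrained, consistent with the statement.

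Finally, (4) is immediate from Proposition \ref{regula-case}: were $G_b(\p_{S_i})=G(\p_{S_i}D_i)$ block diagonal with respect to some nontrivial partition of $S_i$, then, as $D_i$ is invertible and diagonal, an off-block entry $\bar d_{s_{il}}\langle\p_{s_{il}},\p_{s_{id}}\rangle d_{s_{id}}$ would vanish only if $\langle\p_{s_{il}},\p_{s_{id}}\rangle=0$, contradicting the indecomposability of $\p_{S_i}$ asserted in Proposition \ref{regula-case}. I do not expect a genuine obstacle; the only slightly delicate part is the index bookkeeping in (3), namely checking that the normalization performed inside $S_{ij}$ does not disturb the data recorded for $S_{i1},\dots,S_{i,j-1}$ --- which is exactly why $D_i$ is constructed as a block-diagonal matrix acting independently on each $S_{ij}$.
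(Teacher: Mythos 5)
Your proof is correct and follows exactly the route the paper intends: the paper states this proposition without a formal proof, treating it as a direct readout of Steps 1--4 of the block-normalized algorithm together with Proposition \ref{permu-Gram}, the orthogonality/indecomposability in Proposition \ref{regula-case}, and the fact that diagonal congruence $G\mapsto D^*GD$ preserves the zero pattern. Your unwinding, including the check that $D_{i,j+1},\dots,D_{i\tau_i}$ do not disturb the entries already normalized in row $c_{ij}$, supplies precisely the bookkeeping the paper leaves implicit.
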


Similarly to Lemma  \ref{lem-actsemi},  we have the following result.

\begin{lem}\label{actsemip}
Suppose that   $G(\q)$  is a block-normalized Gram matrix  $G_b(\p)$  for  $\p=(\p_{1},\cdots, \p_{m})$.   Then   $G(\q D_r)$  is  still a block-normalized Gram matrix  with  $$D_r={\rm diag}(\mu_1, \cdots, \mu_m)$$  if  only if  every $\mu_t$ with  $t\in S_{ij}$ is  the same quaternion of modulus $1$,i.e., $$\mu_t=\mu_{ij}, \forall t\in S_{ij}, \ \mbox{where} \  \mu_{ij}\in {\rm Sp}(1).$$
\end{lem}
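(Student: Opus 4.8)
The plan is to mimic the proof of Lemma~\ref{lem-actsemi}, working entry by entry with the identity $G(\q D_r)=D_r^*G(\q)D_r$ from (\ref{isolift1}), whose $(s,t)$-entry is $\bar\mu_s g_{st}\mu_t$. First I would record two structural observations. Conjugation by the diagonal matrix $D_r$ of nonzero quaternions leaves the zero-pattern of $G(\q)$ unchanged, so the partition $S(m)=\bigcup_i S_i$ into mutually orthogonal blocks and the further partition $S_i=\bigcup_j S_{ij}$ produced by Steps~1--2 of the block-normalized algorithm depend only on this zero-pattern and on the ordering of the indices; hence they are literally the same for $G(\q D_r)$. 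Consequently $G(\q D_r)$ is block-normalized if and only if it satisfies the defining conditions of Step~3, namely $g_{tt}=1$ for all $t$ and, in each sub-block $S_{ij}$, $g_{c_{ij}\,t}\ge 0$ and $g_{t\,c_{ij}}\ge 0$ for all $t\in S_{ij}$. Note also that for $t\in S_{ij}$ with $t\neq c_{ij}$ the entry $g_{c_{ij}\,t}$ of the (already block-normalized) matrix $G(\q)$ is a \emph{strictly} positive real: it is nonnegative by Step~3, and it is nonzero because $S_{ij}=P_{i c_{ij}}$ is by construction the set of columns on which that row is nonzero.

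Next I would establish necessity. The diagonal condition $g_{tt}(\q D_r)=\bar\mu_t\cdot 1\cdot\mu_t=|\mu_t|^2=1$ forces $\mu_t\in{\rm Sp}(1)$ for every $t$. Fix a sub-block $S_{ij}$ and an index $t\in S_{ij}$. Writing $r=g_{c_{ij}\,t}>0$, the corresponding entry of $G(\q D_r)$ is $\bar\mu_{c_{ij}}\,r\,\mu_t=r\,(\bar\mu_{c_{ij}}\mu_t)$, so the requirement that it be a nonnegative real forces $\bar\mu_{c_{ij}}\mu_t$ to be a nonnegative real; but $\bar\mu_{c_{ij}}\mu_t$ is a unit quaternion, and a unit quaternion that is a nonnegative real must equal $1$. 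Hence $\mu_t=\mu_{c_{ij}}$ for all $t\in S_{ij}$; setting $\mu_{ij}:=\mu_{c_{ij}}\in{\rm Sp}(1)$ gives exactly the asserted condition.

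For the converse I would simply verify that when $\mu_t=\mu_{ij}\in{\rm Sp}(1)$ for all $t\in S_{ij}$ every normalization condition of Step~3 holds. The diagonal entries become $|\mu_{ij}|^2=1$. For $s,t$ in distinct blocks $S_i,S_j$ the entry $g_{st}=0$ stays $0$, and for $s\in S_{ij}$, $t\in S_{ij'}$ with $j\neq j'$ the zero/nonzero pattern described in Proposition~\ref{struct-subgram}(2)--(3) is preserved since conjugation by units cannot create or destroy zeros. Finally, for $t\in S_{ij}$ the entry $g_{c_{ij}\,t}(\q D_r)=\bar\mu_{ij}\,g_{c_{ij}\,t}\,\mu_{ij}=g_{c_{ij}\,t}$ because $g_{c_{ij}\,t}$ is real and commutes with $\mu_{ij}$; likewise $g_{t\,c_{ij}}(\q D_r)=g_{t\,c_{ij}}$ by hermiticity. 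Thus $G(\q D_r)$ is again a block-normalized Gram matrix of the same combinatorial type.

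The only point requiring care — and the one I expect to be the main (and only minor) obstacle — is the bookkeeping that the combinatorial output of the block-normalized algorithm, i.e.\ the partitions $S_i=\bigcup_j S_{ij}$ and the pivot indices $c_{ij}$, is invariant under conjugation by $D_r$. Once that is isolated, the argument reduces to the same unit-quaternion fact used in Lemma~\ref{lem-actsemi}: a unit quaternion is a nonnegative real precisely when it equals $1$.
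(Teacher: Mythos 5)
Your proof is correct and follows exactly the route the paper intends (the paper itself omits the proof, merely saying it is ``similar to Lemma~\ref{lem-actsemi}''): the diagonal entries force $\mu_t\in{\rm Sp}(1)$, the strictly positive pivot-row entries $g_{c_{ij}t}>0$ force $\bar\mu_{c_{ij}}\mu_t=1$ within each sub-block, and the converse is a direct check. Your added observation that the partitions $S_i=\bigcup_j S_{ij}$ and the pivots $c_{ij}$ depend only on the zero-pattern, which is preserved under $G\mapsto D_r^*GD_r$, is a worthwhile point that the paper leaves implicit.
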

Summarizing the previous treatments,  we have the following procedure.
\begin{thm}\label{noparathm}
	Let $\p=(\p_{1},\cdots, \p_{m})$ be an  $m$-tuple of pairwise distinct positive point given by Proposition \ref{regula-case}. We can assign the $\pspn$-congruence class of $\p$ a coordinate as follows.
	
	\begin{itemize}
	\item[(1)] Obtain a block-normalized  matrix $G(\p D_b)$  by performing  the block-normalized algorithm, where  $D_b$ is given by (\ref{d-regp}).

	\item[(2)]   Perform the rotation-normalized algorithm to each block $S_{ij}$ (as the case of $m$-tuple of $\bp(V_0)$ in  Section \ref{sec3mv0}).  This is equivalent to choosing a specific $\mu_{ij}\in {\rm Sp}(1)$.   Combine them to the corresponding whole rotation  normalized  diagonal  matrix $D_r$
	
	\item[(3)]  The independent entries of  $$G(\p D_bD_r),$$   that is, all the entries above the diagonal entries,  are the desired  coordinate of the $\pspn$-congruent class of $\p$.
 \end{itemize}
	\end{thm}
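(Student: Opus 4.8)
The plan is to reduce the assertion to two facts: that the coordinate produced depends only on the $\pspn$-congruence class of $\p$, and that distinct classes receive distinct coordinates, so that it is genuinely a coordinate. Both rest on a dictionary between congruence of tuples and diagonal equivalence of Gram matrices, which is an honest equivalence precisely because $V$ is non-degenerate in the regular case. The whole argument then splits into: record the dictionary; check that the combinatorial skeleton produced by the algorithm is an invariant; identify the residual symmetry surviving block-normalization; and verify that rotation-normalization removes it.

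First I would record the dictionary. By (\ref{isofp1}) and (\ref{isolift1}), replacing $\p$ by a $\pspn$-congruent tuple or by another lift replaces $G(\p)$ by $D^*G(\p)D$ with $D$ invertible diagonal, so the congruence class determines the diagonal-equivalence class $[G(\p)]$. Conversely, if $G(\p)=D^*G(\q)D$ then $\q'=\q D$ satisfies $G(\q')=G(\p)$, and since by Theorem \ref{thm-inertia} the Hermitian form is non-degenerate on $V={\rm span}\{\p_1,\cdots,\p_m\}$ in the regular case, equality of Gram matrices forces $\sum_i\p_ia_i=0\Leftrightarrow\sum_i\q'_ia_i=0$; hence $\p_i\mapsto\q'_i$ is a well-defined, Hermitian-form-preserving isomorphism of the two spans, which by Witt's extension theorem extends to some $f\in\spn$ with $f\p=\q'$. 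Thus $\p$ and $\q$ are $\pspn$-congruent if and only if $G(\p)\sim G(\q)$, and it suffices to show that $G(\p D_bD_r)$ is a complete invariant of $[G(\p)]$.

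Next I would check that the combinatorial skeleton of the algorithm is an invariant of $[G(\p)]$: the zero-pattern of a Hermitian matrix is unchanged under $G\mapsto D^*GD$ (since $d_i\neq0$), so the partition of Proposition \ref{regula-case}, the finer partition $S_i=\bigcup_jS_{ij}$, and the distinguished rows $c_{ij}$ are all determined by $[G(\p)]$; running the algorithm produces $G_b(\p)=G(\p D_b)\in[G(\p)]$ with the structure described in Proposition \ref{struct-subgram}. The key reduction is then Lemma \ref{actsemip}: if two block-normalized Gram matrices with the same skeleton are diagonally equivalent, the conjugating diagonal matrix is constant of modulus one on each $S_{ij}$. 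Hence $G_b(\p)$ is canonical up to the residual group $R=\prod_{i,j}{\rm Sp}(1)$, acting on an entry $g_{st}$ with $s\in S_{ij}$, $t\in S_{i'j'}$ by $g_{st}\mapsto\overline{\mu_{ij}}\,g_{st}\,\mu_{i'j'}$ (and fixing the zero off-diagonal blocks between different $S_i$). On the entries of $G_b(\p_{S_{ij}})$ lying above the diagonal and off the $c_{ij}$-row, $\mu_{ij}$ acts by conjugation $v\mapsto\overline{\mu_{ij}}\,v\,\mu_{ij}$, which is exactly the situation treated in Section \ref{sec3mv0}; so applying the rotation-normalized algorithm (via Lemma \ref{lemtonorm}) block by block selects a canonical $\mu_{ij}$, and the off-diagonal blocks joining $S_{ij}$ to $S_{ij'}$ are then pinned down by the chosen $\mu_{ij},\mu_{ij'}$. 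This yields a well-defined matrix $G(\p D_bD_r)$ whose supra-diagonal entries, together with the partition data, are the claimed coordinate; by the dictionary, equality of these matrices is equivalent to $\pspn$-congruence, so the coordinate is complete.

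The hard part will be this last reduction: showing that the rotation-normalized algorithm truly exhausts the residual group $R$. Inside a single block the orbit of ${\rm Sp}(1)/\{\pm1\}$ has a canonical representative by the Section \ref{sec3mv0} analysis, but the sign $\mu_{ij}\mapsto-\mu_{ij}$ acts trivially inside $S_{ij}$ while multiplying by $-1$ every off-diagonal entry joining $S_{ij}$ to some $S_{ij'}$; one must therefore fix this sign as well — for instance by prescribing the sign (or the sign of the real part) of the first nonzero entry joining the two blocks in the order of Proposition \ref{struct-subgram} — and then verify that this, together with the internal square-root and ``smallest-index'' choices in the algorithm, leaves no freedom. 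This is bookkeeping rather than a new idea, but it has to be carried out block by block in the prescribed order, and that is where the bulk of the work lies.
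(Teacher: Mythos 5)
Your overall route is the same as the paper's: the paper offers no separate proof of this theorem (it is introduced with ``Summarizing the previous treatments'') and its implicit argument is exactly the skeleton you reconstruct from the block-normalized algorithm, Proposition \ref{struct-subgram} and Lemma \ref{actsemip}. Two of your additions go beyond what is written and are genuinely needed: the dictionary (congruence of tuples $\Leftrightarrow$ diagonal equivalence of Gram matrices, via non-degeneracy of the form on $V$ in the regular case plus Witt extension) is nowhere proved in the paper, and without it the claim that the entries of $G(\p D_bD_r)$ determine the $\pspn$-class has no content; and your remark that the zero pattern, hence the partition and the rows $c_{ij}$, is itself an invariant of the equivalence class is the unstated reason the algorithm's outputs are even comparable across congruent tuples.

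The gap is in the step you defer to ``bookkeeping''. After block-normalization, Lemma \ref{actsemip} leaves the residual group $\prod_{i,j}{\rm Sp}(1)$, and you propose to exhaust it by rotation-normalizing inside each $S_{ij}$ and then fixing one leftover sign per sub-block. But the stabilizer of the normalized internal vector of a sub-block is $\{\pm1\}$ only when the imaginary parts of its internal entries span a plane in $\Im(\bh)$. If $S_{ij}$ is a singleton there are no internal entries at all; if ${\rm Card}(S_{ij})=2$ the unique internal entry is already real after the $c_{ij}$-normalization; if ${\rm Card}(S_{ij})=3$ there is a single free internal entry, whose conjugation stabilizer is at least a circle. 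In all these cases the residual stabilizer is a circle or all of ${\rm Sp}(1)$, not $\{\pm1\}$, and it acts nontrivially on the linking entries $g_{st}\mapsto\overline{\mu_{ij}}\,g_{st}\,\mu_{ij'}$, so prescribing a sign does not make the coordinate well defined. (Concretely: $S_i=S_{i1}\cup S_{i2}$ with ${\rm Card}(S_{i1})=2$ and ${\rm Card}(S_{i2})=1$ leaves two linking entries moving in a full ${\rm Sp}(1)\times{\rm Sp}(1)$-orbit.) A correct normalization must feed the linking entries themselves into the rotation-normalization, pinning each $\mu_{ij'}$ down relative to the already-chosen $\mu_{ij}$ by normalizing the first nonzero entry of the off-diagonal block, rather than normalizing each $S_{ij}$ in isolation. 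This defect is shared by the paper's own statement, but as written your completeness argument for the coordinate does not close.
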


We now are ready to give a conceptual description of the parameter space $\bm(n,m)$ in regular case. We mimic  conceptually the method used in Section \ref{submpv0} as follows.

\medskip
{\bf  The procedure of  constructing parameter space:}

  For a partition $\mathcal{S}$ of $S(m)=\{1,\cdots,m\}$ as
  $$S_i=\{s_{i1},\cdots,s_{it_{i}}\},s_{i1}<\cdots<s_{it_{i}} ,i=1,\cdots,s$$  with sub partitions
  $$S_i=\bigcup_{j=1}^{\tau_i} S_{ij}.$$ Let ${\rm Card}(S_{ij})=\sigma_{ij}.$  As in  Section \ref{submpv0}, we construct the parameter  space $\bm(n,\sigma_{ij})$ of $S_{ij}$. Let $$\bm(n,i)=\bm(n,\sigma_{i1})\times \cdots \bm(n,\sigma_{i\tau_{i}})\times \mathbb{C}_i, $$
where the set of $\mathbb{C}_i$  is the corresponding  space of the off-diagonal sub-blocks.  Let
  $$\bm(n,m,\mathcal{S})= \bm(n,i)\times \cdots \bm(n,s).$$
 The Hermitian matrix constructed from the entries of the parameter space $\bm(n,m,\mathcal{S})$ should subject to analogous constraints as those of  Theorem \ref{thmcondgp}. Then the union of the parameter spaces determined by  all possible  partitions
\begin{equation}\bm(n,m)=\bigcup_{\mathcal{S}}\bm(n,m,\mathcal{S})\end{equation}
  is a parameter space of the configuration space $\mathcal{M}(n,m;\bp(V_+))$ when $V$  is not  parabolic.

\medskip

Therefore, the moduli space can be described as follows.
\begin{thm}\label{paramodthmr} The moduli space of  $\p=(\p_{1},\cdots, \p_{m})$  given by Proposition \ref{regula-case} can be identified with the set \begin{equation}\bm(n,m)=\bigcup_{\mathcal{S}}\bm(n,m,\mathcal{S}).\end{equation}
\end{thm}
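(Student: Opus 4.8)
The plan is to assemble what Sections \ref{sec5v+} and \ref{sec7reg} already supply. The block-normalized algorithm turns an arbitrary lift of $\p$ into a Gram matrix $G_b(\p)$ whose only remaining indeterminacy is the residual action described by Lemma \ref{actsemip}, one copy of ${\rm Sp}(1)$ per sub-block $S_{ij}$; Theorem \ref{noparathm} then pins down a canonical matrix $G(\p D_bD_r)$ by running the rotation-normalized algorithm of Section \ref{submpv0} inside each sub-block and records its independent entries. I would first note that the partition datum $\mathcal S$ — the top-level decomposition of Proposition \ref{regula-case} together with the sub-partitions $S_i=\bigcup_j S_{ij}$ produced by the algorithm — is a genuine invariant of the $\pspn$-congruence class, because the diagonal action $\p\mapsto\q D$ merely rescales the entries of the Gram matrix and hence preserves its zero pattern (equation (\ref{isolift1})), while $f\in\spn$ leaves $G(\p)$ fixed altogether ((\ref{isomap})). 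Consequently the non-parabolic part of $\mathcal M(n,m;\bp(V_+))$ is the disjoint union, over all admissible $\mathcal S$, of the classes of partition type $\mathcal S$, and it suffices to identify each such piece with $\bm(n,m,\mathcal S)$.

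Next I would define $\Phi\colon[\p]\mapsto$ (vector of independent entries of $G(\p D_bD_r)$) and verify it lands in $\bm(n,m,\mathcal S)$: inside each diagonal sub-block the entries obey the $c_{ij}$-normalization constraints of Proposition \ref{struct-subgram} and, after rotation-normalization, lie in a stratum $\bm(n,\sigma_{ij})$ of the kind built in Section \ref{submpv0}; the off-diagonal sub-block entries are free quaternions, filling the factors $\mathbb C_i$, while orthogonality between distinct top-level blocks forces all remaining entries to vanish (cf.\ (\ref{partregu})); and the signature of the full matrix is controlled by Theorem \ref{thm-inertia}, which is exactly the constraint imposed on $\bm(n,m,\mathcal S)$. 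Well-definedness of $\Phi$ is the crux and I would argue it as in Theorem \ref{thmparam}: two lifts, or two intermediate block-normalized matrices, differ by the residual $\prod_{i,j}{\rm Sp}(1)$-action, and the rotation-normalized algorithm (Lemma \ref{lemtonorm}, formulae (\ref{findnu}), (\ref{findmu})) selects a unique point in each orbit, so $\Phi([\p])$ depends only on $[\p]$.

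For bijectivity I would argue in both directions through the Gram matrix. Surjectivity: given $v\in\bm(n,m,\mathcal S)$, assemble the Hermitian matrix $G$ with those entries; by construction it meets the constraints defining $\bm(n,m,\mathcal S)$, in particular the inertia bounds (\ref{inercondpg}), so, just as in the proof of Proposition \ref{prop-condgn} (see also Theorem \ref{thmcondgp}), write $S^*GS=B$ with $B$ diagonal of the same signature, choose an $(n+1)\times m$ matrix $A$ with $A^*JA=B$, and set $\p=AS^{-1}$; then $G(\p)=G$, its zero pattern is precisely that encoded by $\mathcal S$, and the algorithm applied to $\p$ returns $v$. Injectivity: if $\Phi([\p])=\Phi([\q])$ then $\p$ and $\q$ share the partition $\mathcal S$ and, after normalization, have literally the same Gram matrix; since $V$ is not parabolic a Gram matrix determines its tuple up to $\pspn$ — complete the $k+1$ independent columns of each to a basis of $\bh^{n,1}$ realizing the common signature (the argument of Proposition \ref{mutuorth}) and compose the two extensions — whence $[\p]=[\q]$.

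Finally, the identification is a homeomorphism: $\bm(n,m,\mathcal S)$ carries the subspace topology from the ambient quaternionic matrix space, and both $\Phi$ and $\Phi^{-1}$ are continuous because every ingredient — the $1$- and $c_{ij}$-normalizing diagonal matrices of (\ref{d-1}), the formulae (\ref{findnu}), (\ref{findmu}), and the reconstruction $\p=AS^{-1}$ — is a continuous (indeed rational) expression on the relevant stratum, exactly as in Theorem \ref{mainthm}. The step I expect to fight with is not any isolated computation but a clean proof that the rotation-normalized coordinate is unambiguous in the presence of the two-sided $\prod_{i,j}{\rm Sp}(1)$-action on the cross-sub-block entries: since the diagonal sub-block $G_b(\p_{S_{ij}})$ alone need not pin $\mu_{ij}$ (it fails when ${\rm Card}(S_{ij})\le 2$, or when the relevant imaginary parts are collinear), one must run the $P(n,m)/Z(n,m)$-type case analysis of Section \ref{submpv0} on the enlarged collection of entries acted on by each $\mu_{ij}$ and confirm that this still cuts out a single orbit — which is the real content hiding behind the word ``conceptual'' in the statement.
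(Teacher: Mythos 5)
Your proposal follows essentially the same route as the paper: the paper offers no separate proof of this theorem, presenting it as a summary of the block-normalized algorithm, Theorem \ref{noparathm}, and the ``conceptual'' construction of $\bm(n,m,\mathcal S)$, and your argument fills in exactly those steps (invariance of the partition type under the diagonal action, realizability via $\p=AS^{-1}$ as in Proposition \ref{prop-condgn}, and injectivity from the fact that a non-parabolic Gram matrix determines the tuple up to $\pspn$). The one issue you flag --- that the residual $\prod_{i,j}{\rm Sp}(1)$ acts two-sidedly, not by conjugation, on the entries linking distinct sub-blocks $S_{ij}$, $S_{ij'}$ of a common $S_i$, so that rotation-normalizing each diagonal sub-block alone need not produce a well-defined coordinate (e.g.\ when ${\rm Card}(S_{ij})\le 2$ the sub-block leaves $\mu_{ij}$ entirely free) --- is a genuine subtlety that the paper's own exposition does not resolve either; closing it requires first normalizing a nonzero linking entry between sub-blocks (which exists by Proposition \ref{struct-subgram}(4)) to reduce the two-sided action to a single conjugation before the algorithm of Section \ref{submpv0} applies.
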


\section{Quaternionic hyperbolic triangles}\label{sec8tri}
In this section, we will give  a parameter space of quaternionic hyperbolic triangles. This section may be regarded as an application of somewhat conceptual results in previous sections in   triangle groups, a current hot research topic since the seminal work of   Goldman and Parker \cite{goldpark92}.

 We will work on ball model and begin with some notations.  Let ${\p}_t$ be the normalized polar vector of the quaternionic line $l_t$, $t=1,2,3$.
That is  $l_t$ is  a quaternionic $1$-dimensional submanifold corresponding to ${\p}_t$ with $\langle {\p}_{t},{\p}_{t}\rangle=1$.
\begin{dfn}
A {\it quaternionic  hyperbolic triangle} is a triple $(l_1,l_2,l_3)$ of quaternionic lines in quaternionic hyperbolic space ${\rm H}_{\bh}^2$.
\end{dfn}
  For  pair of  quaternionic lines $l_{t-1}$ and $l_{t+1}$, let $r_t=|\langle {\p}_{t-1},{\p}_{t+1}\rangle|$, where the indices are taken mod 3.
 By Theorem \ref{thm-twpo},  the number $r_t<1$, $r_t=1$ and $r_t>1$ means that  the quaternionic lines $l_{k-1}$ and $l_{k+1}$ intersect at ${\rm H}_{\bh}^2$ with  angle $\varphi_t=\arccos r_t$,  intersect at $\partial {\rm H}_{\bh}^2$, are ultra-parallel with the distance $\ell_t=2\cosh^{-1} r_t$, respectively.

We define that the following quaternion for a triple of points  ${\p}= ({\p}_{1},{\p}_{2},{\p}_{3})$ in $V_+$: $$\langle {\p}_{1},{\p}_{2},{\p}_{3} \rangle=:\langle {\p}_{2},{\p}_{1}\rangle \langle {\p}_{3},{\p}_{2}\rangle \langle {\p}_{1},{\p}_{3}\rangle. $$

 \begin{dfn}\label{auglar}
The angular invariant of the  quaternionic  hyperbolic triangle  $(l_1,l_2,l_3)$ is defined by
$$\ba({\p})=\ba({\p}_{1},{\p}_{2},{\p}_{3})=\left\{
               \begin{array}{ll}
                 \arccos \frac{\Re( \langle {\p}_{1},{\p}_{2},{\p}_{3} \rangle)}{| \langle {\p}_{1},{\p}_{2},{\p}_{3} \rangle|}, & \hbox{provided}\   \langle {\p}_{1},{\p}_{2},{\p}_{3} \rangle\neq 0;\\
                 \pi/2, & \hbox{otherwise.}
               \end{array}
             \right.$$
 \end{dfn}
 When $\ba({\p})=\pi/2$, we may have two cases:  $\langle {\p}_{1},{\p}_{2},{\p}_{3} \rangle=0$ or $\Re(\langle {\p}_{1},{\p}_{2},{\p}_{3} \rangle)=0$. For example, $\langle {\p}_{1},{\p}_{2},{\p}_{3} \rangle={\bf i}$ when  $\p_1=(0,1,0)^T, \p_2=(1,1,1)^T, \p_3=({\bf i},1,1)^T$.

 It is obvious  that  $$\ba({\p})=\ba({\p}_{1},{\p}_{2},{\p}_{3})= \ba(f{\p}_{1},f{\p}_{2},f{\p}_{3})\in [0,\pi], \forall f\in {\rm Sp}(2,1). $$
 It is easy to verify the following proposition.
 \begin{prop} Let ${\p}_{1},{\p}_{2},{\p}_{3}\in V_+$, let $\sigma$ be  a permutation of $1,2,3$, let $\lambda_i\in \bh- \{0\}$. Then
$$\ba({\p}_{1},{\p}_{2},{\p}_{3})=\ba({\p}_{\sigma(1)},{\p}_{\sigma(2)},{\p}_{\sigma(3)})=\ba({\p}_{1}\lambda_1,{\p}_{2}\lambda_2,{\p}_{3}\lambda_3).$$
 \end{prop}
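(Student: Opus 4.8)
The plan is to check the two invariances separately, using only the definition of $\ba$ and the behaviour of the Hermitian triple product $\langle {\p}_1,{\p}_2,{\p}_3\rangle = \langle {\p}_2,{\p}_1\rangle\langle {\p}_3,{\p}_2\rangle\langle {\p}_1,{\p}_3\rangle$ under the two types of operation. Since $\ba$ depends on $\langle {\p}_1,{\p}_2,{\p}_3\rangle$ only through the quantity $\Re(q)/|q|$ (with the convention $\ba=\pi/2$ when $q=0$), it suffices to show that each operation either fixes $q$, or replaces $q$ by $\lambda q\bar\lambda$ for some nonzero quaternion $\lambda$ (which changes neither $|q|$ nor $\Re(q)$, since conjugation by $\lambda/|\lambda|$ is a rotation of $\Im(\bh)$ fixing $\Re$), or replaces $q$ by $\bar q$ (same effect on $\Re$ and $|\cdot|$), and in each case also preserves the vanishing of $q$.

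First I would treat the scaling ${\p}_i\mapsto{\p}_i\lambda_i$. Using sesquilinearity of $\langle\cdot,\cdot\rangle$ — conjugate-linear in the first slot, linear in the second, so that $\langle {\p}_i\lambda_i,{\p}_j\lambda_j\rangle=\overline{\lambda_i}\langle {\p}_i,{\p}_j\rangle\lambda_j$ — one computes
$$\langle {\p}_1\lambda_1,{\p}_2\lambda_2,{\p}_3\lambda_3\rangle = \overline{\lambda_2}\langle {\p}_2,{\p}_1\rangle\lambda_1\cdot\overline{\lambda_3}\langle {\p}_3,{\p}_2\rangle\lambda_2\cdot\overline{\lambda_1}\langle {\p}_1,{\p}_3\rangle\lambda_3.$$
Here one must be careful with noncommutativity: the factors $\lambda_1\overline{\lambda_3}$, $\lambda_2\overline{\lambda_1}$, $\lambda_3\overline{\lambda_2}$ do not in general drop out. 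This is the step I expect to need the most care. The clean way around it is to observe that if only one $\lambda_i$ is nontrivial, say ${\p}_1\mapsto{\p}_1\lambda$ and the others unchanged, then $q\mapsto \langle {\p}_2,{\p}_1\rangle\lambda\langle {\p}_3,{\p}_2\rangle\overline{\lambda}\langle {\p}_1,{\p}_3\rangle$; this is not literally $\lambda q\overline\lambda$, but its real part and modulus still coincide with those of $q$ because $|\lambda a\overline\lambda b c|=|\lambda|^2|abc|$ divided out, and — more to the point — $\Re$ and $|\cdot|$ of a product of quaternions are unchanged when one inserts a conjugate pair $\lambda,\overline\lambda$ that "rotates" one factor, since $\langle {\p}_2,{\p}_1\rangle(\lambda\langle {\p}_3,{\p}_2\rangle\overline{\lambda}/|\lambda|^2)\langle {\p}_1,{\p}_3\rangle$ has the same real part and modulus as $\langle {\p}_2,{\p}_1\rangle\langle {\p}_3,{\p}_2\rangle\langle {\p}_1,{\p}_3\rangle$ by Proposition~\ref{so3} applied to $\mu=\lambda/|\lambda|$. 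Iterating over the three indices then handles a general $(\lambda_1,\lambda_2,\lambda_3)$, and the case $q=0$ is immediate since scaling by nonzero quaternions cannot create or destroy vanishing of a product of (individually nonzero or zero) Hermitian pairings.

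Next I would treat the permutation ${\p}\mapsto\sigma({\p})$. Since $\mathcal{S}_3$ is generated by the transposition $(1\,2)$ and the $3$-cycle $(1\,2\,3)$, it suffices to check those two. For the cyclic shift, $\langle {\p}_2,{\p}_3,{\p}_1\rangle=\langle {\p}_3,{\p}_2\rangle\langle {\p}_1,{\p}_3\rangle\langle {\p}_2,{\p}_1\rangle$, which is a cyclic permutation of the three factors of $q$; a cyclic permutation of a product $abc\mapsto bca = a^{-1}(abc)a$ is conjugation by $a=\langle {\p}_2,{\p}_1\rangle$ (when $a\neq 0$), hence preserves $\Re$ and $|\cdot|$, and trivially preserves vanishing. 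For the transposition $(1\,2)$, using the Hermitian symmetry $\langle {\p}_j,{\p}_i\rangle=\overline{\langle {\p}_i,{\p}_j\rangle}$ one finds $\langle {\p}_2,{\p}_1,{\p}_3\rangle=\langle {\p}_1,{\p}_2\rangle\langle {\p}_3,{\p}_1\rangle\langle {\p}_2,{\p}_3\rangle=\overline{\langle {\p}_2,{\p}_1\rangle}\,\overline{\langle {\p}_1,{\p}_3\rangle}\,\overline{\langle {\p}_3,{\p}_2\rangle}=\overline{\langle {\p}_3,{\p}_2\rangle\langle {\p}_1,{\p}_3\rangle\langle {\p}_2,{\p}_1\rangle}$, which is the conjugate of a cyclic rearrangement of $q$; since conjugation and cyclic rearrangement each preserve $\Re$ and $|\cdot|$ and the property of being zero, so does their composite. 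Combining the two cases gives invariance of $\ba$ under all of $\mathcal{S}_3$, and together with the scaling invariance this proves the proposition. The only genuinely delicate point, as noted, is bookkeeping the noncommutative factors in the scaling computation, which is why I would reduce it to one index at a time and invoke Proposition~\ref{so3}.
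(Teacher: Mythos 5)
Your treatment of the permutations is fine: cyclic invariance of the real part of a quaternion product, together with $\overline{xyz}=\bar z\,\bar y\,\bar x$, handles the $3$-cycle and the transposition, and vanishing of the triple product is clearly preserved. The problem is in the scaling step, and it is a genuine error rather than a bookkeeping nuisance. First, you have the sesquilinearity backwards for this paper: here $\langle\z,\w\rangle=\w^*J\z$, so $\langle\z\alpha,\w\beta\rangle=\bar\beta\,\langle\z,\w\rangle\,\alpha$ (the conjugated scalar of the \emph{second} argument appears on the left, the scalar of the \emph{first} on the right); this is the convention used throughout the paper, e.g.\ where $\bar\lambda_1\bar\lambda_3\langle\p_2,\p_3\rangle\lambda_2\lambda_1$ appears in the normalization arguments. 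Second, and more seriously, the principle you invoke to repair the resulting non-cancellation --- that $\Re\bigl(a(\lambda b\bar\lambda)c\bigr)=\Re(abc)$ for a unit quaternion $\lambda$, justified by the $\mathrm{SO}(3)$ rotation property of conjugation --- is false. That property says $\lambda b\bar\lambda$ has the same real part and modulus as $b$, but the real part of a product is not determined by the real parts and moduli of its factors. Concretely, take $a=\bi$, $b=\bi$, $c=1$ and $\lambda=(1+\bk)/\sqrt{2}$, so that $\lambda b\bar\lambda=\bj$: then $\Re(abc)=\Re(-1)=-1$ while $\Re(a\lambda b\bar\lambda c)=\Re(\bi\bj)=0$. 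So the ``one index at a time'' reduction does not go through as written.

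The correct computation is in fact cleaner than you anticipated. With the paper's convention,
$$\langle\p_1\lambda_1,\p_2\lambda_2,\p_3\lambda_3\rangle=\bigl(\bar\lambda_1\langle\p_2,\p_1\rangle\lambda_2\bigr)\bigl(\bar\lambda_2\langle\p_3,\p_2\rangle\lambda_3\bigr)\bigl(\bar\lambda_3\langle\p_1,\p_3\rangle\lambda_1\bigr)=|\lambda_2|^2|\lambda_3|^2\,\bar\lambda_1\,\langle\p_1,\p_2,\p_3\rangle\,\lambda_1,$$
because the interior pairs $\lambda_2\bar\lambda_2$ and $\lambda_3\bar\lambda_3$ are real, hence central. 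The result is a positive real multiple of $\bar\lambda_1 q\lambda_1$ with $q=\langle\p_1,\p_2,\p_3\rangle$, and since $\Re(\bar\lambda_1 q\lambda_1)=|\lambda_1|^2\Re(q)$ and $|\bar\lambda_1 q\lambda_1|=|\lambda_1|^2|q|$, both the ratio $\Re(q)/|q|$ and the vanishing of $q$ are preserved. With this replacement your overall plan (checking scalings and the two generating permutations separately) is sound; the paper states the proposition without proof, and this is presumably the intended verification.
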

In this specific case, as in the $1$-normalized process, we have the following proposition.
\begin{prop}\label{normgram}
Let ${\p}= ({\p}_{1},{\p}_{2},{\p}_{3})$ be a triple of points in  $V_+$.  Then the equivalence class of Gram matrices associated to ${\p}$  contains a
unique matrix $G= (g_{ij})$ with $g_{ii} = 1$, $g_{12}=r_1\geq 0$,  $g_{13}=r_2\geq 0$ and $g_{23}=r_1(\cos\ba+\sin\ba{\bf i}) \in \bc$, where $\sin\ba\ge 0$.
\end{prop}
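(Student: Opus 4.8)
The plan is to split the statement into an existence part and a uniqueness part, carrying out uniqueness in detail in the generic situation $g_{12}>0,\ g_{13}>0$ and flagging what happens when one of these vanishes.

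For existence I would run the $1$-normalized process of Proposition \ref{normprocess1} for $m=3$: from an arbitrary lift, rescale each $\p_i$ by a positive real so that $\langle\p_i,\p_i\rangle=1$, then multiply $\p_2$ and $\p_3$ by unit quaternions so that $g_{12}=\langle\p_2,\p_1\rangle\ge 0$ and $g_{13}=\langle\p_3,\p_1\rangle\ge 0$ (leaving them alone if the inner product is $0$). At this point the class still carries a common factor: replacing $(\p_1,\p_2,\p_3)$ by $(\p_1\mu,\p_2\mu,\p_3\mu)$, $\mu\in{\rm Sp}(1)$, keeps $g_{12},g_{13}\ge 0$ (reals commute and $|\mu|=1$) and sends $g_{23}=\langle\p_3,\p_2\rangle$ to $\bar\mu g_{23}\mu$. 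By Proposition \ref{so3} (equivalently Lemma \ref{lemtonorm}), conjugation by ${\rm Sp}(1)/\{\pm1\}$ realizes ${\rm SO}(3)$ on $\Im(\bh)\cong\br^3$ and fixes real parts, so $\mu$ can be chosen to rotate $\Im(g_{23})$ onto $\br_{\ge 0}\bi$; then $g_{23}\in\bc$ with non-negative imaginary part. This produces a representative of the asserted form.

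For uniqueness, suppose $G=(g_{ij})$ and $\widetilde G=(\widetilde g_{ij})$ both have the asserted form with $\widetilde G=D^*GD$, $D={\rm diag}(\lambda_1,\lambda_2,\lambda_3)$, $\lambda_i\neq 0$. Diagonal entries give $|\lambda_i|^2=\widetilde g_{ii}/g_{ii}=1$, so $\lambda_i\in{\rm Sp}(1)$. Assume $g_{12},g_{13}>0$. From $\widetilde g_{12}=\bar\lambda_1 g_{12}\lambda_2=g_{12}\,\bar\lambda_1\lambda_2\in\br_{\ge 0}$ and $g_{12}>0$ we get $\bar\lambda_1\lambda_2\in\br_{>0}$, and $|\bar\lambda_1\lambda_2|=1$ forces $\lambda_2=\lambda_1$; likewise $\lambda_3=\lambda_1$. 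Writing $\mu:=\lambda_1$, we have $\widetilde g_{23}=\bar\mu g_{23}\mu$ with $g_{23},\widetilde g_{23}\in\bc$ of non-negative imaginary part. If $\Im(g_{23})=0$ then $\widetilde g_{23}=\bar\mu g_{23}\mu=g_{23}$ since $g_{23}$ is real. If $\Im(g_{23})>0$, write $g_{23}=c+d\bi$ with $d>0$; then $\bar\mu g_{23}\mu=c+d\,\bar\mu\bi\mu$, so $\bar\mu\bi\mu=(\widetilde g_{23}-c)/d\in\bc$, and being a unit purely imaginary quaternion it equals $\pm\bi$; non-negativity of $\Im(\widetilde g_{23})=\pm d$ forces the $+$ sign, i.e.\ $\mu$ commutes with $\bi$, i.e.\ $\mu\in\bc$, whence $\widetilde g_{23}=g_{23}$. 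Thus $G=\widetilde G$. For the identification of the invariant, note that for the normalized $G$ one has $\langle\p_1,\p_2,\p_3\rangle=\langle\p_2,\p_1\rangle\langle\p_3,\p_2\rangle\langle\p_1,\p_3\rangle=g_{12}\,g_{23}\,\overline{g_{13}}=(g_{12}g_{13})\,g_{23}$, a non-negative real multiple of $g_{23}$, so by Definition \ref{auglar} $\cos\ba(\p)=\Re(g_{23})/|g_{23}|$, which is exactly the $\ba$ appearing in the statement (and $r_1=|g_{23}|$).

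The step I expect to be the real obstacle — and where the statement needs a caveat — is uniqueness when $g_{12}=0$ or $g_{13}=0$, i.e.\ when one quaternionic line is orthogonal to another. There the deduction $\lambda_2=\lambda_1$ (resp.\ $\lambda_3=\lambda_1$) collapses and a free ${\rm Sp}(1)$-factor survives, acting on $g_{23}=\langle\p_3,\p_2\rangle$ by two-sided multiplication and sweeping out the whole sphere $\{q:|q|=|g_{23}|\}$, so rigidity is lost. I would handle this either by restricting to non-degenerate triangles, or by adjoining to the normalization an explicit convention at the orthogonal index (for instance using the residual freedom there to force $g_{23}\in\br_{\ge 0}$), and I would state that convention rather than leave it implicit.
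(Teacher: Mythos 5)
Your existence argument is the same as the paper's: normalize the diagonal, use $\lambda_i=\langle\p_1,\p_i\rangle/|\langle\p_1,\p_i\rangle|$ (or $1$ when the product vanishes) to make $g_{12},g_{13}\ge 0$, then multiply all three lifts by a common unit quaternion $\lambda_1$ to rotate $\Im(g_{23})$ onto $\br_{\ge 0}\,\bi$. Where you genuinely go beyond the paper is uniqueness: the paper's proof of Proposition \ref{normgram} stops after constructing one representative and never verifies that it is the only matrix of the stated form in the equivalence class, even though uniqueness is asserted. Your argument ($|\lambda_i|=1$ from the diagonal entries, $\lambda_2=\lambda_1=\lambda_3$ from positivity of $g_{12}$ and $g_{13}$, then $\bar\mu\bi\mu=\bi$ forcing $\mu\in\bc$ and hence $\widetilde g_{23}=g_{23}$) is correct and supplies exactly the missing half. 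Your caveat about the orthogonal case is also a genuine observation rather than excessive caution: when $g_{12}=0$ or $g_{13}=0$ the residual diagonal freedom moves $g_{23}$ over the whole sphere of radius $|g_{23}|$, so the matrices with $g_{23}=|g_{23}|e^{\bi\ba}$ and $g_{23}=|g_{23}|e^{\bi\ba'}$ for distinct $\ba,\ba'\in[0,\pi]$ are equivalent and both satisfy the stated conditions, and literal uniqueness fails. The tie-breaking convention you propose is in effect already available in the paper --- Definition \ref{auglar} sets $\ba=\pi/2$ when $\langle\p_1,\p_2,\p_3\rangle=0$, which pins $g_{23}$ to $|g_{23}|\,\bi$ in exactly this degenerate situation --- but neither the statement nor the proof of Proposition \ref{normgram} invokes it, so your suggestion to state the convention explicitly is the right repair.
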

\begin{proof} By appropriate rescaling we may assume that ${\p}_{i}$ are normalized vectors, i.e., $g_{ii} = 1, i=1,2,3$.
For $i=2,3$, let
 $$\lambda_i=\left\{
               \begin{array}{ll}
                 \frac{\langle {\p}_{1},{\p}_{i} \rangle}{|\langle {\p}_{1},{\p}_{i}\rangle|}, & \hbox{provided}\   \langle{\p}_{1},{\p}_{i}\rangle\neq 0;\\
                 1, & \hbox{otherwise.}
               \end{array}
             \right.$$
It is known that there exists a $\lambda_1$ of norm 1 such that  $\bar{\lambda_1}\bar{\lambda_3}\langle {\p}_{2},{\p}_{3}\rangle \lambda_2\lambda_1$ is a complex number with no-negative imaginary part.  Then  $G({\p}_{1}\lambda_1,{\p}_{2}\lambda_2\lambda_1,{\p}_{3}\lambda_3\lambda_1)$ is the desired Gram matrix.
\end{proof}

The Gram  matrix in Proposition \ref{normgram} is called the normalized Gram matrix, which is of the form
 \begin{equation}\label{normmatrix}G=(g_{ij})=\left(
                  \begin{array}{ccc}
                    1 & r_3 & r_2 \\
                    r_3 & 1 & r_1e^{{\bf i}\ba} \\
                    r_2 & r_1e^{-{\bf i}\ba} & 1\\
                  \end{array}
                \right).\end{equation}
We call such a quaternionic hyperbolic  triangle a  $\left(r_1,r_2,r_3;\ba\right)$-triangle.

Let $G(i,j)$ be the submatrix consisting of entries in row and column index with $i,j$.  It is easy to verify the following proposition.
\begin{prop}
Let $G$ be the normalized matrix of  $(r_1,r_2,r_3;\ba)$ triangle.
	\begin{itemize}
	\item[(1)] Since $i(G)\neq (3,0,0)$, $r_1^2+r_2^2+r_3^2\neq 0$.
\item[(2)]
\begin{equation}
\det G(1,2)=1-r_3^2, \ \det G(1,3)=1-r_2^2,\  \det G(2,3)=1-r_1^2
\end{equation}
and
\begin{equation}
\det G=1-(r_1^2+r_2^2+r_3^2)+2r_1r_2r_3\cos\ba.
\end{equation}
	\end{itemize}
\end{prop}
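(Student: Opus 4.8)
The statement is a collection of elementary determinant computations for the $3\times 3$ Hermitian matrix $G$ in \eqref{normmatrix}, so the plan is simply to verify each item directly, using only the definition of determinant for a $3\times 3$ quaternionic Hermitian matrix (whose determinant is real, since $G=G^*$). First, for item (1): by Theorem \ref{thm-inertia}, since ${\p}_1,{\p}_2,{\p}_3\in V_+$, the only possible signatures are $(k,1,3-k-1)$, $(k+1,0,3-k-1)$ or $(k,0,3-k-1)$ with $k+1=\dim_q V$; in every case either $n_-=1>0$ or $n_+<3$, so $i(G)\neq(3,0,0)$. If one had $r_1=r_2=r_3=0$, then $G=I_3$, which has signature $(3,0,0)$ — contradiction. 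Hence $r_1^2+r_2^2+r_3^2\neq 0$.

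For item (2), I would expand along the natural lines. The three $2\times 2$ minors are immediate: $\det G(1,2)=1\cdot 1-r_3\cdot r_3=1-r_3^2$, and similarly $\det G(1,3)=1-r_2^2$ (using $g_{13}=g_{31}=r_2$) and $\det G(2,3)=1-r_1e^{{\bf i}\ba}\cdot r_1e^{-{\bf i}\ba}=1-r_1^2$, the last because $e^{{\bf i}\ba}e^{-{\bf i}\ba}=1$. For the full determinant I would use cofactor expansion along the first row, being careful with the order of quaternionic products (here all entries of the first row are real, so no ordering subtlety arises in that expansion):
\[
\det G = 1\cdot\det G(2,3) - r_3\cdot\bigl(r_3\cdot 1 - r_1e^{{\bf i}\ba}\cdot r_2\bigr) + r_2\cdot\bigl(r_3\cdot r_1e^{-{\bf i}\ba} - 1\cdot r_2\bigr).
\]
Substituting $\det G(2,3)=1-r_1^2$ and collecting terms gives
\[
\det G = 1 - r_1^2 - r_3^2 + r_2r_3r_1e^{{\bf i}\ba} + r_2r_3r_1e^{-{\bf i}\ba} - r_2^2
= 1-(r_1^2+r_2^2+r_3^2) + r_1r_2r_3\bigl(e^{{\bf i}\ba}+e^{-{\bf i}\ba}\bigr),
\]
and since $e^{{\bf i}\ba}+e^{-{\bf i}\ba}=2\cos\ba$ this is exactly $1-(r_1^2+r_2^2+r_3^2)+2r_1r_2r_3\cos\ba$.

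The only point deserving care — and the one I would flag as the main (minor) obstacle — is the legitimacy of expanding a quaternionic determinant by cofactors and of manipulating the scalar products as if they commute. This is harmless here precisely because the first row of $G$ is real, so the cofactor expansion along that row involves only multiplication of real numbers by $2\times 2$ minors, and the $2\times 2$ minors of a Hermitian matrix that appear are themselves real (the product $r_1e^{{\bf i}\ba}\cdot r_1e^{-{\bf i}\ba}$ collapses to $r_1^2$). Thus no appeal to a general theory of quaternionic determinants is needed; one works with the explicit formula. I would state this observation briefly and then present the two displayed computations above as the proof.
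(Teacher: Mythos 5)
Your computations are correct, and the paper itself offers no proof of this proposition --- it is introduced with ``It is easy to verify the following proposition,'' so your direct expansion is exactly the intended (omitted) argument. The $2\times 2$ minors and the cofactor expansion along the real first row all check out, and your remark that no quaternionic-determinant subtleties arise here (since $e^{\pm{\bf i}\ba}$ lie in $\bc$ and all the relevant products are real or conjugate pairs) is the right thing to flag.

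One small imprecision in item (1): from the three signature types of Theorem \ref{thm-inertia} alone, the elliptic case with $\dim_q V=3$ \emph{would} give $i(G)=(k+1,0,0)=(3,0,0)$, so your claim that ``in every case either $n_-=1$ or $n_+<3$'' does not follow from that list by itself. What rules it out is the additional constraint $n_+\le n$ from the same theorem: since Section \ref{sec8tri} works in ${\bf H}_{\bh}^2$, i.e.\ in $\bh^{2,1}$, one has $n_+\le 2<3$ (equivalently, $\bh^{2,1}$ cannot contain three mutually orthogonal positive vectors). With that one-line addition, the contrapositive argument $r_1=r_2=r_3=0\Rightarrow G=I_3\Rightarrow i(G)=(3,0,0)$ completes item (1) correctly.
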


Let $V={\rm span}\{\p_1,\p_2, \p_3\}$. We assume that $\p_1,\p_2, \p_3$ are pairwise distinct points in $\bp(V_+)$, therefore $\dim_q(V) \ge 2$.  We mention that beginning  with three points in $\overline{{\rm H}_{\bh}^2}$ and then constructing the quaternionic lines,
 one may obtain that the corresponding polar vectors  $\p_1,\p_2, \p_3$  which may be the same  in the view point of $\bp(V_+)$.
 In this situation the three points lie in the closure of a common quaternionic line and  $\dim_q(V)=1$,
 however this case is not so interesting \cite{caohuang16}.
\begin{prop}\label{tricase}
 We enumerate the possibilities of the signatures  $i(G)$  corresponding to  quaternionic  hyperbolic triangle groups.
\begin{itemize}
	\item[(1)] If  $V$ is parabolic then $i(G)=(1,0,0)$,  $\dim_q(V)=2$ and it corresponds to $(1,1,1;0)$-triangle.
\item[(2)]If  $V$ is elliptic then $i(G)=(2,0,0), \dim_q(V)=2$.
\item[(3)]If $V$ is hyperbolic then $i(G)=(1,1,0), \dim_q(V)=2$ or $(2,1,0), \dim_q(V)=3$.
\end{itemize}
\end{prop}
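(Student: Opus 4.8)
The plan is to read the trichotomy directly off Theorem~\ref{thm-inertia}(2) applied to the positive triple $\p=(\p_1,\p_2,\p_3)$, and then to pin down $\dim_q V$ using that the ambient space $\bh^{2,1}$ has quaternionic dimension $3$. Since $p_1,p_2,p_3$ are pairwise distinct points of $\bp(V_+)$ we already know $2\le\dim_q V\le 3$, so in the notation $\dim_q V=k+1$ of Theorem~\ref{thm-inertia} only $k=1$ and $k=2$ can occur, and in each case the value of $i(G(\p))$ is forced by $k$ together with the trichotomy of Theorem~\ref{thm-inertia}(2).

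First I would dispose of the case $\dim_q V=3$. Then $\p$ is an invertible $3\times3$ matrix, so $G(\p)=\p^{*}J\p$ is congruent to $J$ and hence $i(G(\p))=(n,1,0)=(2,1,0)$; by Theorem~\ref{thm-inertia}(2) the only possibility compatible with $n_+\le n=2$ is the hyperbolic one, which gives the second alternative of~(3). The same remark rules out $\dim_q V=3$ in the other two cases: the elliptic case would demand $n_+=k+1=3>n$, and in the parabolic case Proposition~\ref{deg-case} puts $V$ inside $\z_0^{\perp}$ for a common null fibre $\z_0$, whereas $\dim_q\z_0^{\perp}=n=2$ by Proposition~\ref{orthcomp}(ii).

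Next I would treat $\dim_q V=2$, i.e.\ $k=1$. Then Theorem~\ref{thm-inertia}(2) yields exactly $(n_+,n_-)=(1,1)$ in the hyperbolic case (the first alternative of~(3)), $(2,0)$ in the elliptic case~(2), and $(1,0)$ in the parabolic case~(1), and $n_0=3-n_+-n_-$ completes the signature in each. It remains to recognise the parabolic configuration as a $(1,1,1;0)$-triangle. Since $k=n_+=1$, the partition of $S(3)$ provided by Proposition~\ref{deg-case} reduces to the single block $S_1=\{1,2,3\}$, inside which every Hermitian product equals $1$ after normalisation; hence the normalised Gram matrix~(\ref{normmatrix}) is the all-ones matrix, which by inspection of~(\ref{normmatrix}) means $r_1=r_2=r_3=1$ and $e^{{\bf i}\ba}=1$, i.e.\ $\ba=0$. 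To see that both alternatives of~(3) are actually attained one exhibits, on the one hand, three distinct positive vectors spanning a hyperbolic $2$-plane and, on the other, three linearly independent polar vectors of pairwise ultraparallel quaternionic lines (Theorem~\ref{thm-twpo}).

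The one step requiring care is the parabolic case: one must use that the \emph{whole} span $V$, not merely a pair of the $\p_i$, is orthogonal to the common null fibre $\z_0$, so that the bound $\dim_q V\le\dim_q\z_0^{\perp}=2$ genuinely applies, and that the block decomposition of Proposition~\ref{deg-case} collapses to a single block. Both are already built into Proposition~\ref{deg-case}, so once that proposition and the signature constraints of Theorem~\ref{thm-inertia} are available, the remaining verifications are routine bookkeeping with~(\ref{normmatrix}).
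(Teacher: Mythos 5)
Your proposal is correct and follows exactly the route the paper intends: the paper states Proposition~\ref{tricase} without proof, as a direct corollary of Theorem~\ref{thm-inertia}(2) together with the bounds $2\le\dim_q V\le 3$ and the observations you cite from Propositions~\ref{orthcomp} and~\ref{deg-case}; your identification of the parabolic case with the $(1,1,1;0)$-triangle via the all-ones normalized Gram matrix is also the intended reading. One point worth noting: your (correct) bookkeeping $n_0=3-n_+-n_-$ yields signatures $(1,0,2)$, $(2,0,1)$, $(1,1,1)$ and $(2,1,0)$, whereas the proposition as printed lists $(1,0,0)$, $(2,0,0)$, $(1,1,0)$ and $(2,1,0)$ --- the third components in the paper's statement are inconsistent with $n_++n_-+n_0=m=3$ and appear to be typos, so your version is the one that should stand.
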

It follows from Proposition \ref{threep} that the  parameter space of $(1,1,1;0)$-triangle is $\bc-\{0,1\}$.
\medskip

By Proposition \ref{tricase} and the properties of determinant of complex matrices, we have the following result.
\begin{thm}\label{tri-para}
For any $\ba\in[0,\pi/2]$  there exists a  quaternionic  hyperbolic $(r_1,r_2,r_3;\ba)$-triangle  in ${\rm H}_{\bh}^2$  if and only if
\begin{equation}\label{condt}\det G=1-(r_1^2+r_2^2+r_3^2)+2r_1r_2r_3\cos\ba\leq 0.\end{equation}
Moreover,  $\det G=0$  if and only if  there exist  $\lambda_i,\in \bh$ with $\sum_{i=1}^3|\lambda_i|\neq 0$  such that ${\p}_{1}\lambda_1+{\p}_{2}\lambda_2+{\p}_{3}\lambda_3=0$.
\end{thm}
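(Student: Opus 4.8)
The plan is to translate the problem into a question about the inertia of a $3\times 3$ complex Hermitian matrix and then invoke the realizability criterion of Theorem~\ref{thmcondgp}. Fix $r_1,r_2,r_3\ge 0$ and $\ba\in[0,\pi/2]$ and let $G$ be the matrix displayed in~(\ref{normmatrix}) for these parameters. By Proposition~\ref{normgram} and the duality between $(n-1)$-submanifolds and polar vectors recalled in Section~\ref{sec4sub1}, a quaternionic hyperbolic $(r_1,r_2,r_3;\ba)$-triangle in ${\bf H}_{\bh}^2$ exists if and only if there is a triple $\p=(\p_1,\p_2,\p_3)$ of pairwise distinct points of $\bp(V_+)\subset\bh^{2,1}$ whose normalized Gram matrix equals $G$: each normalized positive vector $\p_t$ is the polar vector of the quaternionic line $l_{\p_t}$, distinctness of the three lines is exactly distinctness of the $\p_t$ in $\bp(V_+)$, and the parameters $r_t=|\langle\p_{t-1},\p_{t+1}\rangle|$ and $\ba$ (Definition~\ref{auglar}) are read off from the entries of $G$. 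Since every entry of $G$ lies in $\bc$, any diagonalizing congruence $S^{*}GS=D$ over $\bc$ with $D$ real diagonal is at the same time a quaternionic congruence, so the signature $i(G)=(n_+,n_-,n_0)$ coincides with the ordinary complex signature of $G$; Theorem~\ref{thmcondgp} (with $n=2$, $m=3$) then asserts that such a $\p$ exists if and only if $n_+\le 2$, $n_-\le 1$ and $1\le n_++n_-\le 3$.

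Next I would convert this inertia condition into a sign condition on $\det G$. Since $G$ is Hermitian with $g_{11}=g_{22}=g_{33}=1$, we have ${\rm tr}\,G=3>0$, so $n_+\ge 1$; hence, among the triples $(n_+,n_-,n_0)$ with $n_++n_-+n_0=3$, the only ones violating the hypotheses of Theorem~\ref{thmcondgp} are $(3,0,0)$ and $(1,2,0)$. As $\det G$ is the product of the three real eigenvalues of $G$, one has $\det G>0$ exactly for the signatures $(3,0,0)$ (three positive eigenvalues) and $(1,2,0)$ (one positive, two negative --- the all-negative case being excluded by ${\rm tr}\,G>0$), whereas $\det G\le 0$ occurs exactly for the admissible signatures $(2,1,0)$, $(2,0,1)$, $(1,1,1)$ and $(1,0,2)$. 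Thus the inertia condition is equivalent to $\det G\le 0$, and together with the identity $\det G=1-(r_1^2+r_2^2+r_3^2)+2r_1r_2r_3\cos\ba$ proved just above this gives the first assertion.

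For the ``moreover'' statement I would argue directly. Writing $\p=(\p_1,\p_2,\p_3)$ and $\lambda=(\lambda_1,\lambda_2,\lambda_3)^{T}$: if $\p_1\lambda_1+\p_2\lambda_2+\p_3\lambda_3=\p\lambda=0$ with $\lambda\ne 0$, then $G\lambda=\p^{*}J\p\lambda=0$, so $G$ is singular and $\det G=0$. Conversely, suppose $\p_1,\p_2,\p_3$ are $\bh$-linearly independent; then they form a basis of $\bh^{2,1}$, so $V={\rm span}\{\p_1,\p_2,\p_3\}=\bh^{2,1}$, and since the form $\langle\cdot,\cdot\rangle$ is non-degenerate ($J$ being invertible) the Gram matrix $G=G(\p)$ relative to this basis has trivial kernel: $G\lambda=0$ would give $\langle\p\lambda,\p_i\rangle=0$ for all $i$, hence $\langle\p\lambda,v\rangle=0$ for all $v\in\bh^{2,1}$, hence $\p\lambda=0$ and $\lambda=0$; thus $\det G\ne 0$. (Alternatively, $\bh$-independence forces $\dim_q(V)=3$, and Proposition~\ref{tricase} then forces $i(G)=(2,1,0)$, so $n_0=0$.) Hence $\det G=0$ is equivalent to the existence of a non-trivial $\bh$-linear relation among the $\p_t$.

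Almost all of this is bookkeeping on top of two results already in hand: Theorem~\ref{thmcondgp}, the realizability criterion for 1-normalized Gram matrices of positive points, and the polar-vector dictionary of Section~\ref{sec4sub1}. The only points that need a little attention are the remark that for a complex Hermitian matrix the quaternionic and complex signatures agree --- so that Theorem~\ref{thmcondgp} may be applied through ordinary linear algebra --- and the finite case analysis pairing each possible $3\times 3$ signature with the sign of $\det G$; I do not anticipate a real obstacle beyond getting this case analysis right.
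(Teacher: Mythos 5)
Your proposal is correct and follows essentially the same route as the paper, which justifies the theorem in one line by combining the signature enumeration of Proposition \ref{tricase} with the sign of the determinant, the realizability direction resting on Theorem \ref{thmcondgp}. Your version merely makes explicit the case analysis pairing each admissible $3\times 3$ signature with the sign of $\det G$ (using ${\rm tr}\,G=3>0$ to force $n_+\ge 1$) and the kernel argument for the ``moreover'' clause, both of which the paper leaves implicit.
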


We need to replace $\ba\in[0,\pi/2]$ with $\ba\in[-\pi,\pi]$  in  Theorem \ref{tri-para}  for  complex hyperbolic geometry.  We refer to \cite{pra05,sch02} etc.  for more details of the complex hyperbolic triangle groups. Cao and Huang \cite{caohuang16} have  addressed the discreteness of quaternionic ideal triangle groups, which corresponds to the quaternionic  hyperbolic triangle of the types $(1,1,1;\ba)$. We mention that the angular invariant $\ba$ given by   Definition \ref{auglar} is different from that of \cite{caohuang16}.
It is of  current interest to settle  the  problems of  faithful and discrete presentations both on complex and quaternionic hyperbolic geometries.

\vspace{3mm}

{\bf Acknowledgements}\quad  I am grateful to  John R. Parker and Ioannis D. Platis  for suggesting the problem to me, as well as many useful comments.   This work  was  supported by State  Scholarship  Council of China and  NSF of Guangdong Province (2015A030313644) and  completed when the author was an Academic Visitor in Durham University. He would like to thank  Department of Mathematical Sciences  for its hospitality.


\begin{thebibliography}{99}
	
	\bibitem{apakim07} Apanasov, B. N.,  Kim, I.: Cartan angular  invariant and  deformations of rank 1 symmetric spaces.   Sbornik Math.  {\bf 198}(2),
	147-169 (2007)
	\bibitem{bisgen09}   Bisi, C., Gentili, G.:  M\"obius transformations and the Poincare distance in the quaternionic setting.   Indiana Univ. Math. J. {\bf 58}, 2729-2764 (2009)
	\bibitem{bre90}  Brehm, U.:  The shape invariant of triangles and trigonometry in two-point homogeneous spaces.      Geom.  Dedicata    {\bf 33}, 59-76 (1990)
		\bibitem{bre98} Brehm, U., Et-Taoui, B.: Congruence criteria for finite subsets of complex projective and complex hyperbolic spaces.    Manuscr. Math. {\bf 96}(1), 81-95 (1998)
		
	
	\bibitem{caogd16} Cao, W. S.:  Congruence of points in quaternionic hyperbolic space.     Geom.  Dedicata   {\bf  180}, 203-228 (2016)
 \bibitem{caohuang16}  Cao, W. S., Huang, X. L.: A note on quaternionic hyperbolic ideal triangle groups.   Canad. Math. Bull.  {\bf 59}, 244-257(2016)

	\bibitem{caopems07}Cao, W. S.:  On the classification of four-dimensional	m\"obius transformations. 	Proc. Edinb.  Math. Soc.  {\bf  50}, 49-62  (2007)
		\bibitem{caopar04} Cao, W. S., Parker, J. R., Wang, X. T.: On the classification of quaternionic M\"obius
	transformations, Math. Proc. Camb. Phil. Soc. {\bf 137} , 349-361 (2004)
	
	\bibitem{cpshimizu} Cao, W. S., Parker, J. R.: 	Shimizu's lemma for quaternionic hyperbolic space,  http://maths.dur.ac.uk/~dma0jrp/img/Quat-Shimizu.pdf
	
	\bibitem{car32} Cartan, E.: Sur le groupe de la g$\acute{e}$om$\acute{e}$trie hypersph$\acute{e}$rique.   Comment. Math. Helv. {\bf 4}, 158-171 (1932)
	
	
	
	
	\bibitem{chegre74} Chen, S.S., Greenberg, L.:   Hyperbolic spaces,   Contributions
		to analysis.   Academic Press, New York.  49-87 (1974)
	
	\bibitem{cungus10} Cunha, H., Gusevskii, N.: On the moduli space of quadruples of points in the boundary of complex
	hyperbolic space.   Transform. Groups  {\bf 15}(2), 261-283 (2010)
	\bibitem{cungus12} Cunha, H., Gusevskii, N.:  The moduli space of points in the boundary
	of complex hyperbolic space.    J. Geom.  Anal.  {\bf 22}, 1-11 (2012)
	
	\bibitem{cungus12fr} Cunha, H.,  Dutenhefner,F., Gusevskii, N., Thebaldi, R. S.:  The moduli space of  complex geodesics in the complex hyperbolic plane.    J. Geom.  Anal.  {\bf 22}, 295-319 (2012)
	
	\bibitem{falpla08}  Falbel, E., Platis, I. D.:  The ${\rm PU}(2,1)$ confguration space of four points in $S^3$ and the
	cross-ratio variety.   Math. Ann. {\bf 340}(4), 935-962 (2008)
	
	\bibitem{fal09}  Falbel, E.:  A spherical CR structure on the complement of the fgure eight knot with
	discrete holonomy.   J. Differ. Geom. {\bf 79}(1), 69-110 (2008)
	
	\bibitem{gao} Gou, G. S.:  The moduli space of points in the boundary of 	quaternionic hyperbolic space, preprint.
	
	\bibitem{gol99} Goldman, W. M.:  Complex hyperbolic geometry.  In: Oxford Mathematical Monographs. Oxford Science
	Publications. The Clarendon Press, Oxford University Press, New York (1999)
	
	\bibitem{goldpark92} Goldman, W. M., Parker,  J. R.:  Complex hyperbolic ideal triangle groups.   J. Reine Agnew. Math.  {\bf 425}, 71-86 (1992)
	
	\bibitem{hasa00}Hakim, J., Sandler, H.:  Applications of Bruhat decompositions to complex hyperbolic geometry.   J. Geom. Anal.  {\bf 10}(3), 435-453  (2000)
	\bibitem{hasa0J} Hakim, J., Sandler, H.:  Standard position for objects in hyperbolic space.   J. Geom.  {\bf 68}, 100-113 (2000)
	\bibitem{hasa03}Hakim, J., Sandler, H.: The moduli space of n + 1 points in complex hyperbolic n-space.     Geom.  Dedicata  {\bf 97}, 3-15 (2003)
	
	
	\bibitem{kimpar03} Kim, I., Parker, J. R.: Geometry of quaternionic hyperbolic
	manifolds.    Math. Proc. Camb. Philos. Soc. {\bf 135}, 291-320
	(2003)
	
	\bibitem{korrei87} Kor\'anyi,  A.,  Reimann, H. M.:   The complex cross-ratio on the
	Heisenberg group.    Enseign. Math.  {\bf 33}, 291-300 (1987)
	
	
	
	\bibitem{park08}  Parker,  J. R., Platis, I. D.:  Complex hyperbolic Fenchel-Nielsen coordinates.   Topology
	{\bf 47}(2), 101-135 (2008)
	
	\bibitem{parplal09} Parker,  J. R., Platis, I. D.:  Global geometrical coordinates on Falbel's cross-ratio variety.
	  Canad. Math. Bull.  {\bf 52}, 285-294  (2009)
	\bibitem{park2010} Parker,  J. R.: Notes on Complex Hyperbolic Geometry, (2010)
	
	
\bibitem{pra05} Pratoussevitch, A.: Traces in complex hyperbolic triangle groups.     Geom.  Dedicata   {\bf 111}, 159-185 (2005)
\bibitem{sch02}Schwartz, R.E.: Complex hyperbolic triangle groups. In: Proceedings of the International Congress of
Mathematicians, vol. II, pp. 339-349. Higher Ed. Press, Beijing (2002)

\end{thebibliography}
\end{document}